\documentclass[12pt]{amsart}

\usepackage{graphics,graphicx} 
\usepackage[T1]{fontenc}

\usepackage[cp1250]{inputenc}

\usepackage[arrow, matrix, curve]{xy}
\usepackage{amsmath,amsthm,amssymb}
\usepackage{amscd}

% My new environments

\newenvironment{proof of}[1]{\noindent\emph{Proof of #1.}}{$\qquad \square$\par}

% operators

\newcommand{\Prim}{\textrm{Prim}\,}

\DeclareMathOperator{\dashind}{-Ind}

\DeclareMathOperator{\im}{im}
\DeclareMathOperator{\coker}{coker}
\DeclareMathOperator{\Aut}{Aut}

\DeclareMathOperator{\spane}{span}
\DeclareMathOperator{\clsp}{\overline{span}}

%% Special letters

\newcommand{\HH}{\mathcal H}

\newcommand{\K}{\mathcal K}

\newcommand{\X}{\widetilde X}
\newcommand{\XX}{\mathcal X}

\newcommand{\VV}{\mathcal V}

\newcommand{\LL}{\mathcal L}

\newcommand{\al}{\alpha}

\newcommand{\FF}{\mathcal F}

\newcommand{\OO}{\mathcal O}
\newcommand{\A}{ A}
\newcommand{\B}{\mathcal B}
\newcommand{\E}{\mathcal E}
\newcommand{\SA}{\widehat{ \A}}
\newcommand{\D}{\mathcal D}

\newcommand{\C}{\mathbb C}

\newcommand{\Z}{\mathbb Z}
\newcommand{\N}{\mathbb N}
\newcommand{\T}{\mathbb T}
\newcommand{\TT}{\mathcal T}

 \newtheorem{thm}{Theorem}[section]
 \newtheorem{cor}[thm]{Corollary}
 \newtheorem{lem}[thm]{Lemma}
 \newtheorem{prop}[thm]{Proposition}
 \theoremstyle{definition}
 \newtheorem{defn}[thm]{Definition}
 \theoremstyle{remark}
 \newtheorem{rem}[thm]{Remark}
 \newtheorem*{ex}{Example}
 \numberwithin{equation}{section}

%\biboptions{super}
\topmargin -0.15in \textwidth 6.2in \textheight 8.6in \headsep 0.5in \oddsidemargin 0.2in \evensidemargin 0.2in
\begin{document}
 \title[Interactions and graph algebras]{Crossed products  for interactions 
 \\ and graph algebras}

\author{B. K.  Kwa\'sniewski} 
 \email{bartoszk@math.uwb.edu.pl}
 \address{Institute of Mathematics, Polish Academy of Science,  ul. \'Sniadeckich 8, PL-00-956 Warszawa, Poland
 //
 Institute of Mathematics, University  of Bialystok,  ul. Akademicka 2, PL-15-267  Bialystok, Poland  
}
 
%\urladdr{http://math.uwb.edu.pl/~zaf/kwasniewski}

\begin{abstract}
We consider  Exel's interaction $(\VV,\HH)$ over a unital $C^*$-algebra $\A$, such that $\VV(A)$ and $\HH(A)$ are hereditary subalgebras of $A$. For the associated crossed product, we obtain a uniqueness theorem,  ideal lattice description, simplicity criterion and a version of Pimsner-Voiculescu exact sequence. These results cover the case of crossed products by endomorphisms with hereditary ranges and complemented kernels. As  model examples of  interactions not coming from endomorphisms we introduce and study in detail  interactions  arising from  finite graphs. 

The  interaction $(\VV,\HH)$ associated to a graph $E$ acts on the core $\FF_E$ of the graph algebra $C^*(E)$. By describing   a   partial homeomorphism of $\widehat{\FF_E}$ dual to $(\VV,\HH)$  we find the fundamental structure theorems for $C^*(E)$, such as Cuntz-Krieger uniqueness theorem,  as results concerning reversible  noncommutative dynamics on $\FF_E$. We also provide a  new approach to calculation of $K$-theory of $C^*(E)$ using only an induced partial automorphism of $K_0(\FF_E)$ and the six-term exact sequence.

\end{abstract}
\keywords{interaction, graph algebra, endomorphism, topological freeness,  crossed product, Hilbert bimodule, $K$-theory, Pimsner-Voiculescu exact sequence}

\subjclass{Primary 46L55, Secondary 46L80}
   \thanks{This work was in part supported by Polish  National Science Centre  grant number  DEC-2011/01/D/ST1/04112. } 
   
\maketitle

\tableofcontents
\section{Introduction}  
In \cite{exel-inter} Exel  extended celebrated  Pimsner's construction \cite{p} of the (nowadays called) Cuntz-Pimsner algebras by introducing an intriguing  new concept of a generalized $C^*$-correspondence. The  leading example in \cite{exel-inter} arises from an \emph{interaction}  -  a  pair $(\VV,\HH)$ of positive linear maps on a $C^*$-algebra $A$ that  are mutual generalized inverses and such that the image of one map is in the multiplicative domain of the other. 
An interaction can  be considered  a `symmetrized'  generalization of a \emph{$C^*$-dynamical system}, i.e. a  pair $(\al,\LL)$ consisting of an endomorphism $\al:A\to A$ and its  transfer operator $\LL:A\to A$, \cite{exel2}. One can think of many  examples of   interactions naturally appearing in various problems, cf. \cite{exel3}, \cite{exel-renault}, \cite{exel4}. However, at present there is only one significant application of an interaction $(\VV,\HH)$ which is not  a $C^*$-dynamical system. Namely, in the recent paper \cite{exel4} Exel  showed that the  $C^*$-algebra $\OO_{n,m}$ introduced in \cite{Ara-Exel-Katsura}, 
is Morita equivalent to the crossed product $C^*(A,\VV,\HH)$ for an interaction $(\VV,\HH)$, over a commutative $C^*$-algebra $A$, where neither  $\VV$ nor $\HH$ is multiplicative.   Moreover,   for crossed products under consideration  general  structure theorems  known so far concern   only the case when the initial object is an injective endomorphism, cf. \cite{Paschke0}, \cite{Paschke1}, \cite{Szwajcar}, \cite{exel_vershik}, \cite{BRV}. In particular, there are no such theorems  for genuine  interactions, i.e when both $\VV$ and $\HH$ are not multiplicative.

The purpose  of the  present article is two fold.

 Firstly, we establish general tools to study the structure of  $C^*(A,\VV,\HH)$ for an accessible and, as the $C^*$-dynamical system case indicates, important class of interactions $(\VV,\HH)$. Thus this   might be a considerable step in understanding these new objects.  More precisely, crossed products associated with  $C^*$-dynamical systems $(\alpha, \LL)$  on a unital $C^*$-algebra $A$ boast their greatest successes in the case   $\alpha(A)$ is a hereditary subalgebra of $A$, cf. \cite{Ant-Bakht-Leb}, \cite{exel2}, \cite{Paschke0}, \cite{Paschke1},  \cite{Rordam}. Then $\LL$ is a \emph{corner retraction}, see  \cite[page 424]{Szwajcar}, \cite{kwa-exel}.  It is uniquely determined by  $\alpha$ and it is called a \emph{complete transfer operator} in \cite{Ant-Bakht-Leb}, see 
\cite{kwa-trans}, \cite{kwa-exel}. In the present paper we focus on  interactions $(\VV,\HH)$  for which both $\VV(A)$ and $\HH(A)$ are hereditary subalgebras of $A$. Then  $\VV(A)$ and $\HH(A)$  are automatically corners in $A$. We call such interactions \emph{corner interactions}. It turns out  that each mapping in such an interaction $(\VV,\HH)$ is completely determined by the other. This plus the obvious connotation to complete transfer operators make it tempting to call $(\VV,\HH)$ a complete interaction \cite{kwa-demon}, but we resist this temptation here. 
\\
We show  that for a corner interaction   $(\VV,\HH)$ the crossed product $C^*(A,\VV,\HH)$ defined in \cite{exel-inter} is the universal $C^*$-algebra generated by a copy of $A$ and  a partial isometry $s$ subject to relations 
 $$
\VV(a)=s(a)s^*, \qquad \HH(a)=s^*(a)s, \qquad a\in A.
$$
As a consequence $C^*(A,\VV,\HH)$ can  be   modeled as the crossed product $A\rtimes_X \Z$, \cite{aee}, of $A$ by a Hilbert bimodule  $X=AsA$. It also follows that $C^*(A,\VV,\HH)\cong C^*(A,\VV)\cong C^*(A,\HH)$ where $C^*(A,\VV)$ (resp. $C^*(A,\HH)$) is the crossed product of $A$   by the completely positive mapping $\VV$ (resp. $\HH$), as  introduced in \cite{kwa-exel}. We study $C^*(A,\VV,\HH)$ by applying general methods developed for Hilbert bimodules \cite{kwa} and $C^*$-correspondences \cite{katsura}. For instance, we have a naturally defined partial homeomorphism $\widehat{\VV}$ of $\SA$ dual to $(\VV,\HH)$. Identifying it with the inverse to the induced partial homeomorphism $X\dashind$ studied  in \cite{kwa} we obtain:   \emph{uniqueness theorem} - topological freeness of $\widehat{\VV}$ implies faithfulness of every representation of $C^*(A,\VV,\HH)$ which is faithful on $A$; ideal lattice description via $\widehat{\VV}$-invariant open sets when $\widehat{\VV}$ is free; and the simplicity    of $C^*(A,\VV,\HH)$ when $\widehat{\VV}$ is minimal and topologically free (see Theorem \ref{interactions} below).  Similarly, identifying  the abstract morphisms in Katsura's version of Pimsner-Voiculescu exact sequence \cite{katsura} we  get a natural cyclic exact sequence for $K$-groups of $C^*(A,\VV,\HH)$ (Theorem \ref{Voicu-Pimsner for interacts}). It  generalizes the corresponding exact  sequence obtained by Paschke  for injective  endomorphisms \cite{Paschke1}, which plays a crucial role, for instance, in \cite{Rordam}.

Secondly, we provide a detailed analysis of  nontrivial corner interactions  with  an interesting \emph{noncommutative dynamics} related to Markov shifts,  and graph $C^*$-algebras as crossed products. More specifically, already in \cite{Cuntz1977}  Cuntz considered  his $C^*$-algebras $\OO_n$ as  crossed products of the core UHF-algebras by  injective endomorphisms implemented by one of the generating isometries. As noticed by R\o rdam \cite[Example 2.5]{Rordam},  a similar reasoning can be performed for Cuntz-Krieger algebras $\OO_A$ by considering an isometry given by the sum of  all generating partial isometries with  properly restricted initial spaces. An analogous isometry in $\OO_A$, but in a sense canonically associated with the underlying dynamics of Markov shifts, was  found in \cite[proof of Theorem  4.3]{exel2}, cf. \cite[formula (4.18)]{Ant-Bakht-Leb}. For the graph $C^*$-algebra $C^*(E)$ associated with a row-finite  graph $E$ with no sources\footnote{We follow here the original conventions of \cite{kum-pask-rae}, \cite{bprs} and hence in the context of representations of graphs we consider  different orientation of edges than  in \cite{Raeburn}, \cite{BRV}, \cite{hr}} the corresponding isometry  appears implicitly in \cite[Theorem 5.1]{BRV} and  explicitly in \cite[Theorem 5.2]{hr}, see formula \eqref{partial isometry for graph} below. In particular, if we assume $E$ is finite, i.e. the sets of vertices and edges are finite, and $E$ has no sources, we know from \cite[Theorem 5.2]{hr} 
states that $C^*(E)$ is naturally isomorphic to the crossed product of the  AF-core $C^*$-algebra $\FF_E$ by an injective endomorphism with hereditary range implemented by the aforementioned isometry $s$. Thus we  have
$$
C^*(E)=C^*(\FF_E\cup\{s\}),\qquad s\FF_E s^*\subset \FF_E, \qquad s^*\FF_E s\subset \FF_E.
$$
Moreover, one can  notice that the above picture remains valid  for arbitrary finite graphs, possibly with sources.  The only difference is that  $s$ may be no longer an isometry but  a partial isometry. Hence the mapping $\FF_E \ni a \to sas^* \in \FF_E$ may be  no longer multiplicative (at least not on its whole domain) and then a natural framework for $C^*(E)$ is the crossed product for an interaction $(\VV,\HH)$ over $\FF_E$ where $\VV(\cdot):=s(\cdot)s^*$, $\HH(\cdot):=s^*(\cdot)s$. 
We call the pair $(\VV,\HH)$ arising in this way  a\emph{ graph interaction}. It  can be viewed from many different perspectives  as a model example illustrating and giving new insight, for instance, to the following   objects and issues that we hope to be pursued in the  future. 

$\bullet$ \emph{Interactions with nontrivial algebras and not multiplicative dynamics}. The crossed product $C^*(\FF_E,$ $\VV,\HH)$ is naturally isomorphic to  the graph $C^*$-algebra $C^*(E)$ (Proposition \ref{thm for Huef and Raeburn}). In general, $(\VV,\HH)$ is not a $C^*$-dynamical system and is not a part of a group interaction \cite{exel3}. We precisely identify  the values of $n\in \N$ for which $(\VV^n, \HH^n)$ is  an interaction (see Proposition \ref{powers of partial isometry}), and it turns out that such $n$'s  might have almost arbitrary distribution. Moreover, $(\VV^n, \HH^n)$ is an interaction for all $n\in \N$ if and only if $(\VV,\HH)$ is a $C^*$-dynamical system (which may happen even if $E$ has sources).

$\bullet$  \emph{Noncommutative Markov shifts}. The main motivation in \cite{exel2} for introducing $C^*$-dynamical systems $(\alpha,\LL)$  was to realize  Cuntz-Krieger algebras $\OO_A$ as crossed products of the underlying Markov shifts, which was in turn suggested by \cite[Proposition 2.17]{CK}. In terms of graph $C^*$-algebras the relevant statement, see \cite[Theorem 5.1]{BRV},  says that when $E$ is finite and has no sinks, then  $C^*(E)$ is isomorphic to Exel crossed product $\D_E\rtimes_{\phi_E,\LL} \N$ where $\D_E\cong C(E^\infty)$  is a canonical masa in $\FF_E$. The spectrum of $\D_E$ is identified with the space of infinite paths $E^\infty$, $\phi_E$ is a transpose to the Markov shift on $E^\infty$ and $\LL$ is its classical Ruelle-Perron-Frobenious operator. Both $\phi_E$  and $\LL$ extend  naturally  to completely positive maps on $C^*(E)$ and the extension of $\phi_E$ is  called the noncommutative Markov shift, cf. e.g. \cite{jp}. However, from the point of view of the crossed product construction  the predominant role is played by  $\LL$, see \cite{kwa-exel}. In particular, $\LL=\HH$  where $(\VV,\HH)$ is the graph  interaction, and $\FF_E$ is a minimal $C^*$-algebra invariant under $\VV$ and containing $\D_E$. Thus  there are good reasons  to regard the graph interaction $(\VV,\HH)$ as an alternative candidate for the noncommutative counterpart of the Markov shift. Our dual and $K$-theoretic pictures of $(\VV,\HH)$   (see Theorem \ref{shifts of path representations} and Proposition \ref{label for K_0 automorphism}, respectively)   support  this point of view. 

$\bullet$  \emph{Graph $C^*$-algebras}. The structure of graph algebras was originally studied via grupoids \cite{kum-pask-rae}, \cite{kum-pask-rae-ren}, and $K$-theory was calculated using a  dual Pimsner-Voiculescu exact sequence and skew products of initial graphs \cite{Raeburn Szymanski}, \cite{Raeburn}. The corresponding results can also be achieved in the realm of partial actions of free groups on certain commutative $C^*$-algebras, see \cite{EL1}, \cite{EL2}.  We present here another approach, based on interactions.  We show that the partial homeomorphism  $\widehat{\VV}$ dual to $\VV$ is topologically free if and only if $E$ satisfies the so-called condition $(L)$ \cite{bprs}. Hence we derive the Cuntz-Krieger uniqueness theorem  \cite{kum-pask-rae-ren},  \cite{bprs}, \cite{Raeburn} from our general uniqueness theorem for interactions. Similarly, we  see that freeness of $\widehat{\VV}$  is equivalent to  condition (K) for $E$ \cite{kum-pask-rae-ren},  \cite{bprs}. Thus minimality and freeness of $\widehat{\VV}$ is equivalent to the known simplicity criteria for $C^*(E)$.  Moreover, it turns out  that pure infiniteness of $C^*(E)$, as defined in \cite{Lac-Spiel}, \cite{kum-pask-rae},  is equivalent to a very strong version of topological freeness of $\widehat{\VV}$ (see Remark \ref{pure infiniteness remark}), which therefore might be considered an instance of a noncommutative version of local boundary action, see \cite{Lac-Spiel}. Finally, our approach to calculation of $K$-groups for $C^*(E)$ seems to be the most direct upon the existing ones; it uses only direct limit description of the AF-core $\FF_E$ and the cyclic six-term exact sequence. 

$\bullet$  \emph{Topological freeness}. The condition known as topological freeness was for the first time explicitly stated in \cite{Donovan} where the author use it to show, what we call here, uniqueness theorem. Namely, he proved that topological freeness of a homeomorphism dual to an automorphisms $\alpha$ of a $C^*$-algebra $A$ implies that any representation of $A\rtimes_\alpha \Z$ whose restriction to $A$ is injective, is automatically faithful.  The converse implication (equivalence between topological freeness and the aforementioned uniqueness property) in the case $A$ is noncommutative turned out to be a difficult problem. It was proved in \cite[Theorem 10.4]{OlesenPedersen3} combined with \cite[Theorem 2.5]{OlesenPedersen2}, see also \cite[Remark 4.8]{OlesenPedersen2}, under the assumption that $A$ is separable. The proof is nontrivial and passes  through conditions involving such notions as Connes spectrum, inner derivations, or proper outerness. Since it is known that condition (L) is necessary for Cuntz-Krieger uniqueness theorem to hold,  our explicit characterization of topological freeness for graph interactions (see Theorem \ref{dynamical dychotomy}) serves as a good illustration and a starting point for further generalizations of the aforementioned notions and facts. 

$\bullet$  \emph{Dilations of completely positive maps}. Let us consider a $C^*$-algebra $C^*(A\cup \{s\})$ generated by a  $C^*$-algebra $A$ and  a partial isometry $s$ such that $sAs^* \subset A$. Also assume that $A$ and $C^*(A\cup \{s\})$ have a common unit. Then $\VV(\cdot)=s(\cdot)s^*$ is a completely positive map on $A$ sending the unit to an idempotent (this is a general form of such mappings, cf. \cite{kwa-exel}). We may put $\HH(\cdot):=s^*(\cdot) s$ and then one can see that 
$$
B:=\clsp\left\{a_0 s^* a_{1} s^*a_2 ... s^*a_{n} s b_1  s b_2 ... s b_n: a_i, b_i\in A, n\in \N \right\}
$$
is the smallest $C^*$-algebra  preserved by $\HH$ and containing $A$.  Plainly, the pair $(\VV, \HH)$ is a corner interaction on $B$. Hence, potentially,  our  results could  be applied to study the structure of $C^*(A\cup\{s\})=C^*(B\cup\{s\})$. Nevertheless, the  dilation of $\VV$  from $A$ to $B$ is a  nontrivial procedure and in general depends on the initial representation of $\VV$ via $s$. The core algebras $B$ arising in this way are studied in detail for instance in \cite{KajiWata}, \cite{kwa-logist}, \cite{kwa-ext}.  %our thorough analysis of  a graph interaction $(\VV,\HH)$ can viewed this way;
Our analysis of the graph interaction $(\VV,\HH)$  can be viewed as a case study of the above situation when  $A\cong \C^N$ is a finite dimensional commutative $C^*$-algebra, see Remark \ref{dilation remark}. In particular, Theorem \ref{shifts of path representations} can be interpreted as that the partial homeomorphism  dual to  a dilation  of the Ruelle-Perron-Frobenius operator $\HH=\LL$  (from  $A$ to $B=\FF_E$) is a quotient of the Markov shift.

\medskip

We begin by presenting relevant notions and statements concerning Hilbert bimodules and briefly clarifying their relationship with generalized $C^*$-correspondences. General corner interactions are studied in Section \ref{interactions crossed products}. Section \ref{Cuntz-Krieger algebras section} is devoted to analysis of graph interactions.  
\subsection{Preliminaries on Hilbert bimodules} 

Throughout  $\A$ is a $C^*$-algebra which (starting from Section \ref{interactions crossed products}) will always be  unital. By  homomorphisms, epimorphisms, etc. between $C^*$-algebras we always mean  $*$-preserving maps.  All  ideals in $C^*$-algebras are  assumed to be closed and two sided.  We adhere to the convention that $$\beta(A,B)=\clsp\{\beta(a,b)\in C: a\in A,b\in B\}$$ 
for  maps $\beta\colon A\times B\to C$  such as inner products, multiplications or representations.  

As in \cite{kwa} %, cf. also  \cite{EL1}, 
we say that a partial homeomorphism $\varphi$ of a topological space $M$, i.e. a homeomorphism whose domain $\Delta$ and range $\varphi(\Delta)$ are open subsets of $M$,   is   {\em
topologically free} if  for any $n>0$ the set of fixed points for $\varphi^n$ (on its natural domain) has empty interior.  A set $V$ is $\varphi$-\emph{invariant}    if 
$
\varphi(V\cap \Delta)= V\cap \varphi(\Delta). 
$
If there are no nontrivial closed invariant sets, then   $\varphi$   is  called  {\em
minimal}, and  $\varphi$  is said to be (residually) \emph{free}, if it is topologically free on every closed invariant set (in the  Hausdorff space case   this amounts to requiring that $\varphi$ has no periodic points).

Following  \cite[1.8]{BMS} and \cite{aee} by a \emph{Hilbert bimodule over $\A$} we mean  $X$ which is both a left Hilbert  $\A$-module and a right Hilbert  $\A$-module  with respective inner products  $\langle \cdot,\cdot \rangle_{\A}$ and  ${_{\A}\langle} \cdot,\cdot \rangle$ satisfying the so-called \emph{imprimitivity condition}:
$
 x \cdot \langle y ,z \rangle_\A = {_\A\langle} x , y  \rangle \cdot z$, for all $x,y,z\in X$.   A \emph{covariant representation} of  $X$ is a pair $(\pi_\A,\pi_{X})$  consisting of a homomorphism $\pi_\A:A\to \B(H)$ and a linear map  $\pi_X:X\to \B(H)$  such that  
 \begin{equation}\label{covariant part one}
 \pi_X(ax)=\pi_\A(a)\pi_X(x),\qquad \pi_X(xa)=\pi_X(x)\pi_\A(a),
 \end{equation}
 \begin{equation}\label{covariant part two}
 \pi_\A(\langle x ,y \rangle_\A)=\pi_X(x)^*\pi_X(y),\qquad  \pi_\A( {_\A\langle} x , y  \rangle)=\pi_X(x)\pi_X(y)^*,
 \end{equation}
 for all $a\in \A$, $x,y \in X$. The \emph{crossed product} $\A\rtimes_X \Z$ is a  $C^*$-algebra generated by a copy of $A$ and $X$   universal with respect to covariant  representations of $X$, see \cite{aee}. It is equipped with the \emph{circle gauge action} $\gamma=\{\gamma_z\}_{z\in \T}$ given on generators by $\gamma_z(a)=a$ and $\gamma_z(x)=zx$, for $a\in A$, $x\in X$, $z\in \T=\{z\in \C:|z|=1\}$.
 
As it is standard, we abuse the language and denote by $\pi$ both an irreducible representation of $A$ and its equivalence class in the spectrum $\SA$ of $A$. It should not cause confusion when we consider induced representations, as for a Hilbert bimodule $X$ over $A$ the induced representation functor  $X\dashind$   preserves such  classes. We briefly recall, and refer to \cite{morita} for all necessary details, that $X\dashind$ maps  a representation $\pi:\A\to \B(H)$  to a representation $X\dashind(\pi):A\to \B(X\otimes_\pi H)$ where the Hilbert  space $X\otimes_\pi H$ is generated by simple tensors $x\otimes_\pi h$, $x\in X$, $h\in H$, satisfying
$
\langle x_1\otimes_\pi h_1, x_2\otimes_\pi h_2 \rangle = \langle h_1,\pi(\langle x_1, x_2 \rangle_{\A})h_2\rangle,
$
and
$$
X\dashind(\pi)(a)  (x\otimes_\pi h) = (a x)\otimes_\pi h, \qquad a\in A.
$$
The spaces $\langle X ,X \rangle_\A$ and ${_\A\langle} X , X  \rangle$ are ideals in $A$ and the bimodule $X$ implements  a Morita equivalence between them. Hence    $X\dashind: \widehat{\langle X ,X \rangle_\A} \to \widehat{{_\A\langle} X , X  \rangle}$ is a homeomorphism which we may naturally treat as  a partial homeomorphism of $\SA$, see \cite{kwa}. 

The results of \cite{kwa} can be summarized as follows.
\begin{thm}\label{main result} Let $X\dashind$ be a partial homeomorphism of $\SA$, as  described above.
\begin{itemize}
\item[i)]
If $X\dashind$ is  topologically
free, then   every faithful  covariant representation  $(\pi_\A,\pi_X)$   of  $X$ `integrates' to
the faithful representation of $A\rtimes_X \Z$.
\item[ii)] If  $X\dashind$ is free, then $J \mapsto  \widehat{J \cap \A}$ is a lattice isomorphism
 between ideals in $\A\rtimes_X \Z$ and  open invariant sets in $\SA$.
 \item[iii)] If  $X\dashind$ is  topologically
free and minimal, then $\A\rtimes_X \Z$ is simple.
\end{itemize}
\end{thm}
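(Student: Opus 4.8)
The plan is to exploit the $\Z$-grading of $B:=\A\rtimes_X\Z$ coming from the circle gauge action $\gamma$. Averaging over $\T$ yields a conditional expectation $E\colon B\to B^\gamma$, $E(b)=\int_\T\gamma_z(b)\,dz$, and for a Hilbert bimodule one has $B^\gamma=\A$ (the zeroth fibre), with the higher fibres $B_n$ identified with the tensor powers of $X$ and their adjoints; $E$ simply discards the terms of degree $n\neq0$ and is faithful on positive elements. All three statements then become assertions about the interplay between $\A$, this grading, and the dynamics of $X\dashind$ on $\SA$, the bridge being the covariance relations \eqref{covariant part two}, which put the source and range projections of each fibre on the two ideals $\langle X,X\rangle_\A$ and ${_\A\langle}X,X\rangle$ that $X\dashind$ interchanges.

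For part i), let $(\pi_\A,\pi_X)$ be a faithful covariant representation with integrated form $\Pi\colon B\to\B(H)$. It suffices to show that $E$ descends to a conditional expectation on $\Pi(B)$, i.e. that $\|\Pi(E(b))\|\le\|\Pi(b)\|$ for all $b$; granting this, faithfulness of $\Pi$ is immediate, since the descended expectation is faithful and $\Pi|_\A$ is faithful. The heart of the matter is a Kishimoto-type estimate. Given $b=\sum_{|n|\le N}b_n$ with $b_n\in B_n$ and $\varepsilon>0$, topological freeness of $X\dashind$ supplies an irreducible $\pi\in\SA$ whose class is fixed by no $X\dashind^{\,n}$ with $1\le|n|\le N$, together with a positive contraction $d\in\A$ concentrated near $\pi$ for which $\|\pi_\A(d)\,\Pi(E(b))\,\pi_\A(d)\|$ is within $\varepsilon$ of $\|\Pi(E(b))\|$, while the off-diagonal compressions $\pi_\A(d)\,\Pi(b_n)\,\pi_\A(d)$ with $n\neq0$ are norm-small, because aperiodicity of $\pi$ keeps the spectral regions carrying the source and range of $B_n$ away from the support of $d$. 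Compressing $\Pi(b)$ by $\pi_\A(d)$ thus isolates the diagonal term up to $\varepsilon$ and yields the required inequality. This norm control, and the aperiodic choice of $d$, is the principal obstacle; everything else is formal.

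For part ii), the key is that freeness is precisely topological freeness that persists on every closed invariant subset of $\SA$, hence on every quotient of $B$ by a gauge-invariant ideal. Given an ideal $J\trianglelefteq B$, set $I:=J\cap\A$; this is an $X$-invariant ideal of $\A$, and it generates a gauge-invariant ideal $J_I\subseteq J$ with $B/J_I\cong(\A/I)\rtimes_{X_I}\Z$, whose dual partial homeomorphism is the restriction of $X\dashind$ to the closed invariant set $\SA\setminus\widehat I$. Since $X\dashind$ is free, it remains topologically free there, so part i) applies to the quotient and forces $J/J_I=0$, i.e. $J=J_I$. Thus every ideal is gauge-invariant and determined by $J\cap\A$. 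Finally, gauge-invariant ideals of $B$ correspond bijectively to $X$-invariant ideals of $\A$, and the Dauns--Hofmann/Rieffel correspondence identifies the latter with open subsets of $\SA$, matching $X$-invariance of $I$ with $X\dashind$-invariance of $\widehat I$; composing these gives the stated lattice isomorphism $J\mapsto\widehat{J\cap\A}$.

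For part iii), minimality means the only closed $X\dashind$-invariant subsets of $\SA$ are $\emptyset$ and $\SA$, so there are no nontrivial open invariant sets; moreover, in the absence of intermediate closed invariant sets, topological freeness on $\SA$ already coincides with freeness. Hence part ii) applies and shows that the only ideals of $B$ are $0$ and $B$, so $\A\rtimes_X\Z$ is simple.
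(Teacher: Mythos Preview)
The paper does not prove this theorem at all: it is quoted verbatim as a summary of the results of \cite{kwa}, so there is no in-paper argument to compare against. What you have written is a high-level sketch of exactly the strategy one expects (and which is carried out in \cite{kwa}): exploit the $\Z$-grading from the gauge action, use that for a genuine Hilbert bimodule the fixed-point algebra is $A$ itself, and prove part~i) by a Kishimoto-type compression argument showing the canonical expectation descends through any faithful covariant representation; then derive ii) by passing to quotients and invoking i) on each closed invariant piece, and iii) from ii) after observing that minimality collapses freeness to topological freeness.

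Two remarks. First, your part~i) is an outline, not a proof: the sentence ``topological freeness supplies an irreducible $\pi$ \dots\ together with a positive contraction $d$ \dots'' hides all of the work. One needs that the union of fixed-point sets of $X\dashind^{\,n}$ for $1\le |n|\le N$ still has empty interior (this uses that these sets are closed, which in the non-Hausdorff spectrum $\SA$ requires an argument), and the construction of $d$ so that $d\,b_n\,d$ is small for $n\neq 0$ uses the bimodule structure of $B_n=X^{\otimes n}$ in a concrete way via the inner products, not merely ``spectral separation''. You flag this as the principal obstacle, which is fair, but as written it is an assertion rather than an argument. Second, for iii) there is a slightly more direct route that avoids invoking the full ideal lattice of ii): from i), any nonzero ideal $J$ must meet $A$ nontrivially; the intersection $J\cap A$ is automatically $X$-invariant (conjugate by elements of $X$ and $X^*$ inside $B$), so minimality forces $J\cap A=A$ and hence $J=B$.
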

\begin{rem}\label{remark about domains}
The map $X\dashind$ is a lift of the so-called Rieffel homeomorphism  $h_X:\Prim \langle X ,X \rangle_\A \to \Prim {_A\langle} X , X  \rangle$,  cf.  \cite[Corollary 3.33]{morita}, \cite[Remark 2.3]{kwa}. Plainly, topological freeness of $(\Prim(\A), h_X)$ implies  topological freeness of $(\SA, X\dashind)$, but the converse is not true and as we will see, cf. Example \ref{remark on topological freeness}  below, Cuntz algebras $\OO_n$  provide an excellent example of this phenomenon. 
\end{rem}
\begin{rem}\label{remark on szwajcar}
In \cite{Szwajcar} Schweizer showed that if $X$ is a full nondegenerate $C^*$-correspondence over a unital $C^*$-algebra $A$, then the Cuntz-Pimsner algebra  $\OO_X$, defined as in \cite{p}, is simple if and only if $X$ is minimal and aperiodic \cite[Definition 3.7]{Szwajcar}.  Clearly, if $X$ is a Hilbert bimodule, minimality of $X\dashind$ is equivalent to the minimality of $X$ and topological freeness of $X\dashind$ implies the aperiodicity of $X$. Moreover, the algebras $\OO_X$ and $\A\rtimes_X \Z$  coincide if and only if ${_A\langle} X, X \rangle$ is an essential ideal in $A$ (which in turn is equivalent to  injectivity of the left action of $A$ on $X$). In particular, if the ideal  ${_A\langle} X, X \rangle$ is essential in $A$ and $\langle X, X\rangle_A=A$ is unital, then \cite[Theorem 3.9]{Szwajcar} implies that $\A\rtimes_X \Z$ is simple iff $X$ is minimal and aperiodic.
\end{rem}
Let us fix a Hilbert bimodule $X$  over $A$. We  notice that it is naturally equipped with the ternary ring operation 
$$
[x,y,z]:=x \langle y,z\rangle_{\A}= {_{\A}\langle}x,y\rangle z, \qquad x,y,z \in X,
$$
making it into a  generalized correspondence over $A$, as defined in \cite[Definition 7.1]{exel-inter}. Alternatively, this generalized correspondence could be   described in terms of \cite[Proposition 7.6]{exel-inter} as the triple $(X,\lambda, \rho)$ where we consider  $X$ as a ${_A\langle} X , X  \rangle$-$\langle X ,X \rangle_A$-Hilbert bimodule  and define homomorphisms $\lambda:A\to {_\A\langle} X , X  \rangle$ and $\rho:A \to \langle X ,X \rangle_\A$  to be  (necessarily unique) extensions of the identity maps. 

The following fact should be compared with \cite[Proposition 7.13]{exel-inter}.
\begin{prop}\label{normal crossed product as a generalized one}
The crossed product  $\A\rtimes_X \Z$ of the Hilbert bimodule $X$ is naturally isomorphic to the covariance algebra $C^*(A,X)$, as defined in \cite[7.12]{exel-inter}, for  $X$ treated as a  generalized correspondence. 
\end{prop}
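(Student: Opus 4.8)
The plan is to show that the two universal objects coincide by checking that they are built from the same generators subject to the same relations. Recall that $\A \rtimes_X \Z$ is universal for covariant representations $(\pi_\A, \pi_X)$ satisfying \eqref{covariant part one} and \eqref{covariant part two}, while the covariance algebra $C^*(A,X)$ of \cite[7.12]{exel-inter} is universal for representations of the generalized correspondence $(X,\lambda,\rho)$ described just above the statement. So the heart of the matter is to prove that a pair of maps $(\pi_\A, \pi_X)$ is a covariant representation of the Hilbert bimodule $X$ in my sense if and only if it is a representation of $(X, \lambda, \rho)$ in Exel's sense. Once this correspondence of representations is established, the two $C^*$-algebras satisfy the same universal property and hence are canonically isomorphic, with the isomorphism fixing $A$ and $X$.

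First I would spell out, using \cite[Prop. 7.6]{exel-inter}, exactly what a representation of the generalized correspondence $(X,\lambda,\rho)$ demands: it should give a nondegenerate representation of the underlying ternary ring / Hilbert bimodule structure together with compatibility with $\lambda: A \to {_\A\langle} X , X \rangle$ and $\rho: A \to \langle X ,X \rangle_\A$, which are the extensions of the identity maps. The key point to exploit is that $\lambda$ and $\rho$ are precisely the canonical inclusions identifying how $A$ acts on the left and right, so the covariance conditions coming from Exel's framework translate directly into \eqref{covariant part one}. Next I would verify that the inner-product relations \eqref{covariant part two} are exactly the conditions forcing a representation of the ternary ring operation $[x,y,z]=x\langle y,z\rangle_\A = {_\A\langle} x,y\rangle z$ to be compatible with both $\lambda$ and $\rho$; this is where the imprimitivity condition is used, since it guarantees the two expressions for $[x,y,z]$ agree and hence that the left and right inner-product relations in \eqref{covariant part two} are mutually consistent rather than overdetermined.

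Having matched the representations in both directions, I would conclude by the standard universality argument: the identity on the common set of generators $A \cup X$ extends, by the universal property on each side, to mutually inverse $*$-homomorphisms between $\A \rtimes_X \Z$ and $C^*(A,X)$. I would also note that this isomorphism is equivariant for the respective gauge actions, since the circle action $\gamma_z(x)=zx$ is defined identically on generators in both constructions.

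The main obstacle I anticipate is purely bookkeeping: Exel's generalized correspondence formalism is phrased somewhat abstractly via \cite[Prop. 7.6]{exel-inter} and \cite[7.12]{exel-inter}, so the real work is translating his axioms for a representation into the concrete bimodule relations \eqref{covariant part one}--\eqref{covariant part two} and confirming that no extra condition is silently imposed on either side. In particular, I would need to check that the nondegeneracy or fullness hypotheses implicit in Exel's covariance algebra do not cut down the class of representations relative to the Hilbert-bimodule crossed product, and conversely that the imprimitivity condition supplies everything Exel's framework requires; this comparison, rather than any hard analysis, is the delicate step, and it is exactly the reason the statement is flagged as one to be compared with \cite[Prop. 7.13]{exel-inter}.
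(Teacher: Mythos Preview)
Your overall strategy---match the universal properties by matching the classes of representations---is the same as the paper's, but your proposal skips the one substantive step and so, as written, does not actually prove the proposition.

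The issue is your description of $C^*(A,X)$. Exel's covariance algebra is \emph{not} defined directly as a universal object for ``representations of the generalized correspondence $(X,\lambda,\rho)$'' in a way that can be read off from \cite[Prop.~7.6]{exel-inter}. It is defined as the quotient of the Toeplitz algebra $\TT(A,X)$ by the ideal $J_\ell+J_r$ generated by \emph{redundancies}: differences $a-k$ with $a\in(\ker\lambda)^\bot$, $k\in XX^*$ (resp.\ $a\in(\ker\rho)^\bot$, $k\in X^*X$) satisfying $ax=kx$ (resp.\ $xa=xk$) for all $x\in X$. Your proposal never mentions the Toeplitz algebra or the redundancy mechanism, and the phrase ``no extra condition is silently imposed on either side'' is exactly where the content lies.

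What the paper does, and what you need to do, is identify these redundancy ideals concretely. First one checks that $(\ker\lambda)^\bot={_A\langle X,X\rangle}$ and $(\ker\rho)^\bot=\langle X,X\rangle_A$. Then, using the ternary relation $xy^*z={_A\langle x,y\rangle}z$ to see that $XX^*$ is a $C^*$-subalgebra, an approximate-unit argument shows that for fixed $a$ the element $k$ in a redundancy is \emph{uniquely determined}; in fact if $a=\sum_i {_A\langle x_i,y_i\rangle}$ then $k=\sum_i x_iy_i^*$. This gives
\[
J_\ell=\clsp\{{_A\langle x,y\rangle}-xy^*:x,y\in X\},\qquad J_r=\clsp\{\langle x,y\rangle_A-x^*y:x,y\in X\},
\]
and only now is it clear that killing $J_\ell+J_r$ imposes precisely the inner-product relations \eqref{covariant part two}, so that $C^*(A,X)$ and $A\rtimes_X\Z$ are universal for the same relations. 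Without this computation your argument is a restatement of the claim rather than a proof.
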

\begin{proof}
The Toeplitz algebra $\TT(\A,X)$ for the generalized correspondence $X$, see \cite[page 57]{exel-inter}, is a universal $C^*$-algebra generated by a copy of $\A$ and $X$ subject to all $\A$-$\A$-bimodule relations plus the ternary ring  relations:
\begin{equation}\label{teranry relation}
x y^* z = x \langle y,z\rangle_{\A}= {_{\A}\langle}x,y\rangle z, \qquad x,y,z \in X.
\end{equation}
The $C^*$-algebra $C^*(\A,X)$ is  the quotient $\TT(\A,X)/(J_\ell+J_r)$ where  $J_\ell$ (respectively $J_r$) is an ideal in $\TT(\A,X)$ generated by the elements $a-k$ such that $a\in(\ker\lambda)^\bot$,  $k \in XX^*$ (resp.  $a\in (\ker\rho)^\bot$,  $k \in X^*X$) and  
\begin{equation}\label{redundancy relations}
ax =kx \quad (\textrm{or resp. } xa=xk) \quad \textrm{ for all } x\in X.
\end{equation}
Note that $(\ker\lambda)^\bot={_A\langle} X, X \rangle$ and $(\ker\rho)^\bot=\langle X, X \rangle_A$. By \eqref{teranry relation},   $XX^*$ and $X^*X$ are  $C^*$-subalgebras of $\TT(\A,X)$. Hence using approximate units argument we see that when $a$ is  fixed  relations  \eqref{redundancy relations} determine $k$ uniquely. It follows  that  
$$
J_\ell=\clsp\{{_A\langle} x,y\rangle -x y^*:x,y \in X\},\qquad J_r=\clsp\{ \langle x,y\rangle_{\A} -x^* y : x,y \in X \},
$$ 
because if (for instance) $a-k\in J_\ell$ where $a=\sum_{i=1}^n {_A\langle} x_i, y_i \rangle\in (\ker\lambda)^\bot$ and $k\in X^*X$, then by  \eqref{teranry relation},  $ax=\sum_{i=1}^n x_i y_i^* x $ for all $x\in X$ and thus $k=\sum_{i=1}^n x_i y_i^*$. 
 
 Accordingly, both 
 $C^*(A,X)$ and $\A\rtimes_X \Z$ are universal $C^*$-algebras generated by copies of $A$ and $X$ subject to the same relations. 
\end{proof}

  Katsura obtained in \cite{katsura}   a version of  the Pimsner-Voiculescu exact sequence for general $C^*$-correspondences and their $C^*$-algebras. We recall it in the case $X$ is a Hilbert bimodule and in a form suitable for our purposes.  We consider the linking algebra $D_X=\K(X\oplus A)$  in the following matrix representation
$$
D_X=\left(\begin{matrix} 
 \K(X) & X
\\
\widetilde{X} & A
\end{matrix}\right),
$$
where $\widetilde{X}$ is the dual Hilbert bimodule of $X$, cf. e.g. \cite[pages 49, 50]{morita}, and let $\iota:{_A\langle X,X\rangle} \to A$, $\iota_{11}: \K(X)\to D_X$ and $\iota_{22}: A\to D_X$  be  inclusion maps;   $\iota_{11}(a)=\left(\begin{matrix} 
a& 0
\\
0 & 0
\end{matrix}\right)$, $\iota_{22}(a)=\left(\begin{matrix} 
0& 0
\\
0 & a
\end{matrix}\right)$. By \cite[Proposition B.3]{katsura}, $(\iota_{22})_*:K_*(A)\to K_*(D_X)$	 is an isomorphism and  by \cite[Theorem 8.6]{katsura}  the following  sequence is exact: 
\begin{equation}\label{Katsura Pimsner voiculescu sequence}
\begin{xy}
\xymatrix{
      K_0({_A\langle X,X\rangle}) \ar[rr]_{\,\,\,\,\,\,\,\,\,\,\iota_*-(X_*\circ\phi_*)} & & K_0(A) \ar[rr]_{(i_A)_*\,\,\,\,\,\,\,\,\,}  & &   \ar[d]  K_0(A\rtimes_X \Z )
             \\
   K_1(A\rtimes_X \Z) \ar[u]  & &  K_1(A)  \ar[ll]_{\,\,\,\,\,\,\,\,\,\,\,\,(i_A)_*}  &  & \ar[ll]_{\iota_*-(X_*\circ\phi_*)}  K_1({_A\langle X,X\rangle})
              } 
  \end{xy} 
\end{equation}
where $\phi: A\to \LL(X)$ is the homomorphism implementing left action of $A$ on $X$, and 
 $X_*:K_*({_A\langle X,X\rangle})\to K_*(A)$  is the composition of $(\iota_{11})_*:K_*({_A\langle X,X\rangle})\to K_*(D_X)$ and the inverse to the isomorphism $(\iota_{22})_*:K_*(A)\to K_*(D_X)$.

\section{Complete interactions and their crossed products}\label{interactions crossed products} 
In this section, following closely the relationship between $C^*$-dynamical systems and interactions, we introduce corner interactions,  describe
the structure of the associated crossed product and establish fundamental tools for its analysis (Theorems \ref{interactions}, \ref{Voicu-Pimsner for interacts}). 

\subsection{Interactions and $C^*$-dynamical systems} It is instructive to consider interactions as  generalization of pairs $(\al,\LL)$, sometimes called Exel systems \cite{hr}, consisting of an endomorphism $\alpha:\A\to \A$  and its \emph{transfer operator}, i.e.  a  positive linear  map $\LL:\A\to \A$  such that
$\LL(\alpha(a)b) =a\LL(b)$,  $a,b\in \A$, see \cite{exel2}. Then $\LL$ is  automatically continuous, $*$-preserving, and we also have: 
$
\LL(b\alpha(a)) =\LL(b)a$, $a,b\in \A$.
We say that a transfer operator $\LL$ is 
 \emph{regular} if $\al(\LL(1))=\al(1)$, or equivalently \cite[Proposition 2.3]{exel2}, if 
$\E(a):=\alpha (\LL(a))
 $
 is a conditional expectation from $\A$ onto $\al(\A)$. We note that originally \cite{exel2} Exel called such transfer operators non-degenerate. However, the use of the latter term is a bit unfortunate. For instance, it is used in the related context to mean a different property in \cite[page 60]{exel-inter}, and also there are historical reasons to change this name, see \cite{kwa-exel}.  
 
 It is important, see \cite{kwa-trans}, that the range of a regular transfer operator $\LL$ coincides with the annihilator $(\ker\al)^\bot$ of the kernel of $\al$ and $\LL(1)$ is the unit in $\LL(\A)=(\ker\al)^{\bot}$, so in particular the latter is a complemented ideal. 
 \begin{defn}
 A pair $(\al,\LL)$ where $\LL:\A\to \A$ is a regular transfer operator for an endomorphism $\al:\A\to\A$ will be called a \emph{$C^*$-dynamical system}.
 \end{defn}
 A dissatisfaction concerning asymmetry in the $C^*$-dynamical system $(\al,\LL)$;  $\al$ is multiplicative while $\LL$ is `merely' positive linear, lead the author of \cite{exel-inter} to the following more general notion.
\begin{defn}[\cite{exel-inter}, Definition 3.1]\label{interaction definition}  The pair  $(\VV,\HH)$  of  positive   linear maps $\VV,\HH:\A\to\A$ is called an \emph{interaction} over $\A$  if 
  \begin{itemize}
     \item[(i)] $\VV\circ \HH\circ\VV=\VV$,
  \item[(ii)] $\HH\circ \VV\circ \HH=\HH$,
  \item[(iii)] $\VV(ab)=\VV(a)\VV(b)$, if either $a$ or $b$ belong to $\HH(\A)$,
    \item[(iv)] $\HH(ab)=\HH(a)\HH(b)$, if either $a$ or $b$ belong to $\VV(\A)$.
 \end{itemize}
\end{defn}
\begin{rem} An interaction $(\VV,\HH)$,  or even a  $C^*$-dynamical system $(\al,\LL)$, in general does not generate a semigroup of interactions \cite{kwa-demon} and all the more is not an element of a group interaction in the sense of \cite{exel3}. This will be a generic case in our example arising from graphs, cf.  Proposition \ref{powers of partial isometry} below. Accordingly, in general the facts proved  in \cite{kwa-demon}, \cite{exel3}, can not be applied in our present context.  
\end{rem}

Let $(\VV,\HH)$ be an interaction. By \cite[Propositions 2.6, 2.7]{exel-inter}, $\VV(\A)$ and $\HH(\A)$ are  $C^*$-subalgebras of $\A$,
   $\E_{\VV}:=\VV\circ \HH$ is a conditional expectation onto $\VV(\A)$,
    $\E_{\HH}:=\HH\circ\VV$ is a conditional expectation onto $\HH(\A)$,
 and the mappings
  $$
   \VV: \HH(\A) \to \VV(\A),\qquad  \HH: \VV(\A) \to \HH(\A)
  $$
   are isomorphisms, each being the inverse of the other. Actually we have   % Since we assume that $\A$ is unital we may  reveal more of the structure of the pair $(\VV,\HH)$.
\begin{prop}\label{prop stanislaw} The relations $\E_{\VV}=\VV\circ \HH$,  $\E_{\HH}=\HH\circ\VV$, $\theta=\VV|_{\E_{\HH}(A)}$ yield a one-to-one correspondence between  interactions $(\VV,\HH)$ and triples $(\theta,\E_{\VV}, \E_{\HH})$ consisting of two conditional expectations $\E_{\VV}, \E_{\HH}$ and an isomorphism $\theta: \E_{\HH}(A) \to \E_{\VV}(A)$.% where .
   \end{prop}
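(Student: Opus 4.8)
The forward assignment is already well defined by the facts recalled immediately above the statement: $\E_{\VV}=\VV\circ\HH$ and $\E_{\HH}=\HH\circ\VV$ are conditional expectations onto $\VV(\A)$ and $\HH(\A)$, and $\theta=\VV|_{\HH(\A)}\colon\HH(\A)\to\VV(\A)$ is an isomorphism with inverse $\HH|_{\VV(\A)}$. The plan is therefore to produce an explicit inverse assignment and verify that the two are mutually inverse. First I would record the reconstruction formulas
$$
\VV=\theta\circ\E_{\HH},\qquad \HH=\theta^{-1}\circ\E_{\VV}.
$$
Indeed, for $a\in\A$ one has $\theta(\E_{\HH}(a))=\VV(\HH(\VV(a)))=\VV(a)$ by axiom (i), and symmetrically $\theta^{-1}(\E_{\VV}(a))=\HH(\VV(\HH(a)))=\HH(a)$ by axiom (ii). Since these formulas express $(\VV,\HH)$ in terms of the triple alone, the forward assignment is injective, and it remains to prove it is onto and that the constructed inverse indeed lands among interactions.

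For surjectivity I would start from an arbitrary triple $(\theta,\E_{\VV},\E_{\HH})$, write $B:=\E_{\VV}(\A)$ and $C:=\E_{\HH}(\A)$ for the two ranges (which are $C^*$-subalgebras), and \emph{define} $\VV:=\theta\circ\E_{\HH}$ and $\HH:=\theta^{-1}\circ\E_{\VV}$. Positivity and linearity are immediate, as conditional expectations are positive linear maps and $\theta,\theta^{-1}$ are $*$-isomorphisms; moreover $\VV(\A)=\theta(C)=B$ and $\HH(\A)=\theta^{-1}(B)=C$. Axioms (i) and (ii) then follow by a short computation resting on the fact that $\E_{\VV}$ and $\E_{\HH}$ restrict to the identity on their ranges $B$ and $C$. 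For instance $\HH(\VV(a))=\theta^{-1}\big(\E_{\VV}(\theta(\E_{\HH}(a)))\big)=\theta^{-1}(\theta(\E_{\HH}(a)))=\E_{\HH}(a)$, since $\theta(\E_{\HH}(a))\in B$; feeding this back yields $\VV(\HH(\VV(a)))=\VV(\E_{\HH}(a))=\VV(a)$ by idempotency of $\E_{\HH}$, which is (i), and (ii) is entirely symmetric.

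The step I expect to need the most care is the multiplicativity axioms (iii) and (iv). The key tool here is the module property of a conditional expectation, namely $\E_{\HH}(ab)=\E_{\HH}(a)\,b$ whenever $b\in C$ and $\E_{\HH}(ab)=a\,\E_{\HH}(b)$ whenever $a\in C$, combined with the fact that $\theta$ is multiplicative on all of $C$. Thus for $b\in C=\HH(\A)$ one computes $\VV(ab)=\theta(\E_{\HH}(a)\,b)=\theta(\E_{\HH}(a))\,\theta(b)=\VV(a)\,\VV(b)$, using $\theta(b)=\theta(\E_{\HH}(b))=\VV(b)$; the case $a\in C$, and axiom (iv) with $B$ and $\theta^{-1}$ in the roles of $C$ and $\theta$, are handled by the identical argument. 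This establishes that $(\VV,\HH)$ is an interaction. Finally I would verify consistency of the two assignments: computing $\VV\circ\HH$, $\HH\circ\VV$ and $\VV|_{C}$ for the newly built pair returns exactly $\E_{\VV}$, $\E_{\HH}$ and $\theta$ — these are precisely the identities already produced while checking (i) and (ii) — so the maps in both directions are mutually inverse and the asserted one-to-one correspondence follows.
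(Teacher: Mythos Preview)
Your proof is correct and follows exactly the approach the paper takes: the paper's own proof simply says it suffices to check that $\VV:=\theta\circ\E_{\HH}$ and $\HH:=\theta^{-1}\circ\E_{\VV}$ form an interaction and declares this ``straightforward,'' whereas you have carefully written out all the verifications (axioms (i)--(iv), the reconstruction formulas yielding injectivity, and the consistency check). There is nothing to add.
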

   \begin{proof}
It suffices to verify that  if  $(\theta,\E_{\VV}, \E_{\HH})$ is  as in the assertion, then  
$\VV(a):=\theta(\E_{\HH}(a))$ and  $\HH(a):=\theta^{-1}(\E_{\VV}(a))$ form an interaction. This is straightforward.
  \end{proof}
Recall that   the  $C^*$-algebra $A$ has the unit $1$. It follows that the algebras involved in an interaction are automatically also unital.
      \begin{lem}\label{lem stanislaw} If $(\VV,\HH)$ is an interaction, then $\VV(1)=\E_{\VV}(1)$ and $\HH(1)=\E_{\HH}(1)$ are  units  in $\VV(\A)$ and $\HH(\A)$, respectively (in particular, they are  projections).
   \end{lem}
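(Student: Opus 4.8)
The plan is to prove the two asserted equalities in two stages: first identify $\E_\VV(1)$ and $\E_\HH(1)$ as the units of the subalgebras $\VV(\A)$ and $\HH(\A)$, and only afterwards match these units with $\VV(1)$ and $\HH(1)$. For the first stage I would isolate the single fact about conditional expectations that is needed: if $\E\colon A\to B$ is a conditional expectation onto a $C^*$-subalgebra $B$, then $\E(1)$ is automatically a unit for $B$. Indeed, for $b\in B$ the bimodule property of $\E$ together with $\E(b)=b$ gives $b\,\E(1)=\E(b\cdot 1)=\E(b)=b$ and, symmetrically, $\E(1)\,b=b$. Applying this to $\E_\VV=\VV\circ\HH$ (a conditional expectation onto $\VV(\A)$) and to $\E_\HH=\HH\circ\VV$ (onto $\HH(\A)$), as recalled before the lemma from \cite[Prop. 2.6, 2.7]{exel-inter}, yields that $\E_\VV(1)$ and $\E_\HH(1)$ are the units of the respective subalgebras; being units of $C^*$-algebras, they are in particular self-adjoint idempotents, i.e.\ projections, which disposes of the parenthetical clause.

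For the second stage, consider the equality $\VV(1)=\E_\VV(1)$. I would evaluate axiom (i), $\VV\circ\HH\circ\VV=\VV$, at the unit:
\[
\VV(1)=\VV\bigl(\HH(\VV(1))\bigr)=\VV\bigl(\E_\HH(1)\bigr).
\]
By the first stage $\E_\HH(1)$ is the unit of $\HH(\A)$, and by \cite[Prop. 2.6, 2.7]{exel-inter} the restriction $\VV\colon\HH(\A)\to\VV(\A)$ is a $C^*$-isomorphism. Since an isomorphism of $C^*$-algebras sends the unit of its domain to the unit of its range, $\VV\bigl(\E_\HH(1)\bigr)$ is precisely the unit of $\VV(\A)$, which by the first stage equals $\E_\VV(1)$. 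This gives $\VV(1)=\E_\VV(1)$. The equality $\HH(1)=\E_\HH(1)$ then follows by the symmetric argument, using axiom (ii) in place of (i) and interchanging the roles of $\VV$ and $\HH$.

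I do not anticipate a genuine obstacle: the entire content is the bookkeeping identity $\VV(1)=\VV(\HH(\VV(1)))$ furnished by axiom (i), combined with the observation that $\VV$ carries the unit of $\HH(\A)$ to the unit of $\VV(\A)$. The only point that must be kept honest is that $\E_\VV(1)$ (respectively $\E_\HH(1)$) is a genuine two-sided unit for the whole subalgebra, not merely a local unit on part of it; this is exactly what the conditional-expectation bimodule identity of the first paragraph secures, after noting that the range of $\E_\VV$ coincides with $\VV(\A)$ because $\VV(\HH(\A))=\VV(\A)$.
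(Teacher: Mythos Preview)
Your proof is correct. It differs from the paper's argument in the order of operations and in which tools carry the weight of the identity $\VV(1)=\E_\VV(1)$. The paper first establishes $\E_\VV(1)=\VV(1)$ by a direct chain of equalities using axioms (i), (iii) and (iv):
\[
\E_\VV(1)=\VV(\HH(1))=\VV(\HH(1))\VV(1)=\VV(\HH(1))\VV(\HH(\VV(1)))=\VV\bigl(\HH(1)\HH(\VV(1))\bigr)=\VV(\HH(\VV(1)))=\VV(1),
\]
and only afterwards invokes the conditional-expectation bimodule identity to show this element is a unit in $\VV(\A)$. You reverse the order: first show $\E_\VV(1)$ is the unit of $\VV(\A)$ via the bimodule identity, then use only axiom (i) together with the cited fact that $\VV$ restricts to an isomorphism $\HH(\A)\to\VV(\A)$ to conclude $\VV(1)=\VV(\E_\HH(1))=\E_\VV(1)$. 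Your route is slightly more conceptual and sidesteps the longer computation; the paper's route is more self-contained in that it relies only on the raw interaction axioms rather than on the structural corollary that $\VV|_{\HH(\A)}$ is an isomorphism.
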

   \begin{proof}
Let us observe  that
\begin{align*}
\E_{\VV}(1)&=\VV(\HH(1))=\VV(\HH(1)1)=\VV(\HH(1))\VV(1)=\VV(\HH(1))\VV\big(\HH(\VV(1))\big)
\\
&=\VV\big(\HH(1)\HH(\VV(1))\big)=\VV\big(\HH(1\VV(1))\big)=\VV\big(\HH(\VV(1))\big)=\VV(1).
\end{align*}
Therefore we have $
\VV(a)=\E_{\VV}(\VV(a))= \E_{\VV}(1 \VV(a))=\E_{\VV}(1) \VV(a)=  \VV(1)\VV(a)
$ for arbitrary $a\in A$. It follows that  $\VV(1)$ is the unit in $\VV(\A)$ and a similar argument works for $\HH$.
  \end{proof}
The following statement generalizes \cite[Proposition 3.4]{exel-inter}.
    \begin{prop}\label{from C--dynamical systems to interactions}
     Any $C^*$-dynamical system $(\al,\LL)$ is an interaction.
    \end{prop}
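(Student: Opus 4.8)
The plan is to show that the pair $(\VV,\HH):=(\al,\LL)$ satisfies the four axioms of Definition \ref{interaction definition}. Positivity and linearity of both maps are immediate: $\al$ is a $*$-homomorphism and $\LL$ is by definition a positive linear transfer operator. Condition (iii) comes for free, because $\VV=\al$ is multiplicative on all of $\A$, so in particular $\VV(ab)=\VV(a)\VV(b)$ whenever one of the factors lies in $\HH(\A)$.

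The computational heart is a single identity. First I would record that for every $a\in\A$,
$$
\HH(\VV(a))=\LL(\al(a))=\LL(\al(a)\cdot 1)=a\LL(1),
$$
using the defining transfer relation $\LL(\al(a)b)=a\LL(b)$ with $b=1$; symmetrically $\LL(\al(a))=\LL(1)a$ follows from $\LL(b\al(a))=\LL(b)a$. I would then invoke the two recalled facts about non-degenerate transfer operators: that $\LL(1)$ is the unit of $\LL(\A)=(\ker\al)^{\bot}$, and that non-degeneracy means $\al(\LL(1))=\al(1)$. Conditions (i) and (ii) now fall out directly. For (i), $(\VV\circ\HH\circ\VV)(a)=\al(a\LL(1))=\al(a)\al(\LL(1))=\al(a)\al(1)=\al(a)=\VV(a)$, the last equalities using multiplicativity of $\al$ and non-degeneracy. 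For (ii), $(\HH\circ\VV\circ\HH)(a)=\LL(\al(\LL(a)))=\LL(a)\LL(1)=\LL(a)=\HH(a)$, since $\LL(a)\in\LL(\A)$ and $\LL(1)$ is its unit.

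The one step requiring genuine care is condition (iv), namely $\HH(ab)=\HH(a)\HH(b)$ when $a$ or $b$ lies in $\VV(\A)=\al(\A)$; this is where the pair genuinely "sees" the transfer structure. I would treat the two subcases separately. If $a=\al(c)$, then $\HH(ab)=\LL(\al(c)b)=c\LL(b)$, while $\HH(a)\HH(b)=\LL(\al(c))\LL(b)=c\LL(1)\LL(b)=c\LL(b)$, the last equality because $\LL(1)$ is the unit of $\LL(\A)\ni\LL(b)$. The subcase $b=\al(c)$ is symmetric: $\HH(ab)=\LL(a\al(c))=\LL(a)c$ by $\LL(b\al(a))=\LL(b)a$, while $\HH(a)\HH(b)=\LL(a)\LL(\al(c))=\LL(a)\LL(1)c=\LL(a)c$.

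I expect condition (iv) to be the main obstacle, though a mild one: one must keep straight which of the two transfer identities to apply on which side, and must repeatedly use that $\LL(1)$ acts as a two-sided unit on the range $\LL(\A)$ rather than as a global unit of $\A$. The remaining verifications are then purely formal once the identity $\LL(\al(a))=a\LL(1)=\LL(1)a$ and the non-degeneracy equation $\al(\LL(1))=\al(1)$ are in hand.
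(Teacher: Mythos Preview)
Your proof is correct and follows essentially the same approach as the paper: both arguments rest on the transfer identity, the fact that $\LL(1)$ is a unit in $\LL(\A)$, and non-degeneracy $\al(\LL(1))=\al(1)$. The only cosmetic differences are that for (i) the paper invokes the conditional expectation property $\E\circ\al=\al$ rather than your direct computation $\al(a\LL(1))=\al(a)\al(1)$, and for (iv) the paper writes out one subcase and dispatches the other by passing to adjoints rather than treating both symmetrically.
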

    \begin{proof}
Consider the conditions (i)-(iv) in Definition \ref{interaction definition}. Since  $\al\circ\LL\circ\al=\E\circ \al=\al$, (i)  is satisfied. To see (ii) recall  that $\LL(1)$ is the  unit in $\LL(A)$, cf. \cite[Proposition 1.5]{kwa-trans}, and therefore   
    $$
      \LL(\al(\LL(a)))=\LL(1 \al(\LL(a)))=\LL(1)\LL(a)=\LL(a)
    .$$
 Condition (iii) is trivial for $(\al,\LL)$, and   (iv) holds because 
 $$
 \LL(a\al(b))=\LL(a) b= \LL(a)  \LL(1)b= \LL(a) \LL(1 \al(b))  =\LL(a) \LL(\al(b)),
 $$
 and by passing to adjoints we also get $\LL(\al(b)a)=\LL(\al(b))\LL(a)$.
    \end{proof}
    
As shown in \cite{Ant-Bakht-Leb},  in the case  the conditional expectation $\E=\al\circ \LL$ is given by 
\begin{equation}\label{complete transfer operator condition}
\E(a) =\alpha(1)a\alpha(1),\qquad a\in\A,
\end{equation}
 there  is a very natural   crossed product   associated to the $C^*$-dynamical system $(\al,\LL)$.
This crossed product  coincides with the one introduced in   \cite{exel2}  and  is sufficient to cover many classic constructions, see \cite{Ant-Bakht-Leb}.
%Actually, when $\alpha$ is injective this crossed product is  Paschke crossed product \cite{Paschke0}.

 A transfer operator for which \eqref{complete transfer operator condition} holds is called   \emph{complete}  \cite{Ant-Bakht-Leb}, \cite{kwa-trans}. It is a \emph{corner retraction} \cite{Szwajcar}, \cite{kwa-exel}. By \cite{kwa-trans}  a given endomorphism $\al$ admits a  complete transfer operator $\LL$  if  and only if  $\ker\al$ is a complemented ideal and $\al(\A)$ is a hereditary subalgebra of $\A$. In this case $\LL$ is a unique regular transfer operator for $\al$, see \cite{kwa-trans}, \cite{Ant-Bakht-Leb}, \cite{Szwajcar}, \cite{kwa-exel}. 
We naturally generalize  the aforementioned concepts to  interactions, cf. also \cite{kwa-demon}.
   
   \begin{defn}
  An interaction $(\VV,\HH)$ will be called a\emph{ corner interaction} if   $\VV(\A)$ and $\HH(\A)$ are hereditary  subalgebras of $\A$.
   \end{defn}   
\begin{prop}\label{characterization of corner interactions}
An interaction $(\VV,\HH)$ is corner if and only if   $\VV(\A)=\VV(1)\A\VV(1)$  and  $\HH(\A)=\HH(1)\A\HH(1)$ are corners in $\A$.
Moreover, for a corner interaction $(\VV,\HH)$ the following conditions are equivalent 
\begin{itemize} 
\item[i)]  $(\VV,\HH)$ is a  $C^*$-dynamical system  (with a complete transfer operator),
 \item[ii)]  $\VV$ is multiplicative,
 \item[iii)] $\ker \VV$ is an ideal in $A$,
 \item[iv)]   $\HH(A)$ is an ideal in $A$, 
 \item[v)] $\HH(1)$ lies in the center of $A$.
\end{itemize}
\end{prop}
\begin{proof}
For  the first part of  the assertion apply  Lemma \ref{lem stanislaw} and notice that if $B$ is a hereditary subalgebra of $\A$ and $B$ has a  unit $P$, then $B=P\A P$.   To show the second part of assertion let us suppose   that $(\VV,\HH)$ is a corner interaction. %By the first part of assertion we  have $\E_\HH(a)=\HH(1)a\HH(1)$, $a\in \A$. 

The implications  i) $\Rightarrow$ ii) $\Rightarrow$ iii) and the equivalence iv) $\Leftrightarrow$ v) are clear.

iii) $\Rightarrow$ v).   By the first part of the assertion  $\VV$ is isometric on $\HH(1)A\HH(1)$ and thus
$\ker \VV\cap \HH(1)A\HH(1)=\{0\}$.
 In view of  Lemma \ref{lem stanislaw}, for any $a\in A$ we have  $a(1-\HH(1))\in \ker\VV$. Hence  if $\ker\VV$ is an ideal,  then  $\HH(1)a\big(1-\HH(1)\big) a^*\HH(1)\in (\ker\VV) \cap \HH(1)A\HH(1)=\{0\}$, that is  $\HH(1)a(1-\HH(1))=0$ which means that $\HH(1)a=a\HH(1)$.

v) $\Rightarrow$ i).
By the first part of the assertion $\E_\HH(a)=\HH(1)a\HH(1)$. Thus, for any $a,b\in A$, we have
\begin{align*}
\VV(ab)&=\VV(\E_{\HH}(ab))= \VV(\HH(1)ab\HH(1))= \VV(a \HH(1) b\HH(1))
\\
&= \VV(a \E_{\HH}(b))=\VV(a)\VV(\E_{\HH}(b))=\VV(a)\VV(b).
\end{align*}
Hence $\VV$ is an endomorphism of $A$. The map $\HH$ is a transfer operator for $\VV$ because
$$
\HH(a\VV(b))=\HH(a) \HH(\VV(b)))=\HH(a) \HH(1)b\HH(1)=\HH(a) b.
$$
\end{proof}
As it is indicated by the uniqueness of the complete transfer operator, it turns out that  each mapping   in a corner interaction determines the other.

\begin{prop}\label{proposition about uniqueness of interactions}
  A positive linear map $\VV:\A\to \A$ is a part of a non-zero corner  interaction  $(\VV,\HH)$  if and only if 
 $\|\VV(1)\|=1$, $\VV(\A)$ is  a hereditary subalgebra of $\A$  and there is a  projection $P\in \A$ such that $\VV:P\A P \to \VV(\A)$ is an isomorphism.
 
Moreover, in the above equivalence   $P$  and $\HH$ are uniquely determined by $\VV$, and we have
\begin{equation}\label{definition of H}
\HH(a):=\VV^{-1}(\VV(1)a\VV(1)), \qquad a\in A,
\end{equation} 
where $\VV^{-1}$ is the inverse to $\VV:P\A P \to \VV(\A)$.
    \end{prop}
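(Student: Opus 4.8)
The plan is to reduce both implications, via Proposition \ref{prop stanislaw}, to the data of two conditional expectations and an isomorphism between their ranges. For the ``only if'' direction, suppose $(\VV,\HH)$ is a non-zero complete interaction. By Lemma \ref{lem stanislaw} the element $\VV(1)=\E_{\VV}(1)$ is a projection and the unit of $\VV(\A)$; it cannot vanish, since $\VV(1)=0$ would force $\VV(\A)=\{0\}$ and hence $\HH=\HH\circ\VV\circ\HH=0$, contradicting non-triviality, so $\|\VV(1)\|=1$. Hereditarity of $\VV(\A)$ is part of the definition of completeness. Setting $P:=\HH(1)$, Lemma \ref{lem stanislaw} makes $P$ a projection and Proposition \ref{characterization of complete interactions} gives $\HH(\A)=P\A P$; since $\VV\colon\HH(\A)\to\VV(\A)$ is one of the mutually inverse isomorphisms recalled before Proposition \ref{prop stanislaw}, this $P$ has the required property. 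The formula for $\HH$ then follows by writing $\VV(1)a\VV(1)=\E_{\VV}(a)=\VV(\HH(a))$ and applying $\VV^{-1}$, using $\HH(a)\in\HH(\A)$.

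For the ``if'' direction I would build the triple $(\theta,\E_{\VV},\E_{\HH})$ with $\E_{\HH}(a):=PaP$, $\E_{\VV}(a):=\VV(1)a\VV(1)$ and $\theta:=\VV|_{P\A P}$, and invoke Proposition \ref{prop stanislaw}. First I would establish $\VV(1)=\VV(P)$: since $0\le P\le 1$, positivity gives $\VV(P)\le\VV(1)$; as $\VV(P)$ is the unit of $\VV(\A)\ni\VV(1)$, the relation $\VV(1)\ge\VV(P)$ together with $\VV(1)\ge0$ and $\|\VV(1)\|=1$ forces the spectrum of $\VV(1)$, relative to the unit $\VV(P)$, to be $\{1\}$, whence $\VV(1)=\VV(P)$. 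Consequently $\VV(1)$ is the unit of the hereditary algebra $\VV(\A)$, so $\VV(\A)=\VV(1)\A\VV(1)$ is a corner, and $\E_{\VV},\E_{\HH}$ are the (unique) conditional expectations onto the corners $\VV(\A)$ and $P\A P$, while $\theta$ is an isomorphism by hypothesis. Proposition \ref{prop stanislaw} then produces an interaction whose second map is exactly $\HH(a)=\VV^{-1}(\VV(1)a\VV(1))$ and whose first map is $a\mapsto\theta(\E_{\HH}(a))=\VV(PaP)$; completeness is immediate, since $\VV(\A)$ is hereditary by hypothesis and $P\A P$ is a corner.

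The crux, and the step I expect to be the main obstacle, is to identify this reconstructed first map with $\VV$, i.e. to prove $\VV=\VV\circ\E_{\HH}$, equivalently $\VV(a)=\VV(PaP)$ for all $a$, using only positivity rather than complete positivity. Decomposing $a$ along $P$ and $1-P$, the diagonal off-corner term is handled by $0\le(1-P)a(1-P)\le\|a\|(1-P)$ together with $\VV(1-P)=0$. The delicate point is the vanishing of $\VV$ on the cross terms $Pa(1-P)$, which I would obtain as follows: for $x:=Pa(1-P)$ and $\lambda\in\R$, expanding $y_\lambda:=\lambda P+x$ gives $y_\lambda^* y_\lambda=\lambda^2 P+\lambda(x+x^*)+x^*x$ with $x^*x\in(1-P)\A(1-P)$, so $\VV(x^*x)=0$. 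Positivity of $\VV$ (which is also $*$-preserving) then yields $\lambda^2\VV(P)+\lambda(\VV(x)+\VV(x)^*)\ge0$ for every real $\lambda$, which forces $\VV(x)+\VV(x)^*=0$; repeating the argument with $ia$ in place of $a$ gives $\VV(x)=\VV(x)^*$, and hence $\VV(x)=0$. Taking adjoints disposes of the remaining cross term.

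Finally, for uniqueness of $P$ and $\HH$ I would observe that the identity $\VV=\VV\circ\E_{\HH}$ and injectivity of $\VV|_{P\A P}$ give $\ker\VV=\{a:PaP=0\}$; on positive elements this set equals $\{a\ge0:aP=0\}$, since $PaP=0$ is equivalent to $a^{1/2}P=0$ for $a\ge0$. Thus any two admissible projections $P_1,P_2$ satisfy $(1-P_1)P_2=0=(1-P_2)P_1$, and taking adjoints yields $P_1=P_2P_1=P_2$. With $P$ pinned down, $\VV^{-1}$ and hence $\HH$ are uniquely determined by $\VV$ through the stated formula.
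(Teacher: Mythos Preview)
Your proof is correct, but the route differs from the paper's in the sufficiency direction. You take $\E_{\HH}(a):=PaP$ and then face the problem of showing $\VV=\VV\circ\E_{\HH}$, which you solve by a direct positivity argument killing the cross terms $Pa(1-P)$. The paper instead \emph{defines} $\E_{\HH}(a):=\VV^{-1}(\VV(a))$ and observes that this is a norm-one idempotent onto $P\A P$ (since $\|\E_{\HH}\|\le\|\VV^{-1}\|\,\|\VV\|=\|\VV(1)\|=1$), hence a conditional expectation by Tomiyama's theorem; with this choice $\theta\circ\E_{\HH}=\VV$ is automatic, and only afterwards, via Lemma~\ref{lem stanislaw} applied to the resulting interaction, does one conclude $\VV(P)=\VV(1)$ and $\E_{\HH}(a)=PaP$. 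Your argument is more elementary in that it avoids Tomiyama, at the cost of the extra work on cross terms; the paper's is slicker but relies on the norm-one-projection theorem. For uniqueness of $P$ the paper argues directly that $\VV(P_1P_2P_1)=\VV(P_2)=\VV(1)=\VV(P_1)$ and uses injectivity of $\VV$ on each corner to get $P_1P_2P_1=P_1$, $P_2P_1P_2=P_2$; your kernel characterisation via $\ker\VV\cap A_+=\{a\ge 0: aP=0\}$ is a pleasant alternative that yields the same conclusion.
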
 
    \begin{proof}
The necessity of the stated conditions follows from Proposition \ref{characterization of corner interactions} and Lemma \ref{lem stanislaw}. For the sufficiency note that  $\VV(P)$ is a unit in $\VV(A)$ and therefore $\VV(A)=\VV(P) A\VV(P)$, as $\VV(A)$ is  hereditary in $A$. In particular, $\E_\VV(a):=\VV(P)a\VV(P)$  is a conditional expectation onto $\VV(A)$.  We  define $\E_\HH(a):=\VV^{-1}(\VV(a))$ where $\VV^{-1}$ is the inverse to $\VV:P\A P \to \VV(\A)$. Then  $\E_{\HH}$  is an idempotent map of norm one because  $\|\E_{\HH}\|=\|\VV\|=\|\VV(1)\|=1$. Hence $\E_{\HH}$ is a conditional expectation onto $PAP$. By Proposition \ref{prop stanislaw}, the triple $(\VV,\E_{\VV},\E_{\HH})$ yields a (necessarily corner) interaction $(\VV,\HH)$ where $\HH(a)=\VV^{-1}(\VV(P)a\VV(P))$. In particular, it follows from Lemma \ref{lem stanislaw} that $\VV(P)=\VV(1)$, that is $\HH$ is given by \eqref{definition of H}.
    
What remains to be shown is  the uniqueness of  $P$. Suppose then that 
 $(\VV,\HH_i)$, $i=1,2$,  are  two corner  interactions and consider   projections $P_1:=\HH_1(1)$ and $P_2:=\HH_2(1)$. We have
        $$
    \VV(P_1P_2P_1)=\VV(P_2)=\VV(1)=\VV(P_1)=\VV(P_2P_1P_2),
      $$ 
      and as $\VV$ is injective on $\HH_i(\A)=P_i\A P_i$, $i=1,2$, it follows that $P_1P_2P_1=P_1$ and $P_2=P_2P_1 P_2$. This  implies $P_1=P_2$.  
    \end{proof} 
    %Note that in general might be part of uncountable number of different interaction, see .
\subsection{Crossed product for corner interactions}\label{subsection two two} From now on  $(\VV,\HH)$ will always stand for a corner interaction. We define the corresponding  crossed product in universal terms.
  \begin{defn}
A \emph{covariant representation}  of $(\VV,\HH)$ is a pair $(\pi,S)$ consisting of a non-degenerate representation $\pi:\A\to \B(H)$ and an operator $S\in \B(H)$ (which is  necessarily a partial isometry) such that  
$$ 
S\pi(a)S^*=\pi(\VV(a)) \, \textrm{ and  } \, S^*\pi(a)S=\pi(\HH(a))\,\, \textrm{ for all } a\in \A.
$$
The \emph{crossed product for the  interaction} $(\VV,\HH)$ is the $C^*$-algebra  $C^*(A,\VV,\HH)$ generated by $i_\A(\A)$ and $s$ where $(i_\A,s)$ is a universal covariant representation of $(\VV,\HH)$. It is equipped with the \emph{circle gauge action} determined  by $\gamma_z(i_A(a))=i_A(a)$,  $a\in A$, and $\gamma_z(s)=zs$.
  \end{defn}
Obviously, the above definition generalizes the crossed product  studied in  \cite{Ant-Bakht-Leb}. In other words $C^*(A,\VV,\HH)$  coincides with Exel's crossed product \cite{exel2} when $(\VV,\HH)$ is a $C^*$-dynamical system. To show it is essentially  the same algebra as the one  associated to (general) interactions in \cite{exel-inter},  we  realize $C^*(A,\VV,\HH)$ as the crossed product for a Hilbert bimodule. 
  To this end, we conveniently adopt Exel's construction of his generalized correspondence associated to $(\VV,\HH)$, \cite[Section 5]{exel-inter}.

    Let $X_0=\A \odot \A$ be  the algebraic tensor product over the complexes, and let 
$
  \langle \cdot , \cdot  \rangle_\A$ and  ${_\A\langle} \cdot  , \cdot  \rangle$     
be the $\A$-valued sesqui-linear functions defined on $X_0\times X_0$ by
$$
 \langle a\odot b, c\odot d  \rangle_\A= b^* \HH(a^*c)d ,\qquad  {_\A\langle}  a\odot b, c\odot d  \rangle=a \VV(bd^*)c^*.
$$ 
We consider the linear space  $X_0$ as an $\A$-$\A$-bimodule with the natural module operations: $a \cdot (b\odot c)= ab \odot c$,  $(a \odot b)\cdot c= a\odot b  c$.
\begin{lem}\label{pre Hilbert bimodule construction}
A quotient of $X_0$ becomes naturally a pre-Hilbert $\A$-$\A$-bimodule. More precisely,
\begin{itemize}
\item[i)] the space $X_0$ with a function $\langle \cdot , \cdot  \rangle_\A$  (respectively  ${_\A\langle} \cdot  , \cdot  \rangle$) becomes a right (respectively left) semi-inner product $\A$-module;
\item[ii)]  the corresponding semi-norms 
$$
\|x\|_{\A}:=\|\langle x, x  \rangle_\A\|^{\frac{1}{2}} \quad \textrm{and}\quad {_{\A}\|x\|}:=\|{_{\A}\langle} x, x  \rangle\|^{\frac{1}{2}}
$$
coincide on $X_0$ and thus the quotient space $X_0/\|\cdot\|$ obtained by modding out the vectors of length zero with respect to the  seminorm $\|x\|:=\|x\|_{\A}={_{\A}\|x\|}$
is both a left and a right pre-Hilbert module over $\A$;
\item[iii)] denoting by $a\otimes b$ the canonical image of  $a\odot b$ in the quotient space $X_0/\|\cdot\|$ we have 
$$
ac\otimes b =a\otimes \HH(c)b,\quad \textrm{if }c\in \VV(\A),\qquad a\otimes cb =a\VV(c)\otimes b,\quad \textrm{if }c\in \HH(\A),
$$
and  $a\otimes b= a\VV(1)\otimes \HH(1)b$ for all $a,b \in \A$; 
\item[iv)] the inner-products in $X_0/\|\cdot\|$ satisfy  the imprimitivity condition.
\end{itemize}
\end{lem}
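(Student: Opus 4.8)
The plan is to exploit the canonical decomposition of a complete interaction furnished by Propositions \ref{prop stanislaw} and \ref{characterization of complete interactions}. Writing $P:=\HH(1)$ and $Q:=\VV(1)$ for the unit projections of Lemma \ref{lem stanislaw}, we have $\HH(\A)=P\A P$, $\VV(\A)=Q\A Q$, the two conditional expectations are $\E_\VV(a)=QaQ$ and $\E_\HH(a)=PaP$, and $\theta:=\VV|_{P\A P}$ is a $*$-isomorphism onto $Q\A Q$ with $\HH=\theta^{-1}\circ\E_\VV$ and $\VV=\theta\circ\E_\HH$. In particular $\VV$ and $\HH$, being compositions of a conditional expectation with a $*$-isomorphism, are completely positive. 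This immediately yields the positivity needed in part (i): for $x=\sum_i a_i\odot b_i$ the element $\langle x,x\rangle_\A=\sum_{i,j}b_i^*\HH(a_i^*a_j)b_j$ is the compression of the positive matrix $\HH^{(n)}([a_i^*a_j])\geq 0$ by the column $(b_i)$, hence is positive, and symmetrically for ${_\A\langle} x,x\rangle$ via complete positivity of $\VV$. Sesquilinearity, the module identities, and the Hermitian symmetry (the last using that $\VV,\HH$ are $*$-preserving) are then direct verifications from the defining formulas.

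The heart of the lemma, and the step I expect to be the main obstacle, is the coincidence of the two seminorms in part (ii). First I would reduce to a single simple tensor by amplification: one checks routinely that $(\VV^{(n)},\HH^{(n)})$ is again a complete interaction on $M_n(\A)$, and that a general $x=\sum_i a_i\odot b_i$ over $\A$ corresponds to a single simple tensor over $M_n(\A)$ (carrying $(a_1,\dots,a_n)$ in the first row and $(b_1,\dots,b_n)$ in the first column) in such a way that $\|\langle x,x\rangle_\A\|$ and $\|{_\A\langle} x,x\rangle\|$ equal the norms of the corresponding $M_n(\A)$-valued inner products. It therefore suffices to prove, for an arbitrary complete interaction and $a,b\in\A$, that $\|b^*\HH(a^*a)b\|=\|a\VV(bb^*)a^*\|$. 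Setting $\alpha:=\E_\VV(a^*a)$ and $\beta:=\E_\HH(bb^*)$, a short computation using $\HH(a^*a)=\theta^{-1}(\alpha)$, $\VV(bb^*)=\theta(\beta)$, the fact that $\theta$ is an isometric $*$-isomorphism, and the identity $\|c^*c\|=\|cc^*\|$ gives
\[
\|b^*\HH(a^*a)b\|=\|\alpha^{1/2}\theta(\beta)\alpha^{1/2}\|,\qquad \|a\VV(bb^*)a^*\|=\|\theta(\beta)^{1/2}\alpha\,\theta(\beta)^{1/2}\|.
\]
These agree by the elementary identity $\|u^{1/2}vu^{1/2}\|=\|v^{1/2}uv^{1/2}\|$, valid for positive $u,v$ in any $C^*$-algebra (both sides equal $\|u^{1/2}v^{1/2}\|^2$). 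Once the seminorms coincide, modding out the common null space produces a space that is simultaneously a left and a right pre-Hilbert $\A$-module, which is part (ii).

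For parts (iii) and (iv) the key algebraic fact is the identity
\[
\HH(u)\,w\,\HH(v)=\HH\big(u\,\VV(w)\,v\big),\qquad u,v,w\in\A,
\]
together with its $\VV$-analogue. I would prove it by expanding the right-hand side, via $\HH=\theta^{-1}\circ\E_\VV$ and multiplicativity of $\theta^{-1}$, as $\HH(u)\,\E_\HH(w)\,\HH(v)$, and noting that the left-hand side equals this as well since $\HH(u)P=\HH(u)$, $P\HH(v)=\HH(v)$ and $\E_\HH(w)=PwP$. Granting this, each relation in (iii) follows by checking that the inner product of the difference of the two sides against an arbitrary simple tensor vanishes (for $c\in\VV(\A)$ one has $\HH(c^*a^*e)=\HH(c^*)\HH(a^*e)$ by axiom (iv)), so the difference has zero length; the reduction $a\otimes b=a\VV(1)\otimes\HH(1)b$ is the special case $c=Q$ and $c=P$. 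Finally, the imprimitivity condition (iv) amounts to $a\otimes bd^*\HH(c^*e)f=a\VV(bd^*)c^*e\otimes f$ in the quotient, and pairing both sides on the left with a simple tensor $g\odot h$ turns this into exactly the displayed identity with $u=g^*a$, $w=bd^*$, $v=c^*e$; hence the difference is orthogonal to everything and has zero length. The only genuinely delicate point throughout is to keep the argument free of circularity: since a covariant representation faithful on $\A$ is only constructed later (as the Fock representation of the very bimodule being built here), the seminorm coincidence must be established intrinsically, as above, rather than by realizing $a\odot b$ as $\pi(a)s\pi(b)$.
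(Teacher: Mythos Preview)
Your argument is correct and essentially recovers the same computational content as the paper, but with a different organizational emphasis. The paper defers parts (i), (ii), and the first half of (iii) to the corresponding propositions in \cite{exel-inter} (adapted to the complete case), recording only the key norm formula $\|x\|_\A=\|\HH(aa^*)^{1/2}\HH(\VV(bb^*))^{1/2}\|=\|\VV(\HH(aa^*))^{1/2}\VV(bb^*)^{1/2}\|={_\A\|x\|}$ for column matrices $a,b$; part (iv) is then a direct chain of equalities using (iii). Your route is more self-contained: you exploit the corner decomposition $\VV=\theta\circ\E_\HH$, $\HH=\theta^{-1}\circ\E_\VV$ of Propositions \ref{prop stanislaw} and \ref{characterization of complete interactions} throughout, and your unifying identity $\HH(u)\,w\,\HH(v)=\HH(u\,\VV(w)\,v)$ is a clean device that the paper does not isolate explicitly (it is implicit in the chain of equalities in (iv)). The amplification-to-$M_n(\A)$ reduction for (ii) is exactly the passage to column matrices that the paper inherits from \cite[Prop.~5.4]{exel-inter}, and your final step $\|\alpha^{1/2}\theta(\beta)\alpha^{1/2}\|=\|\theta(\beta)^{1/2}\alpha\,\theta(\beta)^{1/2}\|$ is the same identity underlying that norm formula.

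One small imprecision: the reduction $a\otimes b=a\VV(1)\otimes\HH(1)b$ is not literally a ``special case'' of the first two displayed relations in (iii). Setting $c=Q$ in the first and $c=P$ in the second both yield $aQ\otimes b=a\otimes Pb$, not $a\otimes b=aQ\otimes b$. The conclusion nonetheless follows by the same orthogonality check you use elsewhere, since $\HH(x^*a)=\HH(x^*aQ)$ (as $\HH$ factors through $\E_\VV(\cdot)=Q\cdot Q$) and $\HH(x^*a)\in P\A P$ absorbs the factor $P$ on the right; this is exactly how the paper proves it. So the method is sound, only the wording should be adjusted.
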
  
\begin{proof}
i) All axioms of $\A$-valued semi-inner products for $\langle \cdot , \cdot  \rangle_\A$ and  ${_\A\langle} \cdot  , \cdot  \rangle$  except the non-negativity are straightforward, and to show the  latter  one may rewrite the proof of \cite[Proposition 5.2]{exel-inter}
(just erase the symbol $e_\HH$ or put $e_\HH=\HH(1)$).\\ 
ii) Similarly, the proof of \cite[Proposition 5.4]{exel-inter} implies that for $x=\sum_{i=1}^{n}a_i^*\odot b_i$, $a_i,b_i\in A$, we have 
\begin{equation}\label{banach space norm}
\|x\|_{\A}=\|\HH(aa^*)^{\frac{1}{2}} \HH(\VV(bb^*))^{\frac{1}{2}}\|=\|\VV(\HH(aa^*))^{\frac{1}{2}} \VV(bb^*)^{\frac{1}{2}}\|={_{\A}\|x\|}
\end{equation}
where $a=(a_1,...,a_n)^T$ and $b=(b_1,...,b_n)^T$ are viewed as column matrices.
\\ 
iii) For the  first part consult the proof of \cite[Proposition 5.6]{exel-inter}. The second part can be proved analogously. Namely, for every $x$, $y$, $a$, $b\in \A$ we have
$$
\langle x\otimes y, a\otimes b  \rangle_\A= y^*\HH(x^*a)b= y^*\HH(x^*a\VV(1))\HH(1)b=\langle x\otimes y, a\VV(1)\otimes \HH(1)b  \rangle_\A,
$$
which implies that $\|a\otimes b- a\VV(1)\otimes \HH(1)b\|=0$.\\
iv) The form of imprimitivity condition  allows us to check it only on simple tensors. Using iii), for $a,b,c,d,e,f\in A$, we have
\begin{align*}
 a\otimes  b \langle c\otimes d, e\otimes f  \rangle_\A &=a\otimes b d^* \HH(c^*e)f
 =a\otimes \HH(1)b d^* \HH(c^*e)f 
 \\
 &= a \VV\Big(\HH(1)b d^* \HH(c^*e)\Big)\otimes f = a \VV(\HH(1)b d^*) \VV(\HH(c^*e))\otimes f 
\\
 &= a \VV(b d^*) \VV(1)c^*e\VV(1)\otimes f = a \VV(b d^*) c^*e\otimes f
 \\
 &={_{\A}\langle} a\otimes  b,  c\otimes d \rangle  e\otimes f.   
\end{align*}
\end{proof}
\begin{defn}
We call the completion $X$ of the pre-Hilbert bimodule $X_0$ described in Lemma \ref{pre Hilbert bimodule construction}  a \emph{Hilbert bimodule associated to }$(\VV,\HH)$.
\end{defn}
\begin{rem}\label{exel's imprimitivity bimodule}
The Hilbert bimodule $X$ could be obtained directly from the  imprimitivity $\K_\VV$-$\K_\HH$-bimodule $\XX$ constructed   in  \cite[Section 5]{exel-inter}. Indeed, by \eqref{banach space norm}, $X$ and $\XX$ coincide as  Banach spaces, and since 
$$
\langle X, X \rangle_\A=\A\HH(1)\A, \qquad {_{\A}\langle} X, X \rangle=\A\VV(1)\A,
$$
$X$ can be considered as an imprimitivity $\A\VV(1)\A$-$\A\HH(1)\A$-bimodule. Furthermore,  the mappings $\lambda_\VV:\A\to \K_\VV$, $\lambda_\HH:\A\to \K_{\VV}$, the author of \cite{exel-inter} uses to define an $\A$-$\A$-bimodule structure on $\XX$, when restricted respectively to  $\A\VV(1)\A$ and $\A\HH(1)\A$ are isomorphisms. Hence we may use them to assume the identifications  $\K_\VV=\A\VV(1)\A$ and  $\K_\HH=\A\HH(1)\A$, and then Exel generalized correspondence  and the Hilbert bimodule $X$ coincide.
\end{rem}
Now we are ready to identify  the structure of  $C^*(A,\VV,\HH)$ as the Hilbert bimodule crossed product. 
\begin{prop}\label{prop on} We have a one-to-one correspondence between covariant representations $(\pi,S)$ of the interaction $(\VV,\HH)$ and covariant representations $(\pi,\pi_X)$ of the Hilbert bimodule $X$ associated to $(\VV,\HH)$. It is given by relations 
$$
\pi_X(a\otimes b)= \pi(a)S\pi(b),\,\, x \in X,\,\, \qquad S=\pi_X(1\otimes 1).
$$
In particular,  $C^*(A,\VV,\HH)\cong \A\rtimes_X \Z$ and the isomorphism is  gauge-invariant.
\end{prop}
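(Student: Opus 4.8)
The plan is to establish the asserted bijection between the two kinds of covariant representations and then to read off the isomorphism of the universal $C^*$-algebras from it. I would begin with the forward direction. Given a covariant representation $(\pi,S)$ of $(\VV,\HH)$, define a map on simple tensors by $\pi_X(a\otimes b):=\pi(a)S\pi(b)$; this is bilinear in $(a,b)$, so it descends to a linear map on the algebraic tensor product $X_0=\A\odot\A$. The heart of the matter is to check the two inner-product covariance relations \eqref{covariant part two} on simple tensors. For the right inner product, $\pi_X(a\otimes b)^*\pi_X(c\otimes d)=\pi(b)^*\,S^*\pi(a^*c)S\,\pi(d)=\pi(b)^*\pi(\HH(a^*c))\pi(d)=\pi(b^*\HH(a^*c)d)$, which is exactly $\pi(\langle a\odot b,c\odot d\rangle_\A)$; for the left inner product, $\pi_X(a\otimes b)\pi_X(c\otimes d)^*=\pi(a)\,S\pi(bd^*)S^*\,\pi(c)^*=\pi(a\VV(bd^*)c^*)=\pi({_\A\langle} a\odot b,c\odot d\rangle)$. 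Here one uses precisely the defining identities $S^*\pi(\cdot)S=\pi(\HH(\cdot))$ and $S\pi(\cdot)S^*=\pi(\VV(\cdot))$. The module relations \eqref{covariant part one} are immediate from multiplicativity of $\pi$.

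The step I expect to be the only genuine subtlety is well-definedness: $X$ is a completion of a quotient of $X_0$, so I must verify that the formula kills null vectors and extends continuously. This is handled uniformly by the right inner-product identity just checked: for $x\in X_0$ we get $\|\pi_X(x)\|^2=\|\pi_X(x)^*\pi_X(x)\|=\|\pi(\langle x,x\rangle_\A)\|\le\|\langle x,x\rangle_\A\|=\|x\|^2$, so vectors of seminorm zero map to $0$ and $\pi_X$ is contractive, hence extends to all of $X$. Thus $(\pi,\pi_X)$ is a covariant representation of $X$.

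For the converse, given a covariant representation $(\pi,\pi_X)$ of $X$ I set $S:=\pi_X(1\otimes 1)$. Using the module relations to rewrite $\pi_X(1\otimes 1)\pi(a)=\pi_X(1\otimes a)$ and $\pi(a)\pi_X(1\otimes 1)=\pi_X(a\otimes 1)$, and then applying \eqref{covariant part two} together with the computations ${_\A\langle} 1\otimes a,1\otimes 1\rangle=\VV(a)$ and $\langle 1\otimes 1,a\otimes 1\rangle_\A=\HH(a)$, I obtain $S\pi(a)S^*=\pi(\VV(a))$ and $S^*\pi(a)S=\pi(\HH(a))$. Taking $a=1$ gives $S^*S=\pi(\HH(1))$, a projection by Lemma \ref{lem stanislaw}, so $S$ is a partial isometry and $(\pi,S)$ is a covariant representation of $(\VV,\HH)$. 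The two assignments are mutually inverse: starting from $(\pi,S)$ one recovers $\pi_X(1\otimes 1)=\pi(1)S\pi(1)=S$ (as $\pi$ is unital), and starting from $(\pi,\pi_X)$ the module relations give $\pi(a)\pi_X(1\otimes 1)\pi(b)=\pi_X(a\otimes b)$.

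Finally I would deduce the isomorphism from universality. Applying the forward construction to the universal pair $(i_\A,s)$ produces a covariant representation of $X$, which by the universal property of $\A\rtimes_X\Z$ yields a $*$-homomorphism $\A\rtimes_X\Z\to C^*(\A,\VV,\HH)$; applying the converse construction to the universal covariant representation of $X$ yields, by the universal property of $C^*(\A,\VV,\HH)$, a $*$-homomorphism in the other direction. Since both act as the identity on the generators $\A$ and send $1\otimes 1\leftrightarrow s$, they are mutually inverse, giving $C^*(\A,\VV,\HH)\cong\A\rtimes_X\Z$. For gauge-invariance I note that under this identification both circle actions fix $\A$ pointwise and multiply the distinguished generator ($s$, respectively the class of $1\otimes 1\in X$) by $z$; checking the intertwining relation on these generators shows the isomorphism carries one gauge action to the other.
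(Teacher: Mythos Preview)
Your proof is correct and follows essentially the same route as the paper's. The only cosmetic difference is that the paper verifies the norm bound via the \emph{left} inner product, computing $\|\sum_i\pi(a_i)S\pi(b_i)\|^2=\|\pi(\sum_{i,j}a_i\VV(b_ib_j^*)a_j^*)\|$, whereas you use the right inner product; since the two seminorms coincide by Lemma~\ref{pre Hilbert bimodule construction}(ii) this is immaterial, and your version has the small advantage of making the well-definedness and contractivity follow in one stroke from the right covariance identity you had just checked.
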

\begin{proof} Let  $(\pi,S)$ be a covariant representation of $(\VV,\HH)$. Since  
\begin{align*}
\|\sum_{i}\pi(a_i)S\pi(b_i)\|^2&=\|\sum_{i,j}\pi(a_i)S\pi(b_ib^*_j)S^*\pi(a_j^*)\|=\|\pi\Big(\sum_{i,j}a_i\VV(b_ib^*_j)a^*_j\Big)\|
\\
&\leq\|\sum_{i}a_i\otimes b_i\|^2,
\end{align*}
we  see that $\pi_X(\sum_{i}a_i\otimes b_i):= \sum_{i}\pi(a_i)S\pi(b_i)$ defines a contractive linear mapping on $X_0/\|\cdot\|$. Clearly, it  satisfies 
 \eqref{covariant part one} and \eqref{covariant part two}. Hence by continuity  it extends uniquely  to $X$ in a way that $(\pi,\pi_X)$ is a covariant representation of $X$. Conversely suppose  that $(\pi,\pi_X)$ is a covariant representation  of the Hilbert bimodule $X$ and put $S:=\pi_X(1\otimes 1)$. Then for $a\in A$ we have
\begin{align*}
S\pi(a)S^*&=\pi_X((1\otimes 1) a)\pi_X(1\otimes 1)^*=\pi({_\A\langle} 1\otimes a,1\otimes 1\rangle)=\pi(\VV(a)).
\end{align*}
Similarly,
$
S^*\pi(a)S=\pi_X(1\otimes 1)^* \pi_X(a(1\otimes 1))=\pi(\langle 1\otimes 1,a\otimes 1\rangle_\A)=\pi(\HH(a))$.
\end{proof}
\begin{rem}
The Hilbert bimodule $X$ is nothing but the  GNS $C^*$-correspon\-dence determined by the completely positive map $\HH$, cf. \cite{kwa-exel}.  In particular, the above proposition shows that $C^*(A,\VV,\HH)$ is isomorphic to the  crossed product of $A$ by the completely positive map $\HH$ (or $\VV$, depending on preferences), see  \cite{kwa-exel}.
\end{rem}
Finally, by Remark  \ref{exel's imprimitivity bimodule} and Propositions \ref{normal crossed product as a generalized one}, \ref{prop on} we get 
\begin{cor}\label{prop on2}
Let  $\XX$ be the generalized correspondence constructed out of $(\VV,\HH)$ as in  \cite[Section 5]{exel-inter}.
The crossed product $C^*(A,\VV,\HH)$ for the interaction $(\VV,\HH)$ and the covariance algebra $C^*(\A,\XX)$ for $\XX$  are naturally isomorphic. 
\end{cor}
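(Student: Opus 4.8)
The plan is to assemble the three structural results already established and simply chain the resulting isomorphisms, since all the substantive work has been carried out in the preceding statements. First I would invoke Proposition \ref{prop on}, which furnishes a natural (in fact gauge-equivariant) isomorphism $C^*(A,\VV,\HH)\cong \A\rtimes_X\Z$, where $X$ denotes the Hilbert bimodule associated to the complete interaction $(\VV,\HH)$. This reduces the problem to comparing $\A\rtimes_X\Z$ with the covariance algebra $C^*(\A,\XX)$.

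Next I would apply Proposition \ref{normal crossed product as a generalized one}. Regarding the Hilbert bimodule $X$ as a generalized correspondence over $\A$ via its ternary ring operation $[x,y,z]=x\langle y,z\rangle_\A={_\A\langle}x,y\rangle z$, that proposition yields $\A\rtimes_X\Z\cong C^*(\A,X)$, the isomorphism being the identity on the common generating copies of $\A$ and $X$. Thus the only remaining point is to identify the generalized correspondence $X$ with Exel's generalized correspondence $\XX$.

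This identification is precisely the content of Remark \ref{exel's imprimitivity bimodule}, and it is here that the one genuinely delicate step resides. The norm computation \eqref{banach space norm} shows that $X$ and $\XX$ agree as Banach spaces, and I would then verify that the $\A$-valued inner products, and hence the ternary operations, coincide under the identifications $\K_\VV=\A\VV(1)\A$ and $\K_\HH=\A\HH(1)\A$ effected by the isomorphisms $\lambda_\VV$, $\lambda_\HH$ restricted to $\A\VV(1)\A$ and $\A\HH(1)\A$. Once it is confirmed that these identifications intertwine the left and right actions together with the ternary structure, $X$ and $\XX$ are the same generalized correspondence, whence $C^*(\A,X)=C^*(\A,\XX)$.

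Combining the three isomorphisms gives $C^*(A,\VV,\HH)\cong\A\rtimes_X\Z\cong C^*(\A,X)=C^*(\A,\XX)$, which is the asserted natural isomorphism. The main obstacle is therefore not the chaining itself, which is formal, but the careful bookkeeping already absorbed into Remark \ref{exel's imprimitivity bimodule}: confirming that passing from the abstract imprimitivity $\K_\VV$-$\K_\HH$-bimodule of \cite[Sec. 5]{exel-inter} to the concrete bimodule $X$ over $\A$ leaves Exel's ternary ring relations \eqref{teranry relation} intact.
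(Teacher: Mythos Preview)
Your proposal is correct and follows exactly the same route as the paper: the corollary is proved there in a single line by citing Remark \ref{exel's imprimitivity bimodule} together with Propositions \ref{normal crossed product as a generalized one} and \ref{prop on}, precisely the three ingredients you chain. Your additional commentary on where the delicate bookkeeping lies (the identification of $X$ with $\XX$ as generalized correspondences) is accurate and only makes the argument more explicit.
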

\subsection{Topological freeness, ideal structure and simplicity criteria} Let $(\VV,\HH)$ be a corner interaction. Since  $\VV(\A)$ and $\HH(\A)$ are hereditary  subalgebras of $A$ we have a standard way,   cf. e.g. \cite[Proposition 4.1.9]{Pedersen}, of  identifying their spectra with  open subsets of $\SA$. Namely, we assume that
\begin{equation}\label{identification equalities}
\widehat{\VV(\A)}=\{\pi \in \widehat{\A}: \pi(\VV(1))\neq 0\},\qquad \widehat{\HH(\A)}=\{\pi\in \widehat{\A}:\pi(\HH(1))\neq 0\}.
\end{equation}
The isomorphisms $\VV:\HH(\A)\to \VV(\A)$ and $\HH:\VV(\A)\to \HH(\A)$ induce mutually inverse homeomorphisms $\widehat{\VV}:\widehat{\VV(\A)}\to \widehat{\HH(\A)}$ and  $\widehat{\HH}:\widehat{\HH(\A)}\to \widehat{\VV(\A)}$, which under  identifications \eqref{identification equalities}  become partial homeomorphisms of $\SA$. %  between these sets.
\begin{defn}
We   refer  to   $\widehat{\VV}$ and $\widehat{\HH}$ as  \emph{partial homeomorphisms dual to $(\VV,\HH)$}.
\end{defn}
\begin{rem}\label{dual remarks}
For an irreducible representation $\pi:\A\to \B(H)$ with $\pi(\HH(1))\neq 0$ the element  $\widehat{\HH}(\pi)\in \SA$ is given by the (unique up to unitary equivalence) extension of the representation
$$
\widehat{\HH}(\pi)|_{\VV(\A)}=\pi\circ \HH: \VV(\A)\to \B(\pi(\HH(1))H), 
$$
Moreover, in the case $(\VV,\HH)$ is a  $C^*$-dynamical system $\HH(A)$ is an ideal and then $\pi(\HH(1))H=H$.
\end{rem}
%The above partial dynamical systems  identify with the Rieffel homeomorphism $\h=X\dashind$. 
\begin{prop}\label{coincidence of actions}
If $X$  is the Hilbert bimodule associated to  $(\VV,\HH)$ and  $X\dashind$ is the  partial homeomorphism of $\SA$ associated to $X$, then
$X\dashind=\widehat{\HH}. $

\end{prop}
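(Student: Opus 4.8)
The plan is to exhibit, for each $\pi\in\SA$ with $\pi(\HH(1))\neq 0$, an explicit unitary realizing $X\dashind(\pi)$ as the representation $\widehat{\HH}(\pi)$ described in Remark \ref{dual remarks}. First I would pin down the domains. By Remark \ref{exel's imprimitivity bimodule} we have $\langle X,X\rangle_\A=\A\HH(1)\A$ and ${_\A\langle}X,X\rangle=\A\VV(1)\A$, and since $\HH(1)$ and $\VV(1)$ are projections (Lemma \ref{lem stanislaw}), a representation annihilates $\A\HH(1)\A$ (resp. $\A\VV(1)\A$) if and only if it kills $\HH(1)$ (resp. $\VV(1)$). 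Hence, under the identifications \eqref{identification equalities}, the domain and range of $X\dashind$ are exactly $\widehat{\HH(\A)}$ and $\widehat{\VV(\A)}$, matching those of $\widehat{\HH}$.

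Fix $\pi:\A\to\B(H)$ irreducible with $\pi(\HH(1))\neq0$ and form $X\otimes_\pi H$. The key object is the vector $1\otimes1\in X$; since $\langle 1\otimes1,1\otimes1\rangle_\A=\HH(1)$ is a projection, the map $U\xi:=(1\otimes1)\otimes_\pi\xi$ is an isometry of $\pi(\HH(1))H$ into $X\otimes_\pi H$. I would then show that $\im U$ is precisely the corner subspace $X\dashind(\pi)(\VV(1))(X\otimes_\pi H)$. This rests on the bimodule relations of Lemma \ref{pre Hilbert bimodule construction}(iii), together with $\HH(\VV(1))=\HH(1)$ and $\VV(\HH(1))=\VV(1)$, from which one derives the three identities $(1\otimes1)\HH(1)=1\otimes1$, $\VV(1)a\otimes b=1\otimes\HH(a)b$, and $v\otimes1=1\otimes\HH(v)$ for $v\in\VV(\A)$. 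The first gives $X\dashind(\pi)(\VV(1))U\xi=U\xi$, while the second rewrites an arbitrary $X\dashind(\pi)(\VV(1))(a\otimes b\otimes_\pi h)=(\VV(1)a\otimes b)\otimes_\pi h$ as $U\big(\pi(\HH(a)b)h\big)\in\im U$, so that the two subspaces coincide.

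With the corner subspace identified, the intertwining is immediate: for $v\in\VV(\A)$ the identity $v\otimes1=1\otimes\HH(v)=(1\otimes1)\HH(v)$ yields
$$
X\dashind(\pi)(v)U\xi=(v\otimes1)\otimes_\pi\xi=(1\otimes1)\otimes_\pi\pi(\HH(v))\xi=U\big(\pi(\HH(v))\xi\big),
$$
so that $U^*X\dashind(\pi)(\cdot)U|_{\VV(\A)}=\pi\circ\HH$ on $\pi(\HH(1))H$. Thus the compression of $X\dashind(\pi)$ to its $\VV(1)$-corner is unitarily equivalent, via $U$, to the representation $\pi\circ\HH$ of $\VV(\A)$ on $\pi(\HH(1))H$. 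Since $X\dashind$ preserves irreducibility and $X\dashind(\pi)(\VV(1))\neq0$, the standard spectral correspondence for the hereditary subalgebra $\VV(\A)$ (cf. \cite[Prop. 4.1.9]{Pedersen}) identifies $X\dashind(\pi)$ as the unique extension to $\A$ of this corner representation — which is exactly the description of $\widehat{\HH}(\pi)$ in Remark \ref{dual remarks}. Hence $X\dashind(\pi)\cong\widehat{\HH}(\pi)$ for every $\pi$ in the common domain, giving $X\dashind=\widehat{\HH}$.

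I expect the main obstacle to be the bookkeeping in the second step — reducing arbitrary tensors $a\otimes b\otimes_\pi h$ to the cyclic form $(1\otimes1)\otimes_\pi(\cdots)$ using only the relations of Lemma \ref{pre Hilbert bimodule construction}(iii) and the projection identities for $\VV(1)$ and $\HH(1)$ — rather than anything conceptually deep. Once $\im U$ is correctly matched with the $\VV(1)$-corner, the remaining assertions follow at once from the uniqueness built into the corner/spectrum correspondence.
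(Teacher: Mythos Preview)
Your proposal is correct and follows essentially the same route as the paper's proof: both arguments define the isometry $\xi\mapsto(1\otimes1)\otimes_\pi\xi$ on $\pi(\HH(1))H$, use the relations of Lemma~\ref{pre Hilbert bimodule construction}(iii) to identify its range with the $\VV(1)$-corner of $X\otimes_\pi H$, verify the intertwining $X\dashind(\pi)(v)U\xi=U\pi(\HH(v))\xi$ for $v\in\VV(\A)$, and conclude via the hereditary-subalgebra identification of Remark~\ref{dual remarks}. Your version is slightly more explicit about matching the domains at the outset and about invoking the corner--spectrum correspondence at the end, but the argument is the same.
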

\begin{proof}
Let $\pi:\A\to \B(H)$ be an irreducible representation with $\pi(\HH(1))\neq 0$. 
 For $(a\otimes b)\otimes_\pi h\in X\otimes_\pi H$, $a, b\in \A,\, h\in H$, using Lemma \ref{pre Hilbert bimodule construction} iii) we have  
\begin{align*}
X\dashind(\pi)(\VV(1))(a\otimes b)\otimes_\pi h&=\big(\VV(1)a\otimes b\big) \otimes_\pi h
=\big(\VV(1)a\VV(1)\otimes b\big) \otimes_\pi h
\\
&=(1\otimes \HH(a)b) \otimes_\pi h=
(1\otimes 1 )\otimes_\pi \pi(\HH(a)b) h. 
\end{align*}
Hence we see that  the space $H_0:=X\dashind(\pi)\big(\VV(1)\big) (X\otimes_\pi H)$  consists of  the vectors of the form $(1\otimes 1)\otimes_\pi h$, $h\in \pi(\HH(1))H$. Moreover,  for $h\in \pi(\HH(1))H$ we have
\begin{align*}
\|(1\otimes 1) \otimes_\pi h\|^2&=\langle (1\otimes 1) \otimes_\pi h, (1\otimes 1) \otimes_\pi h \rangle=\langle h,\pi(\langle 1\otimes 1,1\otimes 1\rangle_\A) h \rangle
\\
&= \langle h,\pi(\HH(1)) h \rangle
= \|h\|^2,
\end{align*}
and thus the mapping $(1\otimes 1)\otimes_{\pi} h\mapsto h$ is a unitary  $U$  from $H_0$ onto  $\pi(\HH(1))H$. For $a\in \VV(\A)$ we have
$$
 X\dashind(\pi)(a) (1\otimes 1)\otimes_{\pi} h= (a\otimes 1) \otimes_{\pi} h= (1\otimes \HH(a)) \otimes_{\pi} h=(1\otimes 1) \otimes_\pi \pi(\HH(a))h, 
$$ 
that is $X\dashind(\pi)(a)U^*h=U^*\pi(\HH(a))h$. 
 It follows that $U$ establishes unitary equivalence between $X\dashind(\pi):\VV(\A)\to \B(H_0)$
 and  $\pi\circ \HH:\VV(\A)\to \B(\pi(\HH(1))H)$. Hence $X\dashind=\widehat{\HH}$, cf. Remark \ref{dual remarks}.
\end{proof}
As $\widehat{\HH}=\widehat{\VV}^{-1}$, our preference  for $\widehat{\VV}$ in the sequel  is totally a subjective choice. 
\begin{thm}\label{interactions}
 Let $(\VV,\HH)$ a corner interaction and $\widehat{\VV}$ the partial homeomorphism dual to $\VV$.
\begin{itemize}
\item[i)]
If $\widehat{\VV}$ is  topologically
free, then   every representation of $C^*(A,\VV,\HH)$ which is faithful on $A$ is automatically faithful on  $C^*(A,\VV,\HH)$.
\item[ii)] If  $\widehat{\VV}$  is free, then $J \mapsto  \widehat{J \cap \A}$ is a lattice isomorphism
 between ideals in $C^*(A,\VV,\HH)$  and  open $\widehat{\VV}$-invariant sets in $\SA$.
 \item[iii)] If  $\widehat{\VV}$  is  topologically
free and minimal, then $C^*(A,\VV,\HH)$ is simple.
\end{itemize}
\end{thm}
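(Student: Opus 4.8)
The plan is to reduce all three assertions to Theorem \ref{main result} via the Hilbert bimodule picture already assembled in the preceding propositions. Indeed, Proposition \ref{prop on} furnishes a gauge-invariant isomorphism $C^*(A,\VV,\HH)\cong \A\rtimes_X \Z$ that restricts to the identity on $A$, where $X$ is the Hilbert bimodule associated to $(\VV,\HH)$, while Proposition \ref{coincidence of actions} identifies the induced partial homeomorphism as $X\dashind=\widehat{\HH}=\widehat{\VV}^{-1}$. Thus everything reduces to checking that the three dynamical hypotheses imposed on $\widehat{\VV}$ are inherited by its inverse $X\dashind$, after which Theorem \ref{main result} applies essentially verbatim to $\A\rtimes_X \Z$.

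The one genuine point to verify is therefore that topological freeness, freeness and minimality of a partial homeomorphism $\varphi$ of a space $M$ are each invariant under passing to $\varphi^{-1}$. For topological freeness I would observe that $x$ is a fixed point of $\varphi^n$ precisely when it is a fixed point of $(\varphi^{-1})^n=\varphi^{-n}$: if $\varphi^n(x)=x$ then $x$ lies in the range of $\varphi^n$, hence in the domain of $\varphi^{-n}$, and $\varphi^{-n}(x)=x$, and symmetrically. So the fixed-point sets of $\varphi^n$ and of $(\varphi^{-1})^n$ coincide for every $n$, and having empty interior is the same condition for both. For invariance of a set $V$, applying $\varphi^{-1}$ to the defining identity $\varphi(V\cap\Delta)=V\cap\varphi(\Delta)$ (with $\Delta$ the domain of $\varphi$) shows that $V$ is $\varphi$-invariant if and only if $\varphi^{-1}\big(V\cap\varphi(\Delta)\big)=V\cap\Delta$, i.e.\ if and only if $V$ is $\varphi^{-1}$-invariant. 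Consequently the collections of closed (and of open) invariant sets for $\varphi$ and for $\varphi^{-1}$ coincide, which immediately transfers both minimality (no nontrivial closed invariant sets) and freeness (topological freeness on every closed invariant set).

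Granting this, I would conclude as follows. Since $\widehat{\VV}$ is topologically free, free, or minimal exactly when $X\dashind=\widehat{\VV}^{-1}$ is, each hypothesis of the present theorem becomes the corresponding hypothesis of Theorem \ref{main result} for $\A\rtimes_X \Z$. For (i), a representation of $C^*(A,\VV,\HH)$ that is faithful on $A$ corresponds, via Proposition \ref{prop on}, to a covariant representation $(\pi,\pi_X)$ of $X$ with $\pi$ injective, that is, to a faithful covariant representation; Theorem \ref{main result}(i) then forces its integrated form to be faithful on $\A\rtimes_X \Z\cong C^*(A,\VV,\HH)$. For (ii), the lattice isomorphism $J\mapsto \widehat{J\cap A}$ of Theorem \ref{main result}(ii) is phrased in terms of open $X\dashind$-invariant sets, which by the previous paragraph are exactly the open $\widehat{\VV}$-invariant sets; since the isomorphism of Proposition \ref{prop on} is the identity on $A$, the correspondence $J\cap A$ is preserved and the statement transports directly. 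For (iii), simplicity of $\A\rtimes_X \Z$ given by Theorem \ref{main result}(iii) is exactly simplicity of $C^*(A,\VV,\HH)$.

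I expect no serious obstacle here: the analytic content of all three parts is already packaged in Theorem \ref{main result}, and the only thing needing care is the inversion-invariance of the dynamical conditions established above, together with the routine matching of the faithfulness statement in (i) and of the ideal lattice in (ii) across the isomorphism of Proposition \ref{prop on}.
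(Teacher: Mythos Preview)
Your proof is correct and follows essentially the same route as the paper, which simply says ``Combine Propositions \ref{prop on}, \ref{coincidence of actions} and Theorem \ref{main result}.'' The only difference is that you spell out the inversion-invariance of the dynamical conditions, a point the paper leaves implicit with the remark (just before the theorem) that the preference for $\widehat{\VV}$ over $\widehat{\HH}=\widehat{\VV}^{-1}$ is purely a matter of taste.
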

\begin{proof}
Combine  Propositions \ref{prop on}, \ref{coincidence of actions} and Theorem \ref{main result}.
\end{proof}
\begin{rem}
Our simplicity criterion (Theorem \ref{interactions} iii)) have an intersection with the criteria in \cite[Theorems 4.1, 4.6]{Szwajcar} only in the case of  a $C^*$-dynamical system $(\alpha, \LL)$ where $\alpha$ is an isomorphism from  $A$ onto a full corner $\alpha(1)A\alpha(1)$ in $A$, cf. Remark \ref{remark on szwajcar} and Corollary \ref{cor for Jurgen} below. In this case topological freeness implies that no power of $\alpha$ or $\LL$ is inner (i.e. implemented by an isometry in $A$).
\end{rem}

In general, one can deduce  from Propositions \ref{prop on}, \ref{coincidence of actions}, see   \cite[discussion before Theorem 2.5]{kwa}, that  open $\widehat{\VV}$-invariant sets in $\SA$ are in a one-to-one correspondence with gauge invariant ideals in $C^*(A,\VV,\HH)$. Therefore it is useful to have a convenient  description of the former.
\begin{lem}\label{invariant ideals for endomorophisms}
Let $I$ be an ideal in $A$. The following conditions are equivalent: 
\begin{itemize}
\item[i)] The set $\widehat{I}\subset \SA$ is $\widehat{\VV}$-invariant,
\item[ii)] $\VV(I)=\VV(1) I\VV(1)$,
\item[iii)] $\VV(I)\subset I$ and  $\HH(I)\subset I$.
\end{itemize} 
\end{lem}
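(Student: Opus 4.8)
The plan is to split the three-way equivalence into the purely algebraic part (ii) $\Leftrightarrow$ (iii), which I handle by direct manipulation of the conditional expectations $\E_{\VV}=\VV\circ\HH$ and $\E_{\HH}=\HH\circ\VV$, and the spectral part (i) $\Leftrightarrow$ (ii), which I handle through the corner/spectrum correspondence. For (iii) $\Rightarrow$ (ii): since $\VV(I)\subset\VV(\A)=\VV(1)\A\VV(1)$ and $\VV(I)\subset I$, I get $\VV(I)=\VV(1)\VV(I)\VV(1)\subset\VV(1)I\VV(1)$; conversely every $\VV(1)a\VV(1)=\E_{\VV}(a)=\VV(\HH(a))$ with $a\in I$ lies in $\VV(\HH(I))\subset\VV(I)$ because $\HH(I)\subset I$, so $\VV(1)I\VV(1)\subset\VV(I)$ and equality holds. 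For (ii) $\Rightarrow$ (iii): $\VV(I)=\VV(1)I\VV(1)\subset I$ gives $\VV(I)\subset I$ at once, while the interaction axiom $\HH\circ\VV\circ\HH=\HH$ in the form $\HH\circ\E_{\VV}=\HH$ yields $\HH(I)=\HH(\E_{\VV}(I))=\HH(\VV(I))=\E_{\HH}(I)=\HH(1)I\HH(1)\subset I$. Here I use the corner descriptions $\E_{\VV}(a)=\VV(1)a\VV(1)$ and $\E_{\HH}(a)=\HH(1)a\HH(1)$ from Proposition \ref{characterization of complete interactions} together with Lemma \ref{lem stanislaw}.

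For (i) $\Leftrightarrow$ (ii) I prefer to test $\widehat{\HH}$-invariance, which is equivalent to $\widehat{\VV}$-invariance since $\widehat{\HH}=\widehat{\VV}^{-1}$ and the invariance condition $\varphi(V\cap\Delta)=V\cap\varphi(\Delta)$ is unchanged under replacing $\varphi$ by $\varphi^{-1}$ (apply $\varphi^{-1}$ to both sides). The key input is the standard correspondence for the corners $\VV(\A)=\VV(1)\A\VV(1)$ and $\HH(\A)=\HH(1)\A\HH(1)$ underlying the identification preceding \eqref{identification equalities}: for an ideal $I$ in $\A$ the open set $\widehat{I}\cap\widehat{\VV(\A)}$ corresponds to the ideal $I\cap\VV(\A)=\VV(1)I\VV(1)$ of $\VV(\A)$, and $\widehat{I}\cap\widehat{\HH(\A)}$ corresponds to $\HH(1)I\HH(1)$ in $\HH(\A)$. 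Since $\widehat{\HH}$ is the partial homeomorphism induced by the isomorphism $\HH\colon\VV(\A)\to\HH(\A)$, it pulls the ideal $\HH(1)I\HH(1)$ of $\HH(\A)$ back to $\VV(\HH(1)I\HH(1))=\VV(\E_{\HH}(I))=\VV(I)$ in $\VV(\A)$, where the last step is axiom (i) in the form $\VV\circ\E_{\HH}=\VV$. Thus $\widehat{\HH}(\widehat{I}\cap\widehat{\HH(\A)})=\widehat{I}\cap\widehat{\VV(\A)}$ holds exactly when $\VV(I)=\VV(1)I\VV(1)$, i.e.\ precisely condition (ii).

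The step I expect to be the main obstacle is the translation between spectral invariance and the algebraic equality, namely verifying that $\widehat{I}\cap\widehat{\VV(\A)}$ genuinely corresponds to the ideal $\VV(1)I\VV(1)$ of the corner $\VV(\A)$. Concretely one must show that for $\pi\in\SA$ with $\pi(\VV(1))\neq 0$ one has $\pi(I)\neq 0$ iff $\pi(\VV(1)I\VV(1))\neq 0$; this reduces to a short irreducibility argument, since the common kernel of $\pi(I)$ and the range of the projection $\pi(\VV(1))$ are both $\pi(\A)$-invariant and hence trivial or all of $H$, forcing $\pi(\VV(1))\pi(I)\pi(\VV(1))\neq 0$. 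The remaining bookkeeping — that a $C^*$-isomorphism transports the open set of an ideal to the open set of its preimage, and that ideals of $\A$ biject with open subsets of $\SA$ via $I\mapsto\widehat{I}$ — is routine and can simply be cited, so that everything else collapses to the identities $\E_{\VV}=\VV\HH$, $\E_{\HH}=\HH\VV$, $\VV\E_{\HH}=\VV$, $\HH\E_{\VV}=\HH$ and the corner formulas already recorded.
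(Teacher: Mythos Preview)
Your proof is correct and follows essentially the same approach as the paper: the algebraic equivalence (ii)$\Leftrightarrow$(iii) is handled identically via the corner formulas $\E_{\VV}(a)=\VV(1)a\VV(1)$, $\E_{\HH}(a)=\HH(1)a\HH(1)$ and the identities $\VV\circ\E_{\HH}=\VV$, $\HH\circ\E_{\VV}=\HH$, while for (i)$\Leftrightarrow$(ii) you spell out in more detail what the paper compresses into the observation that (ii) reads as $\VV(I\cap\HH(\A))=I\cap\VV(\A)$, from which the equivalence with $\widehat{\VV}$-invariance is declared clear. Your added justification of the corner--spectrum correspondence (that $\widehat{I}\cap\widehat{\VV(\A)}$ corresponds to $\VV(1)I\VV(1)$) is a welcome clarification of a step the paper takes for granted.
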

\begin{proof}
Notice that $\VV(1)I\VV(1)=I\cap\VV(A)$ and $\HH(1)I\HH(1)=I\cap\HH(A)$. Hence $\VV(I)=\VV(\HH(1)I\HH(1))=\VV(I\cap \HH(A))$ and ii) reads as $\VV(I\cap \HH(A))=I\cap \VV(A)$. Now equivalence i)$\Leftrightarrow$ ii) is clear.
\\
ii)$\Rightarrow$ iii).  We have  $\VV(I)=\VV(1) I\VV(1)\subset I$  and  $\HH(I)=\HH(\VV(1)I\VV(1))=\HH(\VV(I))=\HH(1)I\HH(1)\subset I$.
\\
iii)$\Rightarrow$ ii). The inclusion $\VV(I)\subset I$  implies  $\VV(I)\subset \VV(1)I\VV(1)$ and $\HH(I)\subset I$ implies that $\VV(1)I\VV(1)=\VV(\HH(I))\subset \VV(I)$.
\end{proof}
\begin{cor}\label{cor for Jurgen}
Suppose $(\alpha, \LL)$ is a corner $C^*$-dynamical system. The partial homeomorphism $\widehat{\alpha}$ is minimal if and only if %$\alpha$ is minimal in the sense that 
there is no nontrivial ideal $I$ in $A$ such that $\alpha(I)\subset I$.
\end{cor}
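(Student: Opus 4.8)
The plan is to pass through Lemma \ref{invariant ideals for endomorophisms}, converting minimality of $\widehat\alpha$ into a statement about ideals of $\A$, and then to reconcile the two-sided invariance appearing there with the single inclusion $\alpha(I)\subseteq I$ in the statement. First I would note that, since $\widehat\alpha$ restricts to a homeomorphism of its domain $\widehat{\VV(\A)}$ onto its range $\widehat{\HH(\A)}$, the defining equality of invariance $\widehat\alpha(V\cap \widehat{\VV(\A)})=V\cap \widehat{\HH(\A)}$ is preserved under passing to complements; hence a set is closed and $\widehat\alpha$-invariant precisely when its complement is open and $\widehat\alpha$-invariant. Consequently $\widehat\alpha$ is \emph{minimal} if and only if the only open $\widehat\alpha$-invariant subsets of $\SA$ are $\emptyset$ and $\SA$.

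Combining the inclusion-preserving bijection $I\mapsto \widehat{I}$ between ideals of $\A$ and open subsets of $\SA$ with Lemma \ref{invariant ideals for endomorophisms}, the open $\widehat\alpha$-invariant sets are exactly the $\widehat{I}$ for ideals $I$ with $\alpha(I)\subseteq I$ and $\LL(I)\subseteq I$. Thus minimality is equivalent to the nonexistence of a nontrivial ideal satisfying \emph{both} inclusions, and the only remaining task is to replace these two inclusions by the single one $\alpha(I)\subseteq I$. One implication is immediate: a nontrivial ideal invariant under both $\alpha$ and $\LL$ is in particular $\alpha$-invariant, so absence of nontrivial $\alpha$-invariant ideals forces minimality. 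For the converse I would try to promote $\alpha(I)\subseteq I$ to full invariance. Using $\E=\alpha\circ\LL$, the completeness identity $\E(a)=\alpha(1)a\alpha(1)$, and $\LL\circ\E=\LL$, for $a\in I$ one has $\LL(a)=\LL(\E(a))$ with $\E(a)=\alpha(1)a\alpha(1)\in I\cap\alpha(\A)$; since $\LL$ inverts $\alpha$ on $\alpha(\A)$, this reduces $\LL(I)\subseteq I$ to the assertion that $\alpha(b)\in I$ implies $b\in I$ whenever $b\in(\ker\alpha)^{\bot}=\LL(\A)$.

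This backward implication is the step I expect to be the real obstacle: $\alpha(I)\subseteq I$ is a purely forward condition and does not in general entail it. I anticipate having to invoke that $\alpha$ is injective — equivalently $\ker\alpha=0$, $\LL(1)=1$, which is exactly the situation of an isomorphism onto a full corner singled out before the statement — so that $\LL$ is globally the inverse of $\alpha$ and the forward and backward conditions coincide; failing that, I would argue only at the level of existence, saturating a given nontrivial $\alpha$-invariant ideal under the transfer operator $\LL$ to manufacture a nontrivial ideal invariant under both maps. Either route, i.e. collapsing the two inclusions of Lemma \ref{invariant ideals for endomorophisms} to the one written in the statement, is where I would concentrate the care.
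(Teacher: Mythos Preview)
Your plan coincides with the paper's: reduce minimality via Lemma \ref{invariant ideals for endomorophisms} to the nonexistence of nontrivial ideals invariant under both $\alpha$ and $\LL$, note that one direction is immediate, and for the other saturate a given $\alpha$-invariant $I$ under $\LL$. The paper takes exactly your second route, setting $J$ equal to the closure of the sums $\sum_{k=0}^{n}\LL^{k}(a_k)$ with $a_k\in I$, and asserting (``by induction on $n$'') that $J$ is a proper ideal satisfying $\alpha(J)\subset J$ and $\LL(J)\subset J$.

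Your hesitation at precisely this point is justified: the saturation need not be proper, and the corollary as written is false. Take $A=\C^{2}$, $\alpha(a,b)=(b,0)$, $\LL(a,b)=(0,a)$; one checks that $(\alpha,\LL)$ is a complete $C^*$-dynamical system. Here $\widehat A$ has two points, $\widehat\alpha$ sends one to the other, and neither singleton is invariant, so $\widehat\alpha$ is minimal. Yet $I=\C\oplus 0=\ker\alpha$ is a nontrivial ideal with $\alpha(I)=0\subset I$, while $\LL(I)=0\oplus\C$ and hence $J=I+\LL(I)=A$. Thus the paper's inductive claim that $1\notin J$ fails in general, and so does the ``only if'' direction. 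The saturation argument \emph{does} go through under an additional hypothesis such as fullness of the projection $\alpha(1)$: since $\alpha(\overline{J_n})\subset\overline{J_{n-1}}$, the equality $\overline{J_n}=A$ forces $\alpha(1)\in\overline{J_{n-1}}$, and fullness then gives $\overline{J_{n-1}}=A$, so by downward induction $I=A$, a contradiction. Your instinct that an extra assumption (injectivity, full corner) is required was therefore on target.
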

\begin{proof} The  if part follows immediately from  Lemma \ref{invariant ideals for endomorophisms}. If we suppose  that  $\alpha(I)\subset I$ and $\alpha(I)\neq \alpha(1)I\alpha(1)$ for a certain nontrivial ideal $I$ in $A$, then one sees (by induction on $n$) that the closure $J$ of elements of the form $\sum_{k=0}^n \mathcal{L}^k(a_k)$,  $a_k \in I$, $k=0,...,n$, $n\in \N$, is a nontrivial ideal in $A$ (it does not contain the unit) such that $\alpha(J)\subset J$ and $\mathcal{L}(J)\subset J$. Hence by Lemma \ref{invariant ideals for endomorophisms}, $\widehat{\alpha}$ is not minimal.
\end{proof}
\subsection{$K$-theory} 
We retain the notation from page \pageref{Katsura Pimsner voiculescu sequence} with the  additional assumption that   $X$ is the Hilbert bimodule associated to a corner interaction $(\VV,\HH)$. In particular, ${_A\langle X,X\rangle}=A\VV(1)A$.

\begin{lem}\label{probably non-standard proposition}
The  following diagram commutes and the horizontal map is an isomorphism
$$
\begin{xy}
\xymatrix{
      K_*(\VV(A)) \ar[rr]^{\iota_*} \ar[dr]_{(\iota_{22}\circ \HH)_*} &   &  \ar[dl]^{(\iota_{11}\circ \phi)_*} K_*(A\VV(1)A)
      \\
       & K_*(D_{X}) & }
  \end{xy} 
 $$ 
\end{lem}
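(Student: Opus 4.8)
The plan is to establish the two assertions separately. For the \emph{isomorphism} of the horizontal arrow I would use that, by Lemma~\ref{lem stanislaw} and Proposition~\ref{characterization of complete interactions}, $\VV(A)=\VV(1)A\VV(1)$ is the corner of $A$ cut out by the projection $\VV(1)$, while by Remark~\ref{exel's imprimitivity bimodule} the ideal $A\VV(1)A={_A\langle} X,X\rangle$ is the ideal of $A$ generated by $\VV(1)$. A short computation with $\VV(1)^{3}=\VV(1)$ gives
\[
\VV(1)\,\big(A\VV(1)A\big)\,\VV(1)=\VV(1)A\VV(1)=\VV(A),\qquad \big(A\VV(1)A\big)\,\VV(1)\,\big(A\VV(1)A\big)=A\VV(1)A,
\]
so that $\VV(A)$ is a \emph{full} corner of $A\VV(1)A$ with full projection $\VV(1)$. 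Hence $\iota\colon\VV(A)\hookrightarrow A\VV(1)A$ is a full corner embedding, a Morita equivalence, and therefore $\iota_*$ is an isomorphism.

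For the \emph{commutativity} I would identify the two composites $\VV(A)\to D_X$, namely $\rho_1:=\iota_{11}\circ\phi\circ\iota$ (recalling that for the Hilbert bimodule $X$ the left action restricts to an isomorphism $\phi\colon{_A\langle} X,X\rangle\to\K(X)$ with $\phi({_A\langle} x,y\rangle)=\theta_{x,y}$) and $\rho_2:=\iota_{22}\circ\HH$, and prove $(\rho_1)_*=(\rho_2)_*$. The device is the element $w\in D_X$ of the $(1,2)$-corner given by the vector $1\otimes1\in X$. Using Lemma~\ref{pre Hilbert bimodule construction} and the inner-product formulas one finds $w^*w=\iota_{22}(\langle1\otimes1,1\otimes1\rangle_A)=\iota_{22}(\HH(1))$ and $ww^*=\iota_{11}({_A\langle}1\otimes1,1\otimes1\rangle)=\iota_{11}(\phi(\VV(1)))$, so $w$ is a partial isometry and, by Lemma~\ref{lem stanislaw}, $w^*w=\rho_2(\VV(1))$ and $ww^*=\rho_1(\VV(1))$ are exactly the units of the images of $\rho_2$ and $\rho_1$. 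Multiplying out in the linking algebra and simplifying with the relations $ac\otimes b=a\otimes\HH(c)b$, $a\otimes cb=a\VV(c)\otimes b$ of Lemma~\ref{pre Hilbert bimodule construction}(iii), one then checks the intertwining identities $w^*\rho_1(b)w=\iota_{22}(\langle b^*\otimes1,1\otimes1\rangle_A)=\rho_2(b)$ and, using $\VV(\HH(b))=b$ on $\VV(A)$, $w\rho_2(b)w^*=\iota_{11}(\phi({_A\langle}1\otimes\HH(b),1\otimes1\rangle))=\rho_1(b)$ for all $b\in\VV(A)$.

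Since $\VV(A)$ is unital, every $K_0$-class is represented by a projection $p$ over $\VV(A)$; applying $w$ entrywise, $w\rho_2(p)$ is a partial isometry with range projection $\rho_1(p)$ and, because $w^*w\,\rho_2(b)=\rho_2(b)$ for $b\in\VV(A)$, source projection $\rho_2(p)$. Thus $[\rho_1(p)]=[\rho_2(p)]$ in $K_0(D_X)$, and the analogous argument for unitaries settles $K_1$; hence $(\rho_1)_*=(\rho_2)_*$, which is the asserted commutativity.

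The main obstacle is the bookkeeping of the penultimate paragraph: one must track the two inner products and the identification $\phi({_A\langle} x,y\rangle)=\theta_{x,y}$ while multiplying $w^*\rho_1(b)w$ and $w\rho_2(b)w^*$ out in $D_X$, and must verify that the source and range projections of $w$ really are the units of the images of $\rho_2$ and $\rho_1$, so that $w$ implements a genuine Murray--von Neumann equivalence of the two homomorphisms rather than a mere intertwining on a subspace; the remaining steps are routine.
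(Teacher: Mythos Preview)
Your argument is correct. For the isomorphism of $\iota_*$ you invoke exactly the full-corner fact the paper uses. For the commutativity your route is a close cousin of the paper's, but packaged differently, and the difference is worth recording.

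The paper constructs a single $*$-homomorphism
\[
\Phi\colon M_2(\HH(A))\longrightarrow D_X,\qquad
\Phi\begin{pmatrix} a_{11}&a_{12}\\ a_{21}&a_{22}\end{pmatrix}
=\begin{pmatrix}\phi(\VV(a_{11})) & 1\otimes a_{12}\\ \flat(1\otimes a_{21}^*)& a_{22}\end{pmatrix},
\]
verifies multiplicativity by the same bimodule identities you use, and then obtains commutativity from the standard fact that the two corner embeddings $\iota_{11},\iota_{22}\colon B\to M_2(B)$ coincide on $K_*$, together with $\Phi\circ\iota_{22}\circ\HH=\iota_{22}\circ\HH$ on $\VV(A)$. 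Your partial isometry $w$ is precisely $\Phi$ applied to the off-diagonal matrix unit, and your intertwining relations $w\rho_2(b)w^*=\rho_1(b)$, $w^*\rho_1(b)w=\rho_2(b)$ amount to special cases of the multiplicativity of $\Phi$. So the underlying computation is the same; what you gain is that you never need to check that $\Phi$ is a homomorphism on all of $M_2(\HH(A))$, only the two conjugation identities on $\VV(A)$. What the paper's packaging buys is that the $K_1$ case requires no extra words: once $\Phi$ is a $*$-homomorphism, functoriality handles both $K$-groups simultaneously, whereas in your version the phrase ``the analogous argument for unitaries settles $K_1$'' hides the (standard but not entirely trivial) step of promoting the Murray--von~Neumann equivalence implemented by $w$ to a unitary conjugation in $M_2(\widetilde{D_X})$.
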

\begin{proof}
Since $\VV(A)$ is a full corner in $A\VV(1)A$ it is known that the inclusion $\iota:\VV(A)\to A\VV(1)A$ yields isomorphisms of $K$-groups, cf. e.g. \cite[Proposition B.5]{katsura}. %
We claim  that the map
$$
M_2(\HH(A))\ni \left(\begin{matrix} 
a_{11} & a_{12}
\\
a_{21} & a_{22}
\end{matrix}\right) \stackrel{\Phi}{\longmapsto} \left(\begin{matrix} 
\phi(\VV(a_{11})) & 1\otimes a_{12}
\\
\flat(1\otimes a_{21}^*) & a_{22}
\end{matrix}\right) \in D_X,
$$
where $\flat:X \to  \X$ is the canonical antilinear isomorphism, is a homomorphism of $C^*$-algebras. Plainly, it is  linear, $*$-preserving, and  the reader easily checks that  $\Phi(ab)=\Phi(a)\Phi(b)$, for $a=[a_{ij}], b=[b_{ij}]\in M_2(\HH(A))$,   using the following calculations
\begin{align*}
(1\otimes a_{12})\cdot \flat(1\otimes b_{21}^*)x\otimes y&=\Theta_{(1\otimes a_{12}),(1\otimes b_{21}^*)}x\otimes y=1\otimes a_{12}b_{21}\HH(x)y
\\
&=\VV(a_{12}b_{21})\VV(\HH(x))\otimes y=\VV(a_{12})\VV(b_{21}) x\otimes y
\\
&=\phi(\VV(a_{12} b_{21})) (x\otimes y),
\end{align*}
$$
\phi(\VV(a_{11}))(1\otimes b_{12})=\VV(a_{11})\otimes  b_{12}=1\otimes a_{11}b_{12},
$$
$$
\flat(1\otimes a_{21}^*)\cdot (1\otimes b_{12})=\langle 1\otimes a_{21}^*,1\otimes b_{12}\rangle_{A}= a_{21}\HH(1)b_{12}=a_{21}b_{12}.
$$
This shows our claim. The following diagram commutes (it commutes on the level of $C^*$-algebras)
$$
\begin{xy}
\xymatrix{
      K_*(\VV(A)) \ar[rr]^{\iota_*}  \ar[d]_{(\iota_{11}\circ \HH)_*} &   &  \ar[d]^{(\iota_{11}\circ \phi)_*} K_*(A\VV(1)A) %\ar[rrd]^{(\iota_{11}\circ \phi)_*}
      \\
     K_*( M_2(\HH(A))) \ar[rr]^{\Phi_*}&  & K_*(D_{X})}
  \end{xy}. 
 $$ 
However, since for any $C^*$-algebra $B$ the homomorphisms $\iota_{ii}:B\to M_2(B)$,  $i=1,2$, induce the same mappings on the level of $K$-theory,  the mappings $(\iota_{11}\circ \HH)_*, (\iota_{22}\circ \HH)_*:K_*(\VV(A))\to K_*(M_2(\HH(A)))$ coincide. Moreover, by the form of $\Phi$ we see that  $\Phi\circ\iota_{22}\circ \HH=\iota_{22}\circ \HH$ on $\VV(A)$. Hence
 $$
 (\iota_{11}\circ \phi)_* \circ\iota_* = \Phi_*\circ (\iota_{11}\circ \HH)_*=\Phi_*\circ (\iota_{22}\circ \HH)_*=(\iota_{22}\circ \HH)_*.
 $$
\end{proof}
Using the above lemma we see that in  sequence \eqref{Katsura Pimsner voiculescu sequence} we may replace  $\K_*({_A\langle X,X\rangle})=K_*(A\VV(1)A)$ with $K_*(\VV(A))$ and then $X_*$ turns into $(\iota_{22})_*^{-1}\circ (\iota_{22}\circ \HH)_*= \HH_*$. Hence we get the following version of Pimsner-Voiculescu exact sequence, cf. \cite{Paschke1}, \cite{Rordam}.
\begin{thm}\label{Voicu-Pimsner for interacts} For any corner interaction $(\VV,\HH)$ we have the following exact sequence  
$$
\begin{xy}
\xymatrix{
      K_0(\VV(A)) \ar[r]_{\iota_*- \HH_*} & K_0(A) \ar[r]_{(i_A)_*\,\,\,\,\,\,\,\,\,}  &   \ar[d]  K_0(C^*(A,\VV,\HH))
             \\
   K_1(C^*(A,\VV,\HH)) \ar[u]  &  K_1(A)  \ar[l]_{\,\,\,\,\,\,\,\,\,\,\,\,(i_A)_*}  &   \ar[l]_{\iota_*- \HH_*}  K_1(\VV(A))
              } 
  \end{xy} .
$$
\end{thm}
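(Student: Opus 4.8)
The plan is to read off the sequence from Katsura's Pimsner-Voiculescu sequence \eqref{Katsura Pimsner voiculescu sequence}, specialized to the Hilbert bimodule $X$ associated with $(\VV,\HH)$, by transporting the three groups and the abstract morphisms through identifications already at hand. First I would use Proposition \ref{prop on} to replace $\A\rtimes_X\Z$ by $C^*(A,\VV,\HH)$, and recall from the preceding discussion that ${_A\langle X,X\rangle}=A\VV(1)A$, so that the left-hand groups of \eqref{Katsura Pimsner voiculescu sequence} are $K_*(A\VV(1)A)$. Since the isomorphism of Proposition \ref{prop on} is gauge-invariant, the maps $(i_A)_*$ and the two boundary morphisms of \eqref{Katsura Pimsner voiculescu sequence} persist unchanged; the only work is to rewrite the single arrow $\iota_*-(X_*\circ\phi_*)\colon K_*(A\VV(1)A)\to K_*(A)$ in terms of $\VV(A)$ and $\HH$.

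The engine is Lemma \ref{probably non-standard proposition}, whose horizontal arrow is the isomorphism $\iota_*\colon K_*(\VV(A))\to K_*(A\VV(1)A)$ induced by the inclusion of the full corner $\VV(A)$ into $A\VV(1)A$. Precomposing every occurrence of $K_*(A\VV(1)A)$ in \eqref{Katsura Pimsner voiculescu sequence} with this isomorphism replaces that node by $K_*(\VV(A))$ and, since precomposition at a single node of an exact sequence by an isomorphism again yields an exact sequence, exactness is preserved. Under this precomposition the inclusion part of the arrow becomes the composite $\VV(A)\hookrightarrow A\VV(1)A\hookrightarrow A$, that is, the inclusion $\VV(A)\hookrightarrow A$, which furnishes the term $\iota_*$ in the asserted map.

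For the second term I would invoke the commuting triangle of Lemma \ref{probably non-standard proposition} together with the defining formula $X_*=(\iota_{22})_*^{-1}\circ(\iota_{11})_*$. Commutativity reads $(\iota_{11}\circ\phi)_*\circ\iota_*=(\iota_{22}\circ\HH)_*$ as maps $K_*(\VV(A))\to K_*(D_X)$, whence $X_*\circ\phi_*\circ\iota_*=(\iota_{22})_*^{-1}\circ(\iota_{11}\circ\phi)_*\circ\iota_*=(\iota_{22})_*^{-1}\circ(\iota_{22})_*\circ\HH_*=\HH_*$, where $\HH_*$ is induced by $\HH\colon\VV(A)\to\HH(A)\subset A$. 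The transported arrow is therefore exactly $\iota_*-\HH_*$, and assembling the two halves together with the unchanged $(i_A)_*$ and boundary maps gives the stated six-term sequence.

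All of this is routine bookkeeping once Lemma \ref{probably non-standard proposition} is available, so the theorem is essentially its corollary. The one place that deserves care---and which I expect to be the main, if not deep, obstacle---is the second step: making sure Katsura's $D_X$-mediated morphism $X_*\circ\phi_*$ is correctly reconciled with the concrete completely positive map $\HH$, and that the two inclusions both denoted $\iota$ (that of $A\VV(1)A$ into $A$ in \eqref{Katsura Pimsner voiculescu sequence} and that of $\VV(A)$ into $A\VV(1)A$ in Lemma \ref{probably non-standard proposition}) are not conflated when forming the composites.
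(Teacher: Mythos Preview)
Your proposal is correct and follows exactly the paper's approach: the paper's proof is a single sentence stating that Lemma \ref{probably non-standard proposition} allows one to replace $K_*(A\VV(1)A)$ by $K_*(\VV(A))$ in \eqref{Katsura Pimsner voiculescu sequence}, whereupon $X_*\circ\phi_*$ becomes $(\iota_{22})_*^{-1}\circ(\iota_{22}\circ\HH)_*=\HH_*$. You have simply unpacked this sentence carefully, including the distinction between the two inclusions called $\iota$; there is no substantive difference.
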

\section{Graph $C^*$-algebras via interactions}\label{Cuntz-Krieger algebras section}

 In this section we introduce and study  properties of  graph interactions. We show that Theorem \ref{interactions} applied to graph interactions is equivalent to the  Cuntz-Krieger uniqueness theorem and its consequences. We  use  Theorem \ref{Voicu-Pimsner for interacts} to calculate $K$-theory for graph algebras straight from the dynamics on their AF-cores.

\subsection{Graph $C^*$-algebra  $C^*(E)$ and its AF-core}\label{graph core subsection}
 Throughout we let $E = (E^0,E^1, r, s)$ to be  a fixed finite  directed graph. Thus  $E^0$ is a set of vertices, $E^1$ is a set of edges,    $r,s:E^{1}\to E^0$ are range, source maps,  and we assume that both sets $E_0$, $E^1$ are finite. We write $E^{n}$, $n>0$, for the set of paths $\mu= \mu_1\dots\mu_n$,  $\mu_i\in E^{1}$, $r(\mu_i)=s(\mu_{i+1})$, $i=1,...,n-1$, of length $n$. The maps $r$, $s$ naturally extend to $E^{n}$, so that $(E^0,E^n,s,r)$ is the graph, and $s$ extends to the  set
$
E^{\infty}$ 
of infinite paths 
$\mu = \mu_1\mu_2\mu_3...\,$.  We also put $s(v)=r(v)=v$ for $v\in E^0$. The elements of $E^0_{sinks}:=E^0\setminus s(E^1)$  and respectively  $E^0_{sources}:=E^0\setminus r(E^1)$ are called sinks and sources. We also consider sets $
E^n_{sinks}=\{\mu\in E^n: r(\mu) \in E^0_{sinks}\}$, $n\in \N$.

We adhere to conventions of  \cite{kum-pask-rae}, \cite{bprs}.
In our setting a \emph{Cuntz-Krieger $E$-family} compose of non-zero pair-wise orthogonal projections  $\{P_v: v\in E^0\}$   and   partial isometries $\{S_{e}:e\in E^1\}$  satisfying 
$$ 
S_e^* S_e=P_{r(e)}\quad\textrm{and}\quad  P_v = \sum_{e \in s^{-1}(v)} S_eS_e^*\qquad \textrm{for all}\quad v\in s(E^1),\, e\in E^1.
$$
Having such a family we put $S_\mu := S_{\mu_1}S_{\mu_2}\dotsm
S_{\mu_n}$ for  $\mu=\mu_1...\mu_n$ ($S_\mu\neq 0 \Rightarrow \mu\in E^n$)    and  $S_v:=P_v$ for $v\in E^0$. The above Cuntz-Krieger relations extend to operators $S_\mu$, see \cite[Lemma 1.1]{kum-pask-rae}, as follows
%\begin{equation}\label{standard relations for E}
$$
S^*_\nu S_\mu=
\begin{cases} 
S_{\mu'}, & \textrm{if } \mu=\nu\mu',\,\, \mu'\notin E^0, \\
  S^*_{\nu'} & \textrm{if } \nu=\mu\nu', \,\,\nu'\notin E^0, \\
  0 & \textrm{otherwise. } 
\end{cases} 
%\end{equation}
$$
In particular, $C^*(\{P_v: v\in E^0\} \cup \{S_{e}:e\in E^1\})$ is the closure of the linear span of elements  $S_\mu S^*_\nu$, $\mu\in E^n$, $\nu\in E^m$, $n,m \in \N$.

 The \emph{graph $C^*$-algebra}   $C^*(E)$  of $E$ is   a universal $C^*$-algebra generated by  a universal Cuntz-Krieger $E$-family $\{s_e:e\in E^1\}$, $\{p_v: v\in E^0\}$. It  is equipped with the natural \emph{circle gauge action} 
$\gamma:\T \to \Aut C^*(E)$ established by relations $\gamma_\lambda(p_v)=p_v$,  $\gamma_\lambda(s_e)=\lambda s_e$,  for $v\in E^0$, $e\in E^1$, $\lambda\in \T$.
 The   fixed point $C^*$-algebra for $\gamma$ is called the \emph{core}. It is an AF-algebra  of the form
$$
\FF_E:=\clsp\left\{ s_\mu s^*_\nu: \mu,\nu\in E^n, \ n=0,\,1,\,\dotsc\right\}.
$$
 We  recall the   standard  Bratteli diagram for $\FF_E$. For each vertex $v$ and $N\in \N$ we set
$$
\FF_N(v):=\spane\{ s_\mu s^*_\nu: \mu, \nu \in E^N,\,\, r(\mu)=r(\nu)=v \},
$$
%\end{equation}
which is a simple $I_n$  factor with $n=|\{\mu \in E^N: r(\mu)=v\}|$   (if $n=0$  we put $\FF_N(v):=\{0\}$).  The spaces 
$$
\FF_N:=\Big(\oplus_{\, v\notin E^0_{sinks}} \FF_N(v)\Big)\oplus \Big(\oplus_{\,w\in E^{0}_{sinks}} \oplus_{i=0}^N \FF_i(w)\Big),\qquad N\in \N,
$$
%\end{equation}
 form an increasing family of finite-dimensional algebras, cf. e.g. \cite{bprs}, and 
\begin{equation}\label{inductive limit}
\FF_E=\overline{\bigcup_{N\in \N} \FF_N}
.
\end{equation}
We  denote by $\Lambda(E)$ the corresponding Bratteli diagram for $\FF_E$. If $E$ has no sinks we can view  $\Lambda(E)$ as an infinite vertical concatenation of  $E$: 
  on the $n$-th level we have the vertices $r(E^n)$, $n\in \N$,  and multiplicities are given by the number of edges with corresponding  endings and sources. If  $E$ has  sinks,  one has to attach to every sink on each level an infinite tail, so  on the $n$-th level of $\Lambda(E)$  we have $r(E^n)\cup \bigcup_{k=0}^{N-1}\{v^{(k)}: v \in r(E^{k}_{sinks})\}$ and each $v^{(k)}$ descends to $v^{(k)}$ with multiplicity one.   
  We adopt the convention that if $V$ is a subset of $E^0$ we treat it as a full subgraph of $E$ and $\Lambda(V)$ stands for the corresponding Bratteli diagram  for $\FF_V$. In particular, if $V$ is  \emph{hereditary}, i.e. $s(e)\in V\Longrightarrow r(e)\in V$ for all $e\in E^1$, and  \emph{saturated}, i.e.  every vertex  which feeds into $V$ and only $V$ is in $V$, then  the subdiagram $\Lambda(V)$ of $\Lambda(E)$ yields an ideal in $\FF_E$ which is naturally identified with $\FF_V$.  In general, viewing $\Lambda(E)$ as an infinite directed graph the hereditary and saturated subgraphs (subdiagrams) of $\Lambda(E)$ correspond to ideals in $\FF_E$, see \cite[3.3]{Bratteli}.

 \subsection{Interactions arising from graphs}
  %Let us fix a Cuntz-Krieger $E$-family $\{P_v: v\in E_0\}$, $\{S_{e}:e\in E_1\}$. 
For each vertex $v\in E^0$  we let 
 $
 n_v:=|r^{-1}(v)|
 $
 be the number of the edges received by $v$. We define an operator $s$ in $C^*(E)$ as the sum of the partial isometries $\{s_{e}:e\in E^1\}$  "averaged" on the spaces  corresponding to projections $\{p_{v}:v\in r(E^0)\}$ that are not sources:
\begin{equation}\label{partial isometry for graph}
s:= \sum\limits_{e\in E^1 }\frac{1}{ \sqrt{n_{r(e)}}}\,\, s_e=\sum\limits_{v\in r(E^1)}\frac{1}{ \sqrt{n_{v}}} \sum\limits_{e\in r^{-1}(v)}\,\, s_e.
\end{equation}
Since $
s^*s=\sum_{v \in r(E^1)}p_v$ is a projection the operator $s$ is   a partial isometry. It  is an isometry iff $E$ has no sources. We use $s$ to define  
\begin{equation}\label{dS}
\VV(a) := s a s^*,\qquad \HH(a):=s^*a s, \qquad a\in C^*(E).
\end{equation}
Plainly, $(\VV,\HH)$ is a corner interaction over $C^*(E)$. Moreover, one  sees that     $\VV$ and  $\HH$ are unique bounded linear maps on $C^*(E)$ satisfying the following formulas
\begin{equation}\label{endomorphism action on F_A}
 \VV \Big(s_{\mu} s^*_{\nu}\Big)=\begin{cases} \frac{1}{ \sqrt{n_{s(\mu)}n_{s(\nu)}}} \sum\limits_{e,f\in E^1} s_{e\mu} s^*_{f\nu}, &  n_{s(\mu)}n_{s(\nu)}\neq 0, \\
 0 , &  n_{s(\mu)}n_{s(\nu)}= 0,  
 \end{cases}
\end{equation}
\begin{equation}\label{transfer action on F_A}
\HH \Big(s_{e\mu} s^*_{f\nu}\Big)= \frac{1}{ \sqrt{n_{s(\mu)}n_{s(\nu)}}} \,\, s_{\mu} s^*_{\nu},\,\,\textrm{  }  
\HH \Big(p_v\Big)=\begin{cases} 
\sum\limits_{e\in s^{-1}(v)}\frac{p_{r(e)}}{n_{r(e)}}, &  v\notin E^0_{sinks}, \\
 0 , & v\in E^0_{sinks},
 \end{cases}
\end{equation}
 where $\mu\in E^{n}$, $\nu\in E^{m}$, $n,m\in \N$, $e,f\in E^1$, $v\in E^0$. It follows that $\VV$ and $\HH$ preserve the core  algebra $\FF_E$. Hence $(\VV,\HH)$ defines a corner interaction over $\FF_E$.  We note, however,  that $\VV$ hardly ever preserves the canonical diagonal algebra $\D_E:=\clsp\left\{ s_\mu s^*_\mu: \mu\in E^n, n\in \N\right\}\subset \FF_E$. 
\begin{defn}
We call the pair $(\VV,\HH)$ of continuous linear maps on $\FF_E$ satisfying \eqref{endomorphism action on F_A}, \eqref{transfer action on F_A}  a  \emph{(corner) interaction of the graph $E$} or simply a \emph{graph interaction}.
\end{defn}

The following  statement is one of the facts justifying the above definition. 
\begin{prop}\label{thm for Huef and Raeburn}
We have  a one-to-one correspondence between Cuntz-Krieger $E$-families  $\{P_v: v\in E^0\}$, $\{S_{e}:e\in E^1\}$ for $E$ and faithful covariant representations $(\pi,S)$ of the graph interaction $(\VV,\HH)$. It is given by the relations 
$$
S= \sum\limits_{e\in E_1 }\frac{1}{ \sqrt{n_{r(e)}}} S_e, \qquad P_v=\pi(p_{v}),\qquad   S_e= \sqrt{n_{r(e)}} \pi(s_e s_e^*)S. %\qquad v\in E^0,e\in E^1, 
$$ 
 In particular, we have a  gauge-invariant isomorphism
$
C^*(E)\cong C^*(\FF_E,\VV,\HH)$.
\end{prop}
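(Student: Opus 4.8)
The plan is to realise the correspondence through the universal properties of the two algebras, using a handful of explicit identities in $C^*(E)$ that are all immediate from the Cuntz-Krieger relations \eqref{C-K2} and the definition \eqref{partial isometry for graph} of $s$. Concretely, I would first record that $s^*s_e=\frac{1}{\sqrt{n_{r(e)}}}p_{r(e)}$, whence $s_es_e^*s=\frac{1}{\sqrt{n_{r(e)}}}s_e$; that $\VV(1)=ss^*$ and $\HH(1)=s^*s=\sum_{v\in r(E^1)}p_v$; and, reading off \eqref{transfer action on F_A}, that $\HH(s_es_e^*)=\frac{1}{n_{r(e)}}p_{r(e)}$. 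These few formulas drive the whole argument.

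For one direction, given a faithful covariant representation $(\pi,S)$ of $(\VV,\HH)$ I set $P_v=\pi(p_v)$ and $S_e=\sqrt{n_{r(e)}}\,\pi(s_es_e^*)S$ and verify the Cuntz-Krieger relations using covariance alone: from $S^*\pi(a)S=\pi(\HH(a))$ one gets $S_e^*S_e=n_{r(e)}\pi(\HH(s_es_e^*))=P_{r(e)}$, while $SS^*=\pi(\VV(1))=\pi(ss^*)$ combined with the identities above gives $S_eS_e^*=\pi(s_es_e^*)$, so that $\sum_{e\in s^{-1}(v)}S_eS_e^*=\pi(p_v)=P_v$ for $v\in s(E^1)$; faithfulness of $\pi$ forces $P_v\neq0$. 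Conversely a Cuntz-Krieger $E$-family extends, by universality of $C^*(E)$, to a representation $\rho$, and I would put $\pi=\rho|_{\FF_E}$ and $S=\rho(s)=\sum_e\frac{1}{\sqrt{n_{r(e)}}}S_e$, noting that $(\pi,S)$ is covariant since $\VV,\HH$ act by conjugation by $s$. The one genuinely non-formal input here is that $\pi$ is faithful: as $\FF_E=\overline{\bigcup_N\FF_N}$ is AF (see \eqref{inductive limit}) and every nonzero ideal meets some $\FF_N$, it suffices that $\rho$ be injective on each finite-dimensional block, which holds because each matrix unit $s_\mu s_\nu^*$ is sent to $S_\mu S_\nu^*\neq0$ (since $S_\mu^*S_\mu=P_{r(\mu)}\neq0$).

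To see the assignments are mutually inverse, the composite starting from a family is trivial on the generators $p_v,s_e$ by means of $s_es_e^*s=\frac1{\sqrt{n_{r(e)}}}s_e$, whereas the reverse composite reduces to the identity $\rho|_{\FF_E}=\pi$. The latter I would prove by induction on path length: the two representations agree on the $p_v$ and on the degree-one elements $s_es_f^*$ (a short computation as for $S_eS_e^*$), and from the factorisation $s_{e\mu}s_{f\nu}^*=\sqrt{n_{s(\mu)}n_{s(\nu)}}\,s_es_e^*\,\VV(s_\mu s_\nu^*)\,s_fs_f^*$, read off \eqref{endomorphism action on F_A}, together with the intertwinings $\rho(\VV(a))=S\rho(a)S^*$ and $\pi(\VV(a))=S\pi(a)S^*$ through the common operator $S$, agreement propagates from length $n$ to length $n+1$ and hence to all of $\FF_E$ by density.

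Finally I would deduce the isomorphism $C^*(E)\cong C^*(\FF_E,\VV,\HH)$ by applying the correspondence to the universal covariant representation $(i_{\FF_E},s)$ generating the crossed product. By Proposition \ref{prop on} this crossed product is the Hilbert bimodule crossed product, for which $i_{\FF_E}$ is faithful, so the $i_{\FF_E}(p_v)$ are nonzero and the reconstructed $P_v,S_e$ form a genuine Cuntz-Krieger family in $C^*(\FF_E,\VV,\HH)$; universality of $C^*(E)$ then gives a homomorphism $C^*(E)\to C^*(\FF_E,\VV,\HH)$. In the reverse direction the inclusion $\FF_E\hookrightarrow C^*(E)$ paired with the operator $s$ of \eqref{partial isometry for graph} is a covariant representation of $(\VV,\HH)$, so universality of the crossed product yields the inverse map; the two are checked to be mutually inverse on generators, and gauge-equivariance follows at once by comparing the two circle actions on $p_v,s_e$ and on $i_{\FF_E}(\cdot),s$. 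I expect the main obstacle to be the inductive reconstruction of all matrix units $s_\mu s_\nu^*$ in the previous paragraph, namely establishing that covariance together with the \emph{spreading} action \eqref{endomorphism action on F_A} of $\VV$ rigidly determines $\rho|_{\FF_E}$; a secondary point requiring care is pinning down the two faithfulness statements, the AF-argument and the faithfulness of $i_{\FF_E}$ on the coefficient algebra.
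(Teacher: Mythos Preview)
Your approach is correct and in fact more thorough than the paper's in places---you explicitly address both composites of the correspondence and the deduction of the isomorphism from universal properties, while the paper leaves these largely implicit. Your computation $S_eS_e^*=n_{r(e)}\pi(s_es_e^*\VV(1)s_es_e^*)=\pi(s_es_e^*)$ is also cleaner than the paper's direct expansion of $\sum_{e\in s^{-1}(v)}S_eS_e^*$.

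There is, however, one genuine gap in your inductive step. You invoke the intertwining $\rho(\VV(a))=S\rho(a)S^*$, but since $\rho(\VV(a))=\rho(s)\rho(a)\rho(s)^*$ this presupposes $\rho(s)=S$, i.e. $\sum_e\frac{1}{\sqrt{n_{r(e)}}}S_e=S$, which you have not established. This is exactly the identity the paper devotes its final paragraph to, via the decomposition of the range of $S\pi(p_v)$. You can fill it more quickly: since $S_e=\sqrt{n_{r(e)}}\,\pi(s_es_e^*)S$, one has $\sum_e\frac{1}{\sqrt{n_{r(e)}}}S_e=\pi\bigl(\sum_e s_es_e^*\bigr)S$, and this equals $S$ because $SS^*=\pi(\VV(1))$ with $\VV(1)=ss^*\le \sum_e s_es_e^*$ in $\FF_E$. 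Alternatively, you can bypass the issue entirely by running the induction without reference to $\rho(s)$: simply unfold $S_e=\sqrt{n_{r(e)}}\,\pi(s_es_e^*)S$ directly to get
\[
\rho(s_{e\mu}s_{f\nu}^*)=S_e\,\rho(s_\mu s_\nu^*)\,S_f^*=\sqrt{n_{r(e)}n_{r(f)}}\,\pi(s_es_e^*)\,S\,\pi(s_\mu s_\nu^*)\,S^*\,\pi(s_fs_f^*)=\pi(s_{e\mu}s_{f\nu}^*),
\]
using only the covariance relation for $\pi$ and your factorisation. Either fix is short; once made, your argument is complete and in several respects sharper than the paper's.
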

\begin{proof} A Cuntz-Krieger $E$-family  $\{P_v: v\in E^0\}$, $\{S_{e}:e\in E^1\}$ yields a representation $\pi$ of $C^*(E)$ which is well known to be faithful on $\FF_E$. By the  definition of $(\VV,\HH)$ the pair $(\pi|_{\FF_E}, S)$ where $S:=\pi(s)=\sum\limits_{e\in E_1 }\frac{1}{ \sqrt{n_{r(e)}}} S_e$ is a covariant representation of $(\VV,\HH)$. 
 Conversely, let $(\pi,S)$ be a faithful representation of  $(\VV,\HH)$ and put $P_v:=\pi(p_{v})$ and $S_e:= \sqrt{n_{r(e)}} \pi(s_e s_e^*)S$. We claim that  $\{P_v: v\in E^0\}$, $\{S_{e}:e\in E^1\}$ is  a Cuntz-Krieger $E$-family such that $S= \sum\limits_{e\in E_1 }\frac{ S_e}{ \sqrt{n_{r(e)}}}$. Indeed, for $e\in E^1$  we have
\begin{align*}
S_e^* S_e 
=n_{r(e)}\pi(p_{r(e)})\pi(\HH(s_es_e^*))\pi(p_{r(e)})=\pi(p_{r(e)}) =P_{r(v)},
\end{align*}
and for $v\in s(E^1)$ we have
\begin{align*}
\sum_{e\in s^{-1}(v)} S_e S_e^*&=\sum_{e\in s^{-1}(v)}n_{r(e)} \pi(s_e s_e^*)\pi(\VV(1))\pi(s_e s_e^*)
\\
&=\sum_{e\in s^{-1}(v), e_1,e_2 \in E^1}  \frac{n_{r(e)}}{\sqrt{n_{r(e_1)}n_{r(e_2)}} }\pi(s_es_e^*(s_{e_1} s_{e_2}^*)s_es_e^*)
\\
&=\sum_{e\in s^{-1}(v)}  \pi(s_es_e^*)=\pi(p_{v})=P_{v}.
\end{align*}
Now note  that $
S^*S=\pi(\HH(1))=\sum_{v \in r(E^1)}\pi(p_{v})$ and thus $S=\sum_{e\in E^1}S \pi(p_{v})$. 
Moreover, for each    $v\in r(E^1)$ we have 
\begin{align*}
\left(\sum\limits_{e\in r^{-1}(v)}\pi(s_e s_e^*)  \right) S\pi(p_v)S^* &= \sum\limits_{e\in r^{-1}(v)}\pi(s_es_e^*\VV(p_v))
  \\
 & = \sum\limits_{e, e_1,e_2 \in r^{-1}(v)}\frac{ \pi(s_es_e^* s_{e_1}s_{e_2}^*)}{ n_{v}}
 \\
 &= \sum\limits_{e_1,e_2 \in r^{-1}(v)}\frac{\pi(s_{e_1}s_{e_2}^*)}{ n_{v}} =\pi(\VV(p_v))
 \\
&=S\pi(p_v)S^*.
\end{align*}
Hence the final space of the partial isometry $S\pi(p_{v})$  decomposes into the orthogonal sum of ranges of  the projections  $\pi(s_es_e^*)$, $e\in r^{-1}(v)$, and consequently   
$$
\sum\limits_{e\in E^1 }\frac{ S_e}{ \sqrt{n_{r(e)}}}= \sum\limits_{e\in E^1 } \pi(s_es_e^*)S\pi(p_{r(e)}) =\sum\limits_{v\in E^0 } \sum\limits_{e\in r^{-1}(v) } \pi(s_es_e^*)S\pi(p_{v})= S.
$$
\end{proof}
\begin{rem}\label{remark for astrid}
 If $E$ has no sources then $s$   is an isometry and  $\VV$ is an injective  endomorphism with hereditary range. In this case  $C^*(E)$ coincides with various crossed products by endomorphisms that involve isometries, cf. \cite{Ant-Bakht-Leb}, \cite{exel2}, \cite{Paschke0}. In particular, Proposition \ref{thm for Huef and Raeburn}  has a nontrivial  intersection   with   \cite[Theorem 5.2]{hr} proved for locally finite graphs without sources.
\end{rem} 
\begin{rem}
The canonical completely positive map $\phi_E:C^*(E)\to C^*(E)$ is given by the formula
$$
\phi_E(x)=\sum_{e\in E^1} s_e x s_e^*.
$$
This map (unlike $\VV$ but like $\HH$) always preserves both $\FF_E$ and $\D_E$ and the pair $(\phi_E,\HH)$ is a $C^*$-dynamical on $\D_E$, cf. Proposition \ref{powers of partial isometry} below. Moreover,  if $E$ has no sinks  the same relations as in Proposition \ref{thm for Huef and Raeburn} yield an isomorphism between $C^*(E)$ and the Exel's crossed product $\D_E\rtimes_{(\phi_E,\HH)} \N$, see  \cite[Theorem 5.1]{BRV}. The advantage of   $\D_E\rtimes_{(\phi_E,\HH)} \N$ over  $C^*(\FF_E,\VV,\HH)$ is that it starts from a commutative $C^*$-algebra $\D_E$. The disadvantages are that the dynamics in $(\phi_E,\HH)$ is irreversible and  involves two mappings (at least implicitly, see \cite{kwa-exel}), while in essence  $(\VV,\HH)$ is a single map (recall Proposition \ref{proposition about uniqueness of interactions}) possessing a natural generalized inverse.
\end{rem}
A natural question to ask is when the graph interaction $(\VV,\HH)$ is a $C^*$-dynamical system. It is somewhat surprising that this holds only if $(\VV,\HH)$ is a part of a group interaction. We take up the rest of this subsection to clarify  this issue in detail. To this end we will use a partially-stochastic matrix $P=[p_{v,w}]$  arising from  the adjacency matrix $A_E=[A_E(v,w)]_{v,w \in E^0}$ of the graph $E$. Namely, we let 
\begin{equation}\label{stochastic matrix}
 p_{v,w}:=\begin{cases} 
 \frac{A_E(v,w)}{n_w}, & A_E(v,w)\neq 0, \\
0 , & A_E(v,w)= 0,
\end{cases}
\end{equation}
where $A_E(v,w)=|\{e\in E^1: s(e)=v, r(e)=w\}|$. By a   \emph{partially-stochastic matrix} we mean a non-negative matrix in which each non-zero column sums up to one.

\begin{prop}\label{powers of partial isometry}
Let $s$ be the operator  given by \eqref{partial isometry for graph}  and let $n \geq 1$. The following conditions are equivalent:
\begin{itemize}
\item[i)] $(\VV^n,\HH^n)$ is an interaction over $\FF_E$,
\item[ii)] $(\phi_E^n,\HH^n)$ is a $C^*$-dynamical system on  $\D_E$,
\item[iii)] operator  $s^n$ is a partial isometry, 
\item[iv)] $n$-th power of the matrix  $P=\{p_{v,w}\}_{v,w\in E^0}$  is  partially-stochastic,
\item[v)] for any $\mu\in E^n$ and $\nu\in E^k$, $k<n$, such that  $r(\mu)=r(\nu)$ we have $s(\nu)\notin E^0_{sources}$.  
 
\end{itemize}
\end{prop}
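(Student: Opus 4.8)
The plan is to route all five conditions through a single computation of the positive element $(s^{n})^{*}s^{n}$, which I expect to diagonalise over the vertex projections. Expanding $s$ from \eqref{partial isometry for graph} gives $s^{n}=\sum_{\mu\in E^{n}}c_{\mu}\,s_{\mu}$ with $c_{\mu}=\prod_{i=1}^{n}n_{r(\mu_{i})}^{-1/2}$, and since $s_{\mu}^{*}s_{\nu}=\delta_{\mu\nu}\,p_{r(\mu)}$ for paths of equal length this collapses to
$$
(s^{n})^{*}s^{n}=\sum_{w\in E^{0}}\Big(\sum_{\mu\in E^{n},\,r(\mu)=w}c_{\mu}^{2}\Big)\,p_{w}.
$$
A short induction on $n$, using $\sum_{v}A_{E}(v,w)=n_{w}$, identifies the coefficient of $p_{w}$ with the $w$-th column sum $\sigma^{(n)}_{w}:=\sum_{v}(P^{n})_{v,w}$ of the matrix from \eqref{stochastic matrix}, and the recursion $\sigma^{(n)}_{w}=\sum_{u}\sigma^{(n-1)}_{u}p_{u,w}$ shows $0\le\sigma^{(n)}_{w}\le 1$ for all $w$. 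Thus the whole statement reduces to controlling when each $\sigma^{(n)}_{w}$ lies in $\{0,1\}$.

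With this in hand iii) $\Leftrightarrow$ iv) and i) $\Leftrightarrow$ iii) are immediate. Indeed $s^{n}$ is a partial isometry iff $(s^{n})^{*}s^{n}$ is a projection, and as the $p_{w}$ are mutually orthogonal non-zero projections this happens exactly when every $\sigma^{(n)}_{w}\in\{0,1\}$, i.e. when every column of $P^{n}$ is either zero or sums to $1$ — which (as $\sigma^{(n)}_{w}\le 1$ excludes larger sums) is condition iv). For i) $\Rightarrow$ iii) I note $\VV^{n}(a)=s^{n}a(s^{n})^{*}$ and $\HH^{n}(a)=(s^{n})^{*}a\,s^{n}$, so if $(\VV^{n},\HH^{n})$ is an interaction then Lemma \ref{lem stanislaw} forces $\HH^{n}(1)=(s^{n})^{*}s^{n}$ to be a projection, whence $s^{n}$ is a partial isometry; conversely, if $s^{n}$ is a partial isometry one checks directly from $s^{n}(s^{n})^{*}s^{n}=s^{n}$ that $a\mapsto s^{n}a(s^{n})^{*}$ and $a\mapsto (s^{n})^{*}a\,s^{n}$ satisfy the axioms of Definition \ref{interaction definition}.

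For ii) I would first observe that $\phi_{E}^{n}$ is automatically an endomorphism of $\D_{E}$ and that $\HH^{n}$ is automatically a transfer operator for it (a one-line induction from the case $n=1$), so the only substantive requirement is non-degeneracy, $\phi_{E}^{n}(\HH^{n}(1))=\phi_{E}^{n}(1)$. Using $\phi_{E}^{n}(x)=\sum_{\mu\in E^{n}}s_{\mu}x\,s_{\mu}^{*}$ with the diagonalisation above yields $\phi_{E}^{n}(\HH^{n}(1))=\sum_{\mu\in E^{n}}\sigma^{(n)}_{r(\mu)}\,s_{\mu}s_{\mu}^{*}$ and $\phi_{E}^{n}(1)=\sum_{\mu}s_{\mu}s_{\mu}^{*}$; comparing coefficients of the orthogonal projections $s_{\mu}s_{\mu}^{*}$ shows non-degeneracy holds iff $\sigma^{(n)}_{r(\mu)}=1$ for every $\mu\in E^{n}$. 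Since $\sigma^{(n)}_{w}>0$ precisely when $w\in r(E^{n})$, this is again exactly $\sigma^{(n)}_{w}\in\{0,1\}$ for all $w$, i.e. condition iv).

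The remaining and, I expect, most delicate point is iv) $\Leftrightarrow$ v), the combinatorial heart of the argument. Here I would prove by induction on $n$ the characterisation
$$
\sigma^{(n)}_{w}=1\iff s(\nu)\notin E^{0}_{sources}\ \text{ for all }0\le k\le n-1\text{ and all }\nu\in E^{k}\text{ with }r(\nu)=w.
$$
The step uses that $\sigma^{(n)}_{w}=\sum_{u}\sigma^{(n-1)}_{u}p_{u,w}$ is a convex combination summing to $1$, hence equals $1$ iff $\sigma^{(n-1)}_{u}=1$ for every in-neighbour $u$ of $w$; unfolding this prepends one edge to each backward path and reproduces the stated condition. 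Granting it, $P^{n}$ fails to be partially-stochastic iff some $\sigma^{(n)}_{w}\in(0,1)$, i.e. iff there is $w\in r(E^{n})$ (so some $\mu\in E^{n}$ with $r(\mu)=w$) admitting a backward path $\nu\in E^{k}$, $k<n$, $r(\nu)=w$, with $s(\nu)$ a source — precisely the negation of v); the $k=0$ case is vacuous since no path of positive length ends at a source. The main obstacle is thus not a single estimate but arranging the column-sum bookkeeping so that the operator, dynamical and combinatorial conditions all reduce to $\sigma^{(n)}_{w}\in\{0,1\}$, with the backward-path induction carrying the real content.
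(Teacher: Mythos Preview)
Your proof is correct and follows essentially the same route as the paper: both reduce everything to the diagonal formula $(s^{n})^{*}s^{n}=\HH^{n}(1)=\sum_{w}\sigma^{(n)}_{w}p_{w}$ with $\sigma^{(n)}_{w}$ the $w$-th column sum of $P^{n}$, then read off i)--iv) from the condition $\sigma^{(n)}_{w}\in\{0,1\}$ and handle iv)$\Leftrightarrow$v) by the same column-sum recursion (the paper does the two directions separately, you package them into a single inductive characterisation of $\sigma^{(n)}_{w}=1$, but the content is identical).
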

\begin{proof}   
i) $\Leftrightarrow$ iii). As $\VV^n(\cdot)=s^n(\cdot)s^{*n}$ and $\HH^n(\cdot)=s^{*n}(\cdot)s^{n}$ one readily checks  that iii) implies i), and if we assume i) then $s^n$ is a partial isometry because  $\HH^n(1)$ is a   projection by Lemma \ref{lem stanislaw}. 
\\
iii) $\Leftrightarrow$ iv). Operator  $s^n$ is a partial isometry iff $\HH^n(1)$ is a projection. Since $\HH(p_v)=\sum_{w \in E^0}p_{v,w} p_w$, cf. \eqref{transfer action on F_A},   we get 
$$
\HH^n(1)=\sum_{v_0,...,v_n\in E^0} p_{v_0,v_1}\cdot  p_{v_1,v_2}\cdot  ...\cdot p_{v_{n-1},v_{n}}  p_{v_n} =\sum_{v,w\in E^0} p_{v,w}^{(n)} p_w
$$
where $P^n=\{p_{v,w}^{(n)}\}_{v,w\in E^0}$ stands for the $n$-th power of $P$. By the orthogonality of projections $p_w$,  it follows that  $\HH^n(1)$ is a projection iff   $\sum_{v\in E^0} p_{v,w}^{(n)}\in \{0,1\}$ for all $w\in E^0$, that is iff  $P^{n}$ is partially-stochastic. 
\\
ii) $\Leftrightarrow$ iv). We know that $\phi_E:\D_E\to \D_E$ is an endomorphism and $\HH$ is its transfer operator. Moreover, it is a straight forward fact that an iteration of an endomorphism and its transfer operator gives  again an endomorphism and its transfer operator. Thus $(\phi_E^n, \HH^n)$ is a $C^*$-dynamical system iff  the transfer operator $\HH^n$ is regular, that is iff $\phi_E^n(\HH^n(1))=\phi_E^n(1)$. 
However, as
$$
\phi_E^n(\HH^n(1))=\sum_{v,w\in E^0} p_{v,w}^{(n)} \phi_E^n(p_w)=\sum_{v\in E^0, \mu \in E^n} p_{v,r(\mu)}^{(n)} s_\mu s_\mu^*$$
and $\phi_E^n(1)=\sum_{\mu \in E^n} s_\mu s_\mu^*
$
we see that $\phi_E^n(\HH^n(1))=\phi_E^n(1)$ if and only if $P^n=\{p_{v,w}^{(n)}\}_{v,w\in E^0}$ is partially-stochastic.
\\ 
iv) $\Rightarrow$ v). Assume that v) is not true, that is let $\mu\in E^n$ and $\nu\in E^k$, $k<n$, be such that  $r(\mu)=r(\nu)$ and $s(\nu)\in E^0_{sources}$. Notice that   the condition $\sum_{v\in E^0} p_{v,w}^{(n)} >0$ is equivalent to existence of   $\eta \in E^n$ such that  $w=r(\mu)$. Hence putting $w:=r(\mu)=r(\nu)$ and $v_0:=s(\nu)$ 
we have $\sum_{v\in E^0} p_{v,w}^{(n)} >0$ and $p_{v_{0},w}^{(k)}> 0$. Then $\sum_{v\in E^0}p_{v,v_{0}}^{(n-k)}=0$ (because $v_0\in E^0_{sources}$) and therefore 
$$
0< \sum_{v\in E^0} p_{v,w}^{(n)}=\sum_{v\in E^0,\atop v_{n-k}\in E^0\setminus\{v_0\}} p_{v,v_{n-k}}^{(n-k)}p_{v_{n-k},w}^{(k)}\leq \sum_{v_{n-k}\in E^0\setminus\{v_0\}} p_{v_{n-k},w}^{(k)}< 1,
 $$
that is $P^n$ is not partially-stochastic. 
\\
v) $\Rightarrow$ iv). Suppose that $\sum_{v\in E^0} p_{v,w}^{(n)} >0$. By our assumption for each $0<k<n$ the condition   
$p_{v_{k},w}^{(n-k)}\neq 0$  implies that $v_k\notin  E^0_{sources}$. However, relation $v_k\notin  E^0_{sources}$ is equivalent to  $\sum_{v_{k-1}\in E^0} p_{v_{k-1},v_{k}}^{(1)}=1$ (because $P$ is partially-stochastic). Therefore proceeding inductively for $k=1,2,3...,n-1$ we get
$$
\sum_{v\in E^0} p_{v,w}^{(n)} = \sum_{v_0,v_{1}\in E^0} p_{v_0,v_{1}}^{(1)} p_{v_{1},w}^{(n-1)}= \sum_{v_1\in E^0}  p_{v_{1},w}^{(n-1)}=...=\sum_{v_{n-1}\in E^0}  p_{v_{n-1},w}^{(1)}=1.
$$
\end{proof}
\begin{ex}
It follows from  Proposition \ref{powers of partial isometry} that if we consider a graph interaction $(\VV,\HH)$ arising from  the following graph 
\\
\setlength{\unitlength}{.8mm}
\begin{picture}(80,20)(-5,0)
%\put(15,10){\circle*{1}} \put(14,6){$x_0$}
\put(35,10){\circle*{1}} \put(33,6){$v_0$}
\put(55,16){\circle*{1}} \put(54,12){$w_1$}
%\put(75,16){\circle*{1}} 
\put(74,15){$\dots$} \put(74,4){$\dots$}
\put(100,16){\circle*{1}} \put(99,12){$w_{n-1}$}
\put(100,4){\circle*{1}} \put(99,0.5){$v_{n-1}$}
\put(120,4){\circle*{1}} \put(119,0.5){$v_{n}$}
\put(55,4){\circle*{1}} \put(53,0.5){$v_1$}
\put(53,16){\vector(-3,-1){16}} \put(98,16){\vector(-1,0){16}} \put(98,4){\vector(-1,0){16}} \put(118,4){\vector(-1,0){16}} %\put(33,10){\vector(-1,0){16}}
\put(53,4){\vector(-3,1){16}} \put(73,16){\vector(-1,0){16}} \put(73,4){\vector(-1,0){16}}
\end{picture}\\
then $(\VV,\HH)$ has the property that its $k$-th  power $(\VV^k,\HH^k)$, for $k>1$,  is an interaction unless $k=n$. Hence   by considering a disjoint sum of graphs of the  above form one can obtain a graph interaction with an arbitrary finite distribution of powers being  interactions.
\end{ex}
In our specific situation of graph interactions we may  prolong the list of equivalent conditions in  Proposition \ref{characterization of corner interactions} as follows.
\begin{cor}
\label{condition on matrices and partial isometries}
Let $(\VV,\HH)$ be the interaction associated to the graph $E$. The following conditions are equivalent:
\begin{itemize}
\item[i)]  $(\VV,\HH)$ is  a $C^*$-dynamical system, 
\item[ii)] $(\VV^n,\HH^n)$ is an interaction for all $n\in \N$,
\item[iii)] $(\phi_E^n,\HH^n)$ is a $C^*$-dynamical system for all $n\in \N$,
\item[iv)]  operator $s$  given by \eqref{partial isometry for graph} is a power partial isometry,
\item[v)]  every power of  the  matrix $P=\{p_{v,w}\}_{v,w\in E^0}$ is partially-stochastic,
\item[vi)] every two  paths in $E$ that have the same length and  the same ending either both starts in sources  or not in sources.
\end{itemize}
\end{cor}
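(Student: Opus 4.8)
The plan is to split the six conditions into two blocks and join them by one short combinatorial step. The block ii)--v) I would dispatch all at once: Proposition \ref{powers of partial isometry} establishes, \emph{for each fixed} $n\geq 1$, that ``$(\VV^n,\HH^n)$ is an interaction'', ``$(\phi_E^n,\HH^n)$ is a $C^*$-dynamical system'', ``$s^n$ is a partial isometry'' and ``$P^n$ is partially-stochastic'' are equivalent. Quantifying each of these over all $n\in\N$ then yields ii) $\Leftrightarrow$ iii) $\Leftrightarrow$ iv) $\Leftrightarrow$ v) with no further work; here I use that iv) is, by definition of a power partial isometry, exactly the assertion that $s^n$ is a partial isometry for every $n$.

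For i) $\Leftrightarrow$ vi) I would route through Proposition \ref{characterization of complete interactions}, by which $(\VV,\HH)$ is a $C^*$-dynamical system precisely when $\HH(1)$ is central in $\FF_E$. Since $\HH(1)=s^*s=\sum_{v\in r(E^1)}p_v=1-Q$ with $Q:=\sum_{v\in E^0_{sources}}p_v$, centrality of $\HH(1)$ is the same as centrality of $Q$. On the dense spanning set of matrix units $s_\mu s_\nu^*$ of $\FF_E$ — for which one must recall that nonvanishing forces $|\mu|=|\nu|$ and $r(\mu)=r(\nu)$ — one computes that $Q\,s_\mu s_\nu^*$ equals $s_\mu s_\nu^*$ when $s(\mu)\in E^0_{sources}$ and $0$ otherwise, while $s_\mu s_\nu^*\,Q$ equals $s_\mu s_\nu^*$ when $s(\nu)\in E^0_{sources}$ and $0$ otherwise. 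Hence $Q$ commutes with every such unit exactly when any two equally long paths with a common range have their sources simultaneously in $E^0_{sources}$ or not, which is verbatim condition vi). By continuity this gives centrality of $Q$, and therefore i) $\Leftrightarrow$ vi).

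It then remains to bridge the two blocks, and the plan is to prove v) $\Leftrightarrow$ vi) combinatorially, which closes the chain i) $\Leftrightarrow$ vi) $\Leftrightarrow$ v) $\Leftrightarrow$ iv) $\Leftrightarrow$ iii) $\Leftrightarrow$ ii). Invoking Proposition \ref{powers of partial isometry} once more (its iv) $\Leftrightarrow$ v) for each $n$), condition v) is equivalent to the statement $(\star)$ that no path starting in $E^0_{sources}$ shares its range with a strictly longer path. If vi) fails, say via $\mu,\nu\in E^n$ with $r(\mu)=r(\nu)$, $s(\mu)\in E^0_{sources}$ and $s(\nu)\notin E^0_{sources}$, then prepending to $\nu$ an edge $e$ with $r(e)=s(\nu)$ produces $e\nu\in E^{n+1}$ sharing its range with the length-$n$ source path $\mu$, contradicting $(\star)$. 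Conversely, if $(\star)$ fails via $\mu\in E^N$ and $\nu\in E^k$ with $k<N$, $r(\mu)=r(\nu)$ and $s(\nu)\in E^0_{sources}$, I would factor $\mu=\mu'\mu''$ with $\mu''\in E^k$ its terminal subpath; then $r(\mu'')=r(\nu)$ while $s(\mu'')$ receives the last edge of $\mu'$, so $s(\mu'')\notin E^0_{sources}$, and $\mu'',\nu$ witness the failure of vi). The main obstacle is this path-splitting half, where one must correctly extract the equal-length witnesses from inside the longer path; the centrality computation is routine but hinges on the structural fact, recalled above, that nonzero matrix units in $\FF_E$ have equal-length indices with a common range.
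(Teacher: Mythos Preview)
Your proof is correct and follows essentially the same route as the paper. The paper also derives ii)--v) by quantifying Proposition~\ref{powers of partial isometry} over all $n$, and it obtains i) $\Leftrightarrow$ vi) by the centrality criterion from Proposition~\ref{characterization of complete interactions} together with the same commutation check against the matrix units $s_\mu s_\nu^*$ (the paper phrases it with $\HH(1)$ rather than its complement $Q$, which is of course equivalent). The only difference is that the paper states the bridge vi) $\Leftrightarrow$ [item v) of Proposition~\ref{powers of partial isometry} for all $n$] without argument, whereas you spell out the combinatorial details; your path-splitting argument is exactly what is needed there.
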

\begin{proof}
Item vi) holds if and only if item v) in Proposition \ref{powers of partial isometry} holds for all $n\in \N$. 
Hence  by Proposition \ref{powers of partial isometry} we get the equivalence between all the items from ii) to vi) in the present assertion. Furthermore, we recall that $\HH(1)=s^*s=\sum_{v \in r(E^1)} p_v$, and item i) is equivalent to $\HH(1)$ being a central element in $\FF_E$, see  Proposition \ref{characterization of corner interactions}. Hence the equivalence  i)$\Leftrightarrow$ vi)  follows  from the relations 
$$
\HH(1) s_\mu s^*_\nu=\begin{cases}
0, & \textrm{if } s(\mu) \notin r(E^1)\\
s_\mu s^*_\nu, & \textrm{otherwise }
\end{cases},\,\, 
s_\mu s^*_\nu \HH(1) =\begin{cases}
0, & \textrm{if } s(\nu) \notin r(E^1)\\
s_\mu s^*_\nu, & \textrm{otherwise }
\end{cases}, 
$$
which hold for all $\mu,\nu\in E^n$, $n\in \N$.
\end{proof}
A natural question to ask is when $\HH$  is multiplicative. We rush to say that it is  hardly the case.
\begin{prop}
The pair $(\HH,\VV)$, where $(\VV,\HH)$ is the interaction of  $E$, is a $C^*$-dynamical system if and only if  the mapping $r:E^1\to E^0$ is injective.
\end{prop}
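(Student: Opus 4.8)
The plan is to recognize that reversing an interaction produces an interaction, reduce the assertion to centrality of the projection $\VV(1)=ss^*$ in $\FF_E$ via the machinery already set up, and then read off the answer from the Cuntz-Krieger relations \eqref{C-K2}. First I would note that conditions (i)--(iv) of Definition \ref{interaction definition} are symmetric under interchanging the two maps and that the two hereditary ranges $\VV(\FF_E)$, $\HH(\FF_E)$ are the same for both orderings; hence $(\HH,\VV)$ is again a complete interaction over $\FF_E$. Proposition \ref{characterization of complete interactions} therefore applies to $(\HH,\VV)$, and its equivalence i)$\Leftrightarrow$v) --- now read for the reversed pair, whose transfer operator is $\VV$ --- says that $(\HH,\VV)$ is a $C^*$-dynamical system exactly when the unit $\VV(1)=ss^*$ of its transfer operator's range lies in the center of $\FF_E$. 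This turns the proposition into the single task of characterizing centrality of $ss^*$.

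For the ``if'' direction I would assume $r$ injective, i.e. $n_v\le 1$ for every $v$, so that each $v\in r(E^1)$ receives a unique edge. A short computation from \eqref{C-K2} then collapses $ss^*$ to $\sum_{e\in E^1}s_es_e^*=\sum_{v\in s(E^1)}p_v$. Since the source of any path of positive length automatically lies in $s(E^1)$, this element acts as the identity on every spanning element $s_\mu s_\nu^*$ of $\FF_E$ with $|\mu|=|\nu|\ge 1$ and acts consistently on the remaining vertex projections; hence it is central, and $(\HH,\VV)$ is a $C^*$-dynamical system.

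For the converse I would argue contrapositively: if $r$ is not injective, pick a vertex $v_0$ receiving two distinct edges $e_1\neq e_2$, so $n_{v_0}\ge 2$, and test centrality against the projection $x:=s_{e_1}s_{e_1}^*$, which belongs to $\FF_E$ as it is $s_\mu s_\nu^*$ with $|\mu|=|\nu|=1$ and $r(\mu)=r(\nu)=v_0$. Using only $s^*s_{e_1}=\tfrac{1}{\sqrt{n_{v_0}}}p_{v_0}$ and $s\,p_{v_0}=\tfrac{1}{\sqrt{n_{v_0}}}\sum_{e\in r^{-1}(v_0)}s_e$, one obtains
$$
ss^*x=\frac{1}{n_{v_0}}\sum_{e\in r^{-1}(v_0)}s_es_{e_1}^*,\qquad xss^*=\frac{1}{n_{v_0}}\sum_{e\in r^{-1}(v_0)}s_{e_1}s_e^*,
$$
and these differ, since the first expansion contains the matrix unit $s_{e_2}s_{e_1}^*$ while the second contains the independent matrix unit $s_{e_1}s_{e_2}^*$. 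Thus $ss^*$ is not central, and $(\HH,\VV)$ is not a $C^*$-dynamical system.

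I expect the only genuinely delicate point to be the bookkeeping in the reduction step: one must be sure that reversing the interaction lands us in condition v) for $\VV(1)$ rather than for $\HH(1)$, and that the test element $x$ really sits inside the core $\FF_E$. Once these are pinned down, both directions are immediate consequences of \eqref{C-K2}.
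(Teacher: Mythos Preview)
Your proof is correct and follows the same route as the paper: reduce via Proposition \ref{characterization of complete interactions} (applied to the reversed interaction) to centrality of $\VV(1)=ss^*$ in $\FF_E$, then test against length-one generators. The paper's ``if'' direction is one line shorter: when $r$ is injective any two paths of equal length with the same range coincide, so $\FF_E=\D_E$ is commutative and centrality of $\VV(1)$ is automatic.
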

\begin{proof} By  Proposition \ref{characterization of corner interactions} multiplicativity of $\HH$ is equivalent to $\VV(1)$ being a central element in $\FF_E$. If  $r:E^1\to E^0$ is injective, then $\FF_E=\D_E$ is commutative and  $(\HH,\VV)$ is a $C^*$-dynamical system because $\VV(1)\in \FF_E$. Conversely,  let us assume that the projection  $\VV(1)=ss^*= \sum\limits_{v\in r(E^1)}\frac{1}{n_{v}} \sum\limits_{e,f\in r^{-1}(v)}\,\, s_es_f^*$  is  central in $\FF_E$ and let   $g,h\in E^1$ be such that $r(g)=r(h)=v$. Since   
$$\VV(1) s_g s^*_h= \frac{1}{n_{v}}\sum_{e \in r^{-1}(v)}  s_{e} s^*_{h}, \quad\textrm{ and }\quad s_g s^*_h\VV(1)=\frac{1}{ n_{v}}\sum_{f \in r^{-1}(v)}  s_{g} s^*_{f}
$$ 
we have $\sum_{e \in r^{-1}(v)}  s_{e} s^*_{h}=\sum_{f \in r^{-1}(v)}  s_{g} s^*_{f}$, which implies $g=h$. Hence  $r:E^1\to E^0$ is injective. 
\end{proof}

   \subsection{Dynamical systems dual to  graph interactions.}
   Let $(\VV,\HH)$ be the  interaction of the graph  $E$.    We  obtain a satisfactory picture of the  system dual  to $(\VV,\HH)$ using a Markov shift $(\Omega_E,\sigma_E)$ dual to the commutative system $(\D_E, \phi_E)$. Namely, we put $\Omega_E=\bigcup_{N=0}^{\infty}E^N_{sinks}\cup E^\infty$ and   let 
   $
   \sigma_E:\Omega_E\setminus E^0_{sinks} \to \{\mu \in \Omega_E: s(\mu) \notin E^0_{sources}\}
   $ 
   be the shift defined by the formula
$$
\sigma_E(\mu)=\begin{cases}
\mu_2\mu_3... &\textrm{ if }\mu=\mu_1\mu_2...\in \bigcup_{N=2}^{\infty}E^N_{sinks}\cup E^\infty\\
r(\mu) & \textrm{ if }\mu\in E^1_{sinks}.
\end{cases}
$$
  There is a natural   `product'  topology on $\Omega_E$ with the basis formed by the cylinder sets  
$U_{\nu}=\{\nu\mu : \nu\mu \in \Omega_E\}$, $\nu\in  E^n, \,n\in \N$.
Equipped with this topology 
$\Omega_E$ is a compact Hausdorff space and  $\sigma_E$ is a local homeomorphism whose both  domain and codomain are clopen. Moreover, the standard argument, cf. e.g. \cite[Lemma 3.2]{jp}, shows that $s_\nu s_\nu^*\mapsto \chi_{U_\nu}$, $\nu\in  E^n, \,n\in \N$, establishes an isomorphism $\D_E\cong C(\Omega_E)$  which intertwines   $\phi_E:\D_E\to \D_E$ with the operator of composition with $\sigma_E$.

Let us consider the relation of `eventual equality' defined on $\Omega_E$ as follows:  
 $$
 \mu  \sim \nu \,\,\,\, \stackrel{def}{\Longleftrightarrow} \,\,\,\, 
 \begin{cases} 
  \nu,\mu\in E^\infty \textrm{ and  } \mu_N\mu_{N+1}...= \nu_N\nu_{N+1}... \,\textrm{ for some }\, N\in \N,
  \\
 \nu,\mu\in E^N_{sinks}\, \textrm{ for some }\, N\in \N\,\textrm{ and }\, r(\mu_N)=r(\nu_N).
  \end{cases}
 $$   
 Plainly, $\sim$ is an equivalence relation. We denote by $[\mu]$ the  equivalence class of $\mu\in \Omega_E$, and view  $\Omega_E/\sim$ as a topological space  equipped with the quotient topology. %The topology of the quotient space $\Omega_E/\sim$ has relatively good description.
 \begin{lem}\label{lemma on quotient space}
The quotient map $q:\Omega_E \mapsto \Omega_E/\sim$ is open and the sets  
\begin{equation} \label{basis for quotients}  %$$
U_{v,n}:=\{[\mu]: \exists_{\eta \in E^{k}, k\in \N} \,\, s(\eta)=v, r(\eta)=\mu_{n+k}\}, \quad v\in r(E^n), n\in \N,
\end{equation}%$$
form a basis for the quotient topology of $\Omega_E/\sim$. Moreover, the formula 
 \begin{equation}\label{definition of partial homeomorphism dual to graph}
 [\sigma_E][\mu]:=[\sigma_E(\mu)]
 \end{equation}
defines a partial homeomorphism of $\Omega_E/\sim$ with natural domain and codomain:  
\begin{align*}\{[\mu]:\mu \in \Omega_E\setminus E^0_{sinks}\}&=\bigcup_{v\in E^0\setminus E^0_{sinks}} U_{v,0},
\\
\{[\mu] \in \Omega_E: s(\mu) \notin E^0_{sources}\}&=\bigcup_{v\in E^0\setminus E^0_{sources}} U_{v,0}.
\end{align*} \end{lem}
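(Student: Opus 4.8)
The plan is to prove the three assertions in order, with the openness of $q$ carrying essentially all of the combinatorial content and the remaining parts following by general topology.

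First I would establish that $q$ is open. Since the saturation operation $S\mapsto q^{-1}(q(S))$ commutes with unions and the cylinders $U_\nu$ form a basis of $\Omega_E$, it suffices to show that the $\sim$-saturation of each $U_\nu$ is open, and more precisely that $q^{-1}(q(U_\nu))=q^{-1}(U_{r(\nu),|\nu|})$. The conceptual heart here is a \emph{re-routing} observation: any two finite paths of equal length sharing the same range may be interchanged as initial segments of a path without leaving its $\sim$-class, so $\tau$ is $\sim$-equivalent to some path beginning with $\nu$ precisely when $\tau$ agrees, from some finite level onward, with a path passing through $r(\nu)$ at level $|\nu|$. This is exactly the defining condition of $U_{r(\nu),|\nu|}$. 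That $q^{-1}(U_{v,n})$ is then open I would verify by the direct cylinder argument: if $\mu$ belongs to it with witness $\eta\in E^k$, then every $\tau$ in the cylinder $U_{\tau_1\cdots\tau_{|\nu|+k}}$ has $r(\tau_{n+k})=r(\mu_{n+k})=r(\eta)$ and hence lies in the set with the same witness, so the set is a neighborhood of each of its points. The finite-sink case is handled identically, the index $k$ being restricted so that $\mu_{n+k}$ exists. I expect the re-routing argument, together with a careful reading of the index conventions (in particular $r(\mu_0):=s(\mu)$ when $n=0$), to be the main obstacle.

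For the basis statement I would invoke the general fact that under an open continuous surjection the images of a basis of the domain form a basis of the codomain. Since $q$ is open, continuous and surjective and the $U_\nu$ are a basis of $\Omega_E$, the sets $q(U_\nu)$ form a basis of $\Omega_E/\!\sim$. The identity $q^{-1}(q(U_\nu))=q^{-1}(U_{r(\nu),|\nu|})$ together with surjectivity of $q$ gives $q(U_\nu)=U_{r(\nu),|\nu|}$; and as $\nu$ ranges over $E^n$ the vertex $r(\nu)$ ranges over all of $r(E^n)$, so for each $n$ every $U_{v,n}$ with $v\in r(E^n)$ arises as some $q(U_\nu)$. Hence the family $\{U_{v,n}:v\in r(E^n),\,n\in\N\}$ coincides with $\{q(U_\nu)\}$ and is a basis; in particular each $U_{v,n}$ is open.

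Finally, for the partial homeomorphism I would check well-definedness, bijectivity and bicontinuity. Well-definedness of $[\sigma_E][\mu]:=[\sigma_E(\mu)]$ is immediate once one notes that $\sim$ never relates paths of different type and that $\sigma_E$ preserves both eventual agreement of infinite paths and the common range of same-length sinks; the same remarks give injectivity on the domain, since shifting forgets only the first edge, which $\sim$ already ignores. Surjectivity onto the codomain follows by prepending: if $s(\mu)\notin E^0_{sources}$ then some edge $e$ has $r(e)=s(\mu)$ and $[\sigma_E][e\mu]=[\mu]$. I would then identify the stated domain and codomain with $\bigcup_{v\notin E^0_{sinks}}U_{v,0}$ and $\bigcup_{v\notin E^0_{sources}}U_{v,0}$ respectively (the latter read as the saturated set $q(\{\mu:s(\mu)\notin E^0_{sources}\})$, since the pointwise source is not $\sim$-invariant, and produced via the same re-routing argument), so that both are open. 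Continuity and openness of $[\sigma_E]$ then follow from the relation $[\sigma_E]\circ q=q\circ\sigma_E$ on the domain, the facts that $q$ restricts to a quotient map onto the open domain and that $\sigma_E$ is an open map with clopen domain and codomain; a continuous open bijection between open subsets of $\Omega_E/\!\sim$ is precisely the asserted partial homeomorphism.
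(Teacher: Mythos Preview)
Your proposal is correct and follows essentially the same approach as the paper: both hinge on the identification $q(U_\nu)=U_{r(\nu),|\nu|}$ (your ``re-routing'' observation) together with the explicit cylinder description of $q^{-1}(U_{v,n})$, from which openness of $q$ and the basis statement follow at once. The only noticeable difference is in the bicontinuity argument for $[\sigma_E]$: the paper computes directly that $[\sigma_E](U_{v,n+1})=U_{v,n}$ and $[\sigma_E]^{-1}(U_{v,n})=U_{v,n+1}$ on basis elements, whereas you invoke the commutation $[\sigma_E]\circ q=q\circ\sigma_E$ together with openness of $q$ and $\sigma_E$; both are valid, with the paper's version being marginally more direct and yours slightly more conceptual.
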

\begin{proof}
A moment of thought yields   that if $\nu \in E^n$ is such that $r(\nu)=v$, then $q(U_{\nu})=U_{v,n}$. In particular, one sees that 
\begin{align*}
q^{-1}(U_{v,n})
&=\{\mu \in \Omega_E:  \exists_{\eta \in E^{k}, k\in \N} \,\, s(\eta)=v, r(\eta)=\mu_{n+k} \}
\\
&= \bigcup_{k\in \N} \bigcup_{\eta\in E^{k},  \atop s(\eta)=v} \bigcup_{\nu \in E^{n+k} \atop r(\nu)=r(\eta)}\,\,\, U_{\nu},
\end{align*}
which means that   $U_{v,n}$ is open in $\Omega_E/\sim$. We conclude that \eqref{basis for quotients} defines a basis for the topology of $\Omega_E/\sim$ and $q$ is an open map.
\\
Now, it is straightforward to check that  \eqref{definition of partial homeomorphism dual to graph} gives a well defined mapping whose domain and codomain are open sets of the form described in the assertion. The map $[\sigma_E]$  is invertible as for $\mu \in \Omega_E$  such that $s(\mu) \notin E^0_{sources}$ its inverse can be described by the formula
$$
[\sigma_E]^{-1}[\mu]=[e\mu]\,\,\,\, \textrm{ for an arbitrary edge } e\in E^1 \textrm{ such that } r(e)=s(\mu),
$$
where $e\mu:=e$ when $\mu\in E^0_{sinks}$ is a vertex. Since $[\sigma_E](U_{v,n+1})=U_{v,n}$  and  $[\sigma_E]^{-1}(U_{v,n})=U_{v,n+1}$ for $v\in E^n$, $n \in \N$, we see that $[\sigma_{E}]$ is a partial homeomorphism.
\end{proof}
% $$  [\sigma_E]^{-1}[\mu]=\begin{cases}[(\mu_0,\mu_1,...,\mu_N)], & (\mu_1,\mu_2,...,\mu_N)=(\in E^{N}_{sinks}\\[(\mu_1,\mu_2,\mu_3...)],  &(\mu_1,\mu_2,\mu_3...)=(\in E^{\infty}\end{cases} $$

 We  show that the quotient partial reversible dynamical system $(\Omega_E/\sim, [\sigma_E])$  embeds as a dense subsystem into $(\widehat{\FF_E}, \widehat{\VV})$. Under this embedding the relation $\sim$ coincides with the unitary equivalence of GNS-representations associated to pure extensions of the pure states of $\D_E=C(\Omega_E)$. 
More precisely, for any  path $\mu\in \Omega_E$ the formula
\begin{equation}\label{product states equation}
\omega_\mu(s_\nu s_\eta^*)=
\begin{cases}
1 & \nu=\eta=\mu_1...\mu_n \\
0 & \textrm{otherwise} 
\end{cases},\qquad \textrm{for}\quad \nu,\eta\in E^n, \,\, n\in \N,
\end{equation}
determines  a pure state $\omega_\mu:\FF_E \to \C$  (a pure  extension of the point evaluation $\delta_\mu$ acting on  $\D_E=C(\Omega_E)$). Indeed, the functional $\omega_\mu$ is a pure state on each $\FF_k$, $k\in \N$,
and thus it is also a pure state on $\FF_E=\overline{\bigcup_{k\in \N}\FF_k}$, cf. e.g. \cite[4.16]{Bratteli}. We denote  by $\pi_\mu$ the GNS-representation associated to  $\omega_\mu$ and take up the rest of the  subsection to prove the following

 \begin{thm}[Partial homeomorphism dual to a graph interaction]\label{shifts of path representations}
 Under the above notation 
 $
 [\mu] \mapsto \pi_\mu 
 $
 is a topological embedding of $\Omega_E/\sim$  as a dense subset  into $\widehat{\FF_E}$.  This embedding intertwines $[\sigma_E]$ and $\widehat{\VV}$. Accordingly,
 the space $\widehat{\FF_E}$ admits the following decomposition into disjoint sets
$$
\widehat{\FF_E}=\bigcup_{N=0}^{\infty}\widehat{G}_N \cup \widehat{G}_\infty
$$
where the sets $\widehat{G}_N=\{\pi_\mu: [\mu]\in E^N_{sinks}/\sim\}$ are  open discrete  and  $\widehat{G}_\infty=\overline{\{\pi_\mu: [\mu] \in E^\infty/\sim\}}$ is a closed subset of $\widehat{\FF_E}$.  The set
$$
\Delta=\widehat{\FF_E}\setminus \widehat{G}_0
$$
is the
domain of $\widehat{\VV}$, and  $\widehat{\VV}$ is uniquely determined by the formula
$$
\widehat{\VV}(\pi_{\mu})=\pi_{\sigma_E(\mu)},\qquad      \mu \in \Omega_E\setminus E^0_{sinks}.
$$
 In particular, $
 \pi_{\mu} \in \widehat{\VV}(\Delta)$, for $\mu\in E^{N}_{sinks}$, iff  there is $\nu\in E^{N+1}_{sinks}$  such that  $r(\mu)=r(\nu)$,
 and  then $\widehat{\HH}(\pi_{\mu})=\pi_{\nu}$. Similarly,
   $   \pi_{\mu} \in \widehat{\VV}(\Delta)$, for $\mu\in E^{\infty}$,  iff there is $\nu\sim \mu$ such that  $s(\nu)$ is not a source, and then for any
   $\nu_0 \in E^{1}$ such that $\nu_0\nu_1\nu_2...\in E^{\infty}$ we have  
  $\widehat{\HH}(\pi_{\mu})=\pi_{\nu_0\nu_1\nu_2,...}
 $.
 \end{thm}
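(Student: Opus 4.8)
The plan is to analyze each $\pi_\mu$ through the finite-dimensional blocks of the AF-core and then transport the path-space combinatorics of Lemma \ref{lemma on quotient space} to the Jacobson topology on $\widehat{\FF_E}$. First I would record the block structure of $\pi_\mu$: since $\omega_\mu$ (see \eqref{product states equation}) vanishes off the block $\FF_N(v)$ with $v=r(\mu_1\cdots\mu_N)$ and there restricts to the vector state at the matrix unit $s_\mu s_\mu^*$, the restriction $\pi_\mu|_{\FF_N}$ is equivalent to the unique irreducible representation of that block, acting on $\spane\{e_\alpha:\alpha\in E^N,\ r(\alpha)=v\}$, and $\pi_\mu$ is the corresponding inductive-limit representation. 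From this description I would read off, for any projection built from the $s_\nu s_\nu^*$, the operator $\pi_\mu(\cdot)$ as an honest projection onto a set of paths; in particular $\pi_\mu(\VV(1))$ and $\pi_\mu(\HH(1))$ become projections cut out by prescribed source behaviour, which is exactly what the last two displayed equivalences require.

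The first genuine step is to show that $[\mu]\mapsto\pi_\mu$ is a bijection onto its image. Well-definedness ($\mu\sim\nu\Rightarrow\pi_\mu\cong\pi_\nu$) is immediate from the block picture: eventual equality forces $\omega_\mu,\omega_\nu$ to be vector states on a common block from some level on, and all vector states of a matrix algebra give equivalent irreducible representations compatibly with the inductive limit. Injectivity ($\pi_\mu\cong\pi_\nu\Rightarrow\mu\sim\nu$) is where I would invoke the representation theory of AF-algebras, cf. \cite{Bratteli}: product pure states of the type \eqref{product states equation} are equivalent precisely when the associated Bratteli paths are tail-equivalent, which here is $\sim$. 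The sink case is handled directly, since a sink carries a multiplicity-one tail, so $\pi_\mu$ for $\mu\in E^N_{sinks}$ depends only on $(N,r(\mu))$ and yields an isolated point; this simultaneously shows each $\widehat{G}_N$ is open and discrete, that $\bigcup_N\widehat{G}_N$ is open, and hence that its complement $\widehat{G}_\infty$ is closed, giving the asserted disjoint decomposition.

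Next I would prove that the bijection is a homeomorphism onto its dense image. Using $q(U_\nu)=U_{v,n}$ from Lemma \ref{lemma on quotient space}, I would match each basic set $U_{v,n}$ with the intersection of the image with the open set $\{\pi:\pi(q_{v,n})\neq0\}$, where $q_{v,n}$ is the reachability projection attached to $v$ at level $n$ of the Bratteli diagram; continuity and openness both reduce to the statement that $\pi_\mu$ passes through a vertex reachable from $v$ exactly when $[\mu]\in U_{v,n}$. Density then follows because, by construction, the only representations not of the form $\pi_\mu$ lie in the closure $\widehat{G}_\infty$ of the infinite-path representations. I expect this matching of the quotient topology with the non-Hausdorff Jacobson topology, together with the control of the limit points accumulating in $\widehat{G}_\infty$, to be the main obstacle; the finite-dimensional bookkeeping is routine, but proving openness and pinning down the closure requires care.

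Finally, the dynamics. The identification $\Delta=\widehat{\FF_E}\setminus\widehat{G}_0$ follows from \eqref{identification equalities} once I check, via the block picture, that $\pi_\mu(\VV(1))\neq0$ iff $\mu\notin E^0_{sinks}$; likewise $\pi_\mu(\HH(1))\neq0$ iff some path through the block of $\pi_\mu$ starts at a non-source, which rephrases as the stated existence of $\nu\in E^{N+1}_{sinks}$ with $r(\nu)=r(\mu)$ in the sink case, and of $\nu\sim\mu$ with $s(\nu)$ not a source in the infinite case. For the intertwining I would use Proposition \ref{coincidence of actions} (so that $\widehat{\VV}=\widehat{\HH}^{\,-1}$ and $\widehat{\HH}(\pi)|_{\VV(\FF_E)}=\pi\circ\HH$, Remark \ref{dual remarks}) together with the explicit formulas \eqref{endomorphism action on F_A}, \eqref{transfer action on F_A}: computing $\omega_{\sigma_E(\mu)}\circ\HH$ on $\VV(\FF_E)$ and comparing it with the compression of $\omega_\mu$ by $\VV(1)$ shows $\widehat{\HH}(\pi_{\sigma_E(\mu)})=\pi_\mu$, i.e. $\widehat{\VV}(\pi_\mu)=\pi_{\sigma_E(\mu)}$, and exhibiting the intertwining unitary between $\pi_\mu(\VV(1))H_\mu$ and $\pi_{\sigma_E(\mu)}(\HH(1))H$ upgrades this to a genuine equivalence, exactly as in the proof of Proposition \ref{coincidence of actions}. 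Reading the same computation backwards, by prepending an edge $\nu_0$, yields the displayed formulas for $\widehat{\HH}(\pi_\mu)$ in the sink and infinite cases.
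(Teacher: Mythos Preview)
Your plan follows essentially the same architecture as the paper: decompose $\widehat{\FF_E}$ via the ideal $I_{sinks}$ generated by sink projections, identify the finite-path part as discrete, handle the infinite-path part via pure-state equivalence, match bases to get the homeomorphism onto the image, and compute the intertwining on states. Two points deserve comment.

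First, the injectivity step is where you are thinnest. You write that ``product pure states of the type \eqref{product states equation} are equivalent precisely when the associated Bratteli paths are tail-equivalent'' and cite \cite{Bratteli}. Bratteli's paper classifies ideals, not pure states, and no such off-the-shelf statement is available there. The paper instead gives an explicit Glimm--Powers argument (adapted from \cite[6.5.6]{Pedersen}): if $\pi_\mu\cong\pi_\nu$ then $\omega_\mu=\omega_\nu(u^*\cdot u)$ for some unitary $u$; approximating $u$ by $x\in\FF_n$ and testing against $a=s_{\mu_1\cdots\mu_k}s_{\mu_1\cdots\mu_k}^*$ with $k>n$ (which commutes with $\FF_n$) forces $\mu_k=\nu_k$. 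You should plan on carrying out this computation rather than citing it away.

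Second, for the intertwining you propose to compute $\omega_{\sigma_E(\mu)}\circ\HH$ on $\VV(\FF_E)$ and compare with the $\VV(1)$-compression of $\omega_\mu$. This works, but the paper's route is cleaner: it computes $\omega_\mu\circ\VV$ directly from \eqref{endomorphism action on F_A} and obtains the scalar identity $\omega_\mu\circ\VV=\frac{1}{n_{r(\mu_1)}}\,\omega_{\sigma_E(\mu)}$, whence $\widehat{\VV}(\pi_\mu)=\pi_{\sigma_E(\mu)}$ by \cite[Cor.~3.3.8]{Pedersen}. Going through $\VV$ avoids having to describe $\VV(\FF_E)=\VV(1)\FF_E\VV(1)$ explicitly (recall $\VV(1)$ is not a sum of the $p_v$) and spares you the construction of an explicit intertwining unitary. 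Similarly, for the topology the paper makes the ``reachability projection'' precise via the Bratteli subdiagrams $W(\mu)$ and the identity $\Lambda(\ker\pi_\mu)=\Lambda(E)\setminus W(\mu)$, which turns your basis-matching into a one-line computation; your $q_{v,n}$ would need an analogous concrete description to make openness go through.
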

\begin{rem}\label{dilation remark}
One may verify that if we put 
$$A:=\spane(\{p_v: v\in E^0_{sinks}\}\cup \{s_es_e^*: e\in E^1\})\cong\C^{|E^0_{sinks}|+|E^1|},
$$
 then $\HH$ preserves $A$ and the smallest $C^*$-algebra containing $A$ and invariant under $\VV$ is  $\FF_E$. In this sense $\HH:\FF_E \to \FF_E$  is a natural  dilation of the positive linear map $\HH:A\to A$.
  This explains the  similarity of assertions in Theorem \ref{shifts of path representations} and in \cite[Theorem 3.5]{kwa-logist}; both of these results describe dual partial homeomorphisms obtained in the process of dilations. The essential difference is that a  dilation of a multiplicative map on a commutative algebra always leads a commutative $C^*$-algebra, cf. \cite{kwa-ext}, \cite{kwa-logist}, while a stochastic factor manifested by a lack of multiplicativity of the initial mapping inevitably leads to noncommutative objects after a dilation. Significantly, our dual picture of the graph interaction  `collapses' to the non Hausdorff quotient similar to that of  Penrose tilings \cite[3.2]{Connes}.
\end{rem}

We start by noting that the infinite direct sum $ \oplus_{N=0}^\infty  \oplus_{\,w\in E^0_{sinks}} \FF_N(w)$ yields an ideal $I_{sinks}$ in $\FF_E$  generated by the projections  $p_w$, $w\in  E^0_{sinks}$. 
We rewrite it in the form
$$
I_{sinks}=\bigoplus_{N\in \N} G_N, \qquad \quad\textrm{where }\quad G_N:=\Big(\oplus_{w\in E^0_{sinks}}   \FF_N(w)\Big).
$$
Plainly, $\FF_N(w)\neq \{0\}$ for  $w\in E^0_{sinks}$  iff  there is $\mu\in E^N_{sinks}$ such that $r(\mu)=w$ and then (since $\FF_N(w)$  is a finite factor)  $\pi_{\mu}$ is a  unique up to unitary equivalence irreducible representation  of $\FF_E$ such that 
$
\ker\pi_{\mu}\cap \FF_N(w)=\{0\}. 
$
Consequently, we see that 
$$  
  \widehat{I}_{sinks}=\bigcup_{N=0}^{\infty}\widehat{G}_N  \ni \pi_\mu   \longmapsto [\mu] \in \bigcup_{N=0}^{\infty}E^N_{sinks}/\sim %=(\Omega_E\setminus E^{\infty})/\sim 
$$ establishes a homeomorphism 
between the corresponding discrete spaces.
The complement of $\widehat{I}_{sinks}=\bigcup_{N=0}^{\infty}\widehat{G}_N$ in $\widehat{\FF_E}$ is a closed set  which we identify in a usual way with  the spectrum of the   quotient algebra 
$$
G_\infty:= \FF_E/I_{sinks}.
$$ 
We will describe a  dense subset of   $\widehat{G}_\infty$ exploiting the fact that states $\omega_\mu$ arising from $\mu \in  E^{\infty}$ can be considered as analogs of Glimm's product states for UHF-algebras, cf. e.g. \cite[6.5]{Pedersen}.
\begin{lem}\label{path representations}
For infinite paths $\mu,\nu\in E^\infty$  the  representations $\pi_\mu$ and $\pi_\nu$  are unitarily equivalent if and only if  $\mu \sim \nu$.  In particular, $
 [\mu] \mapsto \pi_\mu 
 $
 is a well defined embedding of $E^\infty/\sim$ into $\widehat{G}_\infty$. 
\end{lem}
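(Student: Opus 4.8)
The plan is to prove the two implications separately; the ``in particular'' part is then immediate. For sufficiency, suppose $\mu\sim\nu$ and pick $N$ with $\mu_k=\nu_k$ for all $k\ge N$. I would introduce the partial isometry $w:=s_{\mu_1\cdots\mu_{N-1}}s_{\nu_1\cdots\nu_{N-1}}^*\in\FF_E$, which satisfies $w^*w=s_{\nu_1\cdots\nu_{N-1}}s_{\nu_1\cdots\nu_{N-1}}^*$ and hence $\omega_\nu(w^*w)=1$. The first step is a direct check on the matrix units $s_\alpha s_\beta^*$ ($\alpha,\beta\in E^n$, $n\ge N$), which span a dense $*$-subalgebra of $\FF_E$, that $\omega_\mu(x)=\omega_\nu(w^*xw)$: conjugation by $w$ simply replaces the prefix $\mu_1\cdots\mu_{N-1}$ by $\nu_1\cdots\nu_{N-1}$ and leaves the common tail untouched, so the value $1$ is attained on exactly the same tensors on both sides. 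Granting this identity, the formula $V\big(\pi_\mu(x)\xi_\mu\big):=\pi_\nu(xw)\xi_\nu$ defines an isometry $H_\mu\to H_\nu$ (the inner products match because $\omega_\mu=\omega_\nu(w^*(\cdot)w)$), its range is dense since $\pi_\nu$ is irreducible and $\pi_\nu(w)\xi_\nu\neq 0$, and it intertwines the two representations; thus $\pi_\mu\cong\pi_\nu$.

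For necessity I would argue contrapositively and invoke the standard fact that two pure states generate unitarily equivalent (irreducible) GNS representations only if one is a vector state of the other's representation (cf. \cite{Pedersen}); so if $\pi_\mu\cong\pi_\nu$ then $\omega_\nu=\langle\pi_\mu(\cdot)\eta,\eta\rangle$ for some unit vector $\eta\in H_\mu$. Setting $P_\nu^{(n)}:=s_{\nu_1\cdots\nu_n}s_{\nu_1\cdots\nu_n}^*$ we have $\omega_\nu(P_\nu^{(n)})=1$ for all $n$, so it suffices to show $\pi_\mu(P_\nu^{(n)})\to 0$ strongly whenever $\mu\not\sim\nu$, for then $1=\langle\pi_\mu(P_\nu^{(n)})\eta,\eta\rangle\to 0$ is absurd. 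To obtain this strong convergence I would test against the dense set $\pi_\mu(\FF_E)\xi_\mu$ and compute, for a matrix unit $s_\alpha s_\beta^*$ with $\alpha,\beta\in E^m$ and $n>m$, the scalar $\|\pi_\mu(P_\nu^{(n)})\pi_\mu(s_\alpha s_\beta^*)\xi_\mu\|^2=\omega_\mu\big(s_\beta s_\alpha^* P_\nu^{(n)} s_\alpha s_\beta^*\big)$; this equals $1$ precisely when $\alpha=\nu_1\cdots\nu_m$, $\beta=\mu_1\cdots\mu_m$ and $\mu_{m+1}\cdots\mu_n=\nu_{m+1}\cdots\nu_n$, and $0$ otherwise. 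Since $\mu\not\sim\nu$ means $\mu$ and $\nu$ disagree at infinitely many coordinates, the last equality fails for all large $n$, so each of these scalars is eventually $0$; an $\varepsilon$-approximation on $\pi_\mu(\FF_E)\xi_\mu$ then gives $\pi_\mu(P_\nu^{(n)})\to 0$ strongly.

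The two implications make $[\mu]\mapsto\pi_\mu$ well defined and injective on $E^\infty/{\sim}$. To see that it lands in $\widehat{G}_\infty$, I would check that every $\pi_\mu$ with $\mu\in E^\infty$ kills $I_{sinks}$: a matrix unit $s_\alpha s_\beta^*\in G_N$ has $r(\beta)\in E^0_{sinks}$, whereas $r(\mu_1\cdots\mu_N)$ is never a sink, so $\|\pi_\mu(s_\alpha s_\beta^*)\xi_\mu\|^2=\omega_\mu(s_\beta s_\beta^*)=0$; cyclicity of $\xi_\mu$ together with $I_{sinks}$ being an ideal then upgrades this to $\pi_\mu(I_{sinks})=0$. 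Hence $\pi_\mu$ descends to an irreducible representation of $G_\infty=\FF_E/I_{sinks}$ and $[\mu]\mapsto\pi_\mu$ embeds $E^\infty/{\sim}$ into $\widehat{G}_\infty$.

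I expect the real difficulty to be the necessity, and specifically the choice of invariant. The naive idea of comparing the prefix projections $P_\mu^{(n)}$ and $P_\nu^{(n)}$ is doomed, since these become orthogonal as soon as $\mu$ and $\nu$ differ at a single early coordinate, even though such paths can be tail equivalent and thus define the same point of $\widehat{G}_\infty$. The strong-convergence argument sidesteps this by working entirely inside the fixed representation $\pi_\mu$ and using that tail inequivalence forces infinitely many coordinate mismatches --- exactly the input that drives $\pi_\mu(P_\nu^{(n)})$ to $0$ --- rather than any single finite discrepancy.
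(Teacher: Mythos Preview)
Your proof is correct. The sufficiency argument is essentially the paper's: where the paper invokes the existence of a unitary in the matrix block $\FF_N(v)$ carrying one rank-one projection to the other, you write down the explicit partial isometry $w=s_{\mu_1\cdots\mu_{N-1}}s_{\nu_1\cdots\nu_{N-1}}^*$ and verify $\omega_\mu=\omega_\nu(w^*(\cdot)w)$ by hand. (A small quibble: the matrix units $s_\alpha s_\beta^*$ with $|\alpha|=|\beta|\ge N$ span a dense subspace only after you also allow the sink terms in $\FF_n$; but both sides vanish on those, so the verification is unaffected.)

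The necessity argument, however, follows a genuinely different route. The paper runs the classical Glimm--Powers argument: it uses Kadison transitivity to get a unitary $u\in\FF_E$ with $\omega_\mu=\omega_\nu(u^*(\cdot)u)$, approximates $u$ by some $x\in\FF_n$, and exploits that the prefix projection $s_{\mu_1\cdots\mu_k}s_{\mu_1\cdots\mu_k}^*$ commutes with $\FF_n$ for $k>n$ to force a contradiction $1<1$. Your argument is more elementary in its input: you only need that $\omega_\nu$ is a vector state of $\pi_\mu$ (immediate from unitary equivalence), and then you show directly that $\pi_\mu(P_\nu^{(n)})\to 0$ strongly by testing on the cyclic orbit $\pi_\mu(\FF_E)\xi_\mu$. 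This avoids invoking Kadison transitivity, at the modest cost of a density/approximation step at the end. Both arguments ultimately hinge on the same combinatorial fact---that tail inequivalence forces coordinate mismatches at arbitrarily late positions---but they package it differently: the paper localises to a single bad index $k>n$, while you let $n\to\infty$ and use that the mismatch recurs.
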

\begin{proof}
We mimic the proof of the corresponding result  for UHF-algebras, cf. \cite[6.5.6]{Pedersen}.
 Note that if  $(\mu_{N+1},\mu_{N+2},...)= (\nu_{N+1},\nu_{N+2},...)$, then both $s_{\mu_1...\mu_N} s_{\mu_1...\mu_N}^*$ and $s_{\nu_1,...,\nu_N} s_{\nu_1,...,\nu_N}^*$ are in $F_N(v)$ where $v=r(\mu_N)$ and since $F_N(v)\cong M_n(\C)$ there is a unitary $u\in F_N(v)$ such that  $\omega_\mu(a)=\omega_{\nu}(u^*au)$ for $a\in F_N(v)$. Then automatically   $\omega_\mu(a)=\omega_{\nu}(u^*au)$ for all $a\in \FF_E$ and hence $\pi_\mu\cong\pi_\nu$.  Conversely, suppose that $\pi_\mu\cong\pi_\nu$, then, cf. \cite[3.13.4]{Pedersen}, there is a unitary $u\in \FF_E$ such that  $\omega_\mu(a)=\omega_{\nu}(u^*au)$ for all $a\in \FF_E$. For sufficiently large $n$ there is $x\in \FF_n$ with $\|u-x\|<\frac{1}{2}$ and $\|x\|\leq 1$. To get  the contradiction we assume that  $\mu_k\neq \nu_k$ for some $k>n$. The element
$
a:= s_{\mu_{1}...\mu_{k}}s_{\mu_{1}...\mu_{k}}^*\in \FF_{k}
$
commutes with all the elements from $\FF_n$. Indeed,  if $b=s_{\al}s_{\beta}^*\in \FF_{n}$,  $\al,\beta \in E^n$, then either $s_{\al f}s_{\beta f}^*=0$ for all $f\in E^{k-n}$ and then $ab=ba=0$ or $b=\sum_{f\in E^{k-n}} s_{\al f} s_{\beta f}^*$ and then 
$
ab=  s_{\al\mu_{n+1}...\mu_{k}} s_{\beta\mu_{n+1}...\mu_{k}}^*=ba.
$ 
From this it also follows that $\omega_\nu(ab)=0$ for all $b\in \FF_n$. Accordingly, $\omega_\nu(x^* ax)=\omega_\nu( ax^*x)=0$ and since $\|(u^*-x^*) au\|=\|x^* a(u-x)\|<1/2$ we get
$$
1> \omega_\nu((u^*-x^*) au) + \omega_\nu(x^* a(u-x)) = \omega_\nu(u^* au) - \omega_\nu(x^* ax)=\omega_\mu(a)=1,
$$
 an absurd.
\end{proof}
 \begin{rem}\label{remark on sinkless} The $C^*$-algebra $G_\infty$ is a  graph algebra arising from a  graph which  has no sinks. Indeed,  the saturation $\overline{E^0}_{sinks}$  of 
  $E^0_{sinks}$ (the minimal saturated set containing $ E^0_{sinks}$) is the hereditary and saturated set corresponding to  the ideal $I_{sinks}$ in $\FF_E$. Hence   $I_{sinks}=\FF_{\overline{E^0}_{sinks}}$ and
$$
G_\infty\cong \FF_{E^0_{sinkless}} \quad\textrm{ where } \quad E^0_{sinkless}:=E^0\setminus\overline{E^0}_{sinks}.
$$
\end{rem}

Let us now treat $\mu\in E^\infty$ as the full subdiagram of the Bratelli diagram $\Lambda(E)$ where  the only vertex on the $n$-th level is $r(\mu_n)$. Similarly, we treat $\mu\in E^N_{sinks}$ as the full subdiagram of $\Lambda(E)$ where on the $n$-th level for $n\leq N$ is $r(\mu_n)$   and for  $n>N$ is $r(\mu)^{(N)}$, cf. notation in subsection \ref{graph core subsection}. For any $\mu \in \Omega_E$ we  denote by $W(\mu)$  the full subdiagram of  $\Lambda(E)$  consisting of  all ancestors of the base vertices of $\mu\subset \Lambda(E)$.
\begin{lem}\label{lemma on diagrams for kernels}
For any $\mu \in \Omega_E$ the Bratteli subdiagram $\Lambda(\ker\pi_\mu)$ of $\Lambda(E)$ corresponding to  $\ker\pi_\mu$ is  $\Lambda(E)\setminus W(\mu)$.
\end{lem}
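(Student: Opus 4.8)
The plan is to translate the statement into the language of central projections and then read off the answer from an explicit description of the GNS space of $\omega_\mu$. Recall that under the correspondence between ideals of the AF-algebra $\FF_E$ and subdiagrams of $\Lambda(E)$ (cf. \cite[3.3]{Bratteli}), a vertex sitting at level $n$ and labelled by $v$ belongs to $\Lambda(\ker\pi_\mu)$ precisely when the corresponding simple summand $\FF_n(v)$ is contained in $\ker\pi_\mu$. Since $\FF_n(v)$ is a full matrix algebra, this happens if and only if its unit, the central projection $q_{n,v}:=\sum_{\alpha\in E^n,\, r(\alpha)=v}s_\alpha s_\alpha^*$, is annihilated by $\pi_\mu$. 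Thus the whole lemma reduces to the claim that, for a level-$n$ vertex $v$, one has $\pi_\mu(q_{n,v})\neq 0$ if and only if $v\in W(\mu)$, and I would devote the proof to establishing exactly this equivalence.

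First I would reduce the question to a single finite level. As the cyclic vector $\Omega_\mu$ generates $\pi_\mu$ and $\FF_E=\overline{\bigcup_m\FF_m}$, the subspaces $\pi_\mu(\FF_m)\Omega_\mu$, $m\geq n$, are dense, so $\pi_\mu(q_{n,v})\neq 0$ if and only if $q_{n,v}$ acts nontrivially on $\pi_\mu(\FF_m)\Omega_\mu$ for some $m\geq n$. Here the product-state nature of $\omega_\mu$ enters decisively: by \eqref{product states equation}, $\omega_\mu$ vanishes on every summand $\FF_m(w)$ except the one indexed by the base vertex $w_m$ of $\mu$ at level $m$ (namely $w_m=r(\mu_m)$ for $\mu\in E^\infty$, and the tail vertex $r(\mu)^{(N)}$ once $m>N$ for $\mu\in E^N_{sinks}$), whence $\pi_\mu(\FF_m)\Omega_\mu=\pi_\mu(\FF_m(w_m))\Omega_\mu$. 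A direct computation with \eqref{product states equation} then shows that the vectors $\xi_\gamma:=\pi_\mu(s_\gamma s_{\mu_1\cdots\mu_m}^*)\Omega_\mu$, indexed by those $\gamma\in E^m$ with $r(\gamma)=w_m$, form an orthonormal basis of this subspace, on which $\pi_\mu$ realises $\FF_m(w_m)$ in its standard representation.

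The key computation is the action of $q_{n,v}$ on this basis. Using the relations for $s^*_\alpha s_\gamma$ recorded after \eqref{C-K2}, only the length-$n$ prefix of $\gamma$ can contribute, which yields $q_{n,v}\xi_\gamma=\xi_\gamma$ when $r(\gamma_n)=v$ and $q_{n,v}\xi_\gamma=0$ otherwise. Consequently $\pi_\mu(q_{n,v})$ is nonzero on the level-$m$ subspace exactly when there is a path $\gamma\in E^m$ with $r(\gamma)=w_m$ passing through $v$ at position $n$, i.e. exactly when there is a path of length $m-n$ in $E$ from $v$ to $w_m$. Letting $m$ range over all levels $\geq n$, this says precisely that $v$ is an ancestor in $\Lambda(E)$ of some base vertex of $\mu$, that is $v\in W(\mu)$. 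Hence $\pi_\mu(q_{n,v})\neq 0$ if and only if $v\in W(\mu)$, and passing to complements gives $\Lambda(\ker\pi_\mu)=\Lambda(E)\setminus W(\mu)$.

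The step I expect to require the most care is the bookkeeping of base vertices and ancestors for finite paths $\mu\in E^N_{sinks}$: past level $N$ the summand $\FF_N(r(\mu))$ no longer subdivides but persists as the tail $r(\mu)^{(N)}$, so the basis $\{\xi_\gamma\}$ must be replaced by one indexed by $\alpha\in E^N$ with $r(\alpha)=r(\mu)$, and one has to check that admitting the levels $m>N$ produces no ancestors of $\mu$ beyond the ancestors of the sink $r(\mu)=w_N$ already accounted for. Only then is the combinatorial condition ``$v$ is an ancestor of a base vertex of $\mu$'' faithfully reproduced in both the infinite and the sink case, completing the identification.
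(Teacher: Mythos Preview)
Your argument is correct. The approach, however, differs from the paper's own one-line proof, which simply invokes Bratteli's description of primitive ideals in AF-algebras \cite[3.8]{Bratteli}: since $\ker\pi_\mu$ is the largest ideal contained in $\ker\omega_\mu$, and $\omega_\mu$ is manifestly nonzero precisely on the simple summands $\FF_n(w_n)$ along the base vertices of $\mu$, the subdiagram of $\ker\pi_\mu$ is immediately read off as the complement of $W(\mu)$.

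Your route is genuinely different: rather than citing the abstract classification of primitive ideals, you build the GNS space of $\omega_\mu$ level by level, exhibit the basis $\{\xi_\gamma\}$, and compute $\pi_\mu(q_{n,v})$ directly. This buys self-containment (no appeal to \cite[3.8]{Bratteli}) and gives an explicit picture of the representation $\pi_\mu$ that could be reused elsewhere; the price is length and the case-splitting for $\mu\in E^N_{sinks}$ that you correctly flag. The paper's argument buys brevity but imports the structure theory wholesale. Both reach the same combinatorial criterion---a level-$n$ vertex $v$ survives in $\pi_\mu$ if and only if it is connected in $\Lambda(E)$ to some base vertex of $\mu$---so the two arguments are equivalent in content, differing only in where the work is done.
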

\begin{proof} The assertion follows immediately from the form of primitive ideal subdiagrams, see \cite[3.8]{Bratteli}, definition \eqref{product states equation} of $\omega_\mu$ and the fact that $\ker\pi_\mu$ is the largest ideal contained in $\ker\omega_\mu$.
\end{proof}
\begin{lem}
The mapping $
 [\mu] \mapsto \pi_\mu 
\in \widehat{\FF_E} $
 is a homeomorphism from $\Omega_E/\sim$ onto its image.\end{lem}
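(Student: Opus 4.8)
The plan is to regard the map $[\mu]\mapsto\pi_\mu$, which I denote $\Phi$, as a bijection onto its image and then verify separately that it is continuous and open, since a continuous open bijection onto its image is automatically a homeomorphism onto that image. That $\Phi$ is a well-defined bijection onto its image has essentially been assembled already: Lemma \ref{path representations} gives well-definedness and injectivity on $E^\infty/\sim$, the discrete identification $\widehat{I}_{sinks}=\bigcup_N\widehat{G}_N\cong\bigcup_N E^N_{sinks}/\sim$ handles the sink classes, and the two families cannot collide because $I_{sinks}\subseteq\ker\pi_\mu$ for $\mu\in E^\infty$ (no base vertex $r(\mu_m)$ is a sink, so no sink block lies in $W(\mu)$), whereas $\pi_\mu(I_{sinks})\neq0$ for $\mu\in\bigcup_N E^N_{sinks}$. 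Thus only the topological statement remains.

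The technical heart is a single identity linking the basis $\{U_{v,n}\}$ of $\Omega_E/\sim$ (Lemma \ref{lemma on quotient space}) with the Jacobson topology of $\widehat{\FF_E}$. For $v\in r(E^n)$ let $Q_{n,v}:=\sum_{\mu\in E^n,\,r(\mu)=v}s_\mu s_\mu^*$ be the unit of the block $\FF_n(v)$. By Lemma \ref{lemma on diagrams for kernels}, $\Lambda(\ker\pi_\mu)=\Lambda(E)\setminus W(\mu)$, so $\pi_\mu(Q_{n,v})\neq0$ if and only if the vertex $v$ placed at level $n$ belongs to $W(\mu)$, i.e. $v$ is joined in $\Lambda(E)$ by a finite downward path to a base vertex of $\mu$. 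Unwinding the definition of $U_{v,n}$ shows that this is exactly the condition $[\mu]\in U_{v,n}$, the multiplicity-one tails below sinks contributing no new ancestors so that the equivalence holds uniformly for sink and infinite paths. Hence $U_{v,n}=\{[\mu]:\pi_\mu(Q_{n,v})\neq0\}$, and since the set $\{\pi\in\widehat{\FF_E}:\pi(Q_{n,v})\neq0\}=\widehat{I_{n,v}}$, for the ideal $I_{n,v}$ generated by the projection $Q_{n,v}$, is open, we obtain each $U_{v,n}$ as the preimage of a basic open set.

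Openness onto the image is then immediate: the image of the basic open set $U_{v,n}$ is $\{\pi_\mu:\pi_\mu(Q_{n,v})\neq0\}=\Phi(\Omega_E/\sim)\cap\widehat{I_{n,v}}$, which is relatively open, and since the $U_{v,n}$ form a basis this shows that images of all open sets are relatively open. For continuity I would use that the open sets of $\widehat{\FF_E}$ are exactly the sets $\widehat{I}$ for ideals $I$, together with the Bratteli correspondence $I\leftrightarrow\Lambda(I)$ under which inclusion of ideals matches inclusion of subdiagrams. Consequently $I\subseteq\ker\pi_\mu$ iff $\Lambda(I)\cap W(\mu)=\emptyset$, so $\Phi^{-1}(\widehat{I})=\{[\mu]:\pi_\mu(I)\neq0\}=\{[\mu]:\Lambda(I)\cap W(\mu)\neq\emptyset\}=\bigcup_{(n,v)\in\Lambda(I)}U_{v,n}$ is open. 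This yields continuity and completes the homeomorphism onto the image.

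The main obstacle is the bookkeeping in the central identity $U_{v,n}=\{[\mu]:(n,v)\in W(\mu)\}$: one must match the level and indexing conventions in the definition of $U_{v,n}$ against the description of $W(\mu)$ as the set of ancestors of the base vertices of $\mu$, and treat the sink tails of $\Lambda(E)$ carefully so that the identity is genuinely uniform over all of $\Omega_E$. Everything else is a formal consequence of the already-established kernel-diagram description and the standard ideal-subdiagram correspondence for AF algebras.
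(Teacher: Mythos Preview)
Your proof is correct and follows essentially the same route as the paper: both arguments hinge on identifying $U_{v,n}$ with the trace on the image of the basic open set of $\widehat{\FF_E}$ attached to the vertex $(n,v)$ of the Bratteli diagram, via Lemma~\ref{lemma on diagrams for kernels} (your $\widehat{I_{n,v}}$ is exactly the paper's $\widehat{J}_{v,n}$, since the ideal generated by $Q_{n,v}$ has Bratteli diagram $\Lambda_{v,n}$). The only difference is cosmetic: the paper first disposes of the discrete sink part and reduces to a sinkless graph via $G_\infty\cong\FF_{E^0_{sinkless}}$, whereas you work uniformly over all of $\Omega_E/\sim$ and absorb the sink tails into the bookkeeping you already flag.
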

\begin{proof}
We already know that $
 [\mu] \mapsto \pi_\mu 
 $ is injective and restricts to homeomorphism between discrete spaces $(\Omega_E\setminus E^{\infty})/\sim$  and $\widehat{\FF_E}\setminus \widehat{G}_\infty$.  Hence it suffices to prove that $
 [\mu] \mapsto \pi_\mu 
 $ is continuous and open when considered as a mapping from $E^\infty/\sim$ onto $\{\pi_\mu: [\mu]\in E^\infty/\sim\}\subset \widehat{G}_\infty$.
 To this end, we may assume that $E$ has no sinks, cf. Remark \ref{remark on sinkless}. Suppose then that $E$ has no sinks.
 
   Any  open set in $\widehat{\FF_E}$ is of the form 
$
\widehat{J}=\{\pi \in \widehat{\FF_E}:  \ker \pi \nsupseteq J\}=\{\pi \in \widehat{\FF_E}:  \Lambda(J)\setminus \Lambda(\ker\pi) \neq \emptyset\}
$
where $J$ is an  ideal in $\FF_E$ or equivalently $\Lambda(J)$ is a hereditary and saturated subdiagram of $\Lambda(E)$. It follows that if we denote by $\Lambda_{v,n}$ the smallest hereditary and saturated subdiagram of $\Lambda(E)$ which  on the $n$-th level contains vertex $v$,  then the sets
$$
\widehat{J}_{v,n}:=\{\pi \in \widehat{\FF_E}: \Lambda_{v,n}\setminus \Lambda(\ker\pi) \neq \emptyset\}, \qquad v\in E^0, n\in \N, 
$$
form a basis for the topology of $\widehat{\FF_E}$. Moreover, in view of Lemma \ref{lemma on diagrams for kernels}, definitions of $\Lambda_{v,n}$, $W(\mu)$ and form of 
$U_{v,n}$, see \eqref{basis for quotients}, the preimage of $
\widehat{J}_{v,n}$ under the map $
 [\mu] \mapsto \pi_\mu 
 $ is
 $\{[\mu] \in \Omega_E/\sim:  \Lambda_{v,n} \cap W(\mu)\neq \emptyset\}
 = \{[\mu] \in \Omega_E: \exists_{\nu \in E^{k}, k\in \N} \,\, s(\nu)=v, r(\nu)=\mu_{n+k}\}
=U_{v,n}$.

 Thus, in view of Lemma \ref{lemma on quotient space}, we see that  $
 [\mu] \mapsto \pi_\mu 
 $ establishes one-to-one correspondence between the topological bases for its domain and codomain and hence is a  homeomorphism onto codomain.
\end{proof}
Now, to obtain Theorem \ref{shifts of path representations}  we only need the following
\begin{lem}
The mapping $
 [\mu] \mapsto \pi_\mu 
\in \widehat{\FF_E} $
 intertwines $[\sigma_E]$ and $\widehat{\VV}$.\end{lem}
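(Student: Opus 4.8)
The plan is to prove the equivalent pointwise statement $\widehat{\VV}(\pi_\mu)=\pi_{\sigma_E(\mu)}$ for every $\mu\in\Omega_E\setminus E^0_{sinks}$; since the previous lemmas already give that $[\mu]\mapsto\pi_\mu$ is a homeomorphism onto its dense image and $[\sigma_E][\mu]=[\sigma_E(\mu)]$, this is exactly the asserted intertwining. First I would record the description of $\widehat{\VV}$ dual to $\widehat{\HH}$: interchanging the roles of $\VV$ and $\HH$ in Remark~\ref{dual remarks}, for an irreducible $\pi$ with $\pi(\VV(1))\neq0$ the representation $\widehat{\VV}(\pi)$ is the unique extension to $\FF_E$ of $\pi\circ\VV\colon\HH(\FF_E)\to\B(\pi(\VV(1))H)$ acting on the corner subspace $\pi(\VV(1))H$. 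Thus it suffices to compare this corner representation with the analogous corner restriction of $\pi_{\sigma_E(\mu)}$.

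Before comparing, I would verify the two domain conditions. Using \eqref{product states equation} together with $\VV(1)=ss^*$ and $\HH(1)=s^*s=\sum_{v\in r(E^1)}p_v$, one computes directly that $\omega_\mu(\VV(1))=1/n_{r(\mu_1)}>0$ and, since $s(\sigma_E(\mu))=r(\mu_1)\in r(E^1)$ is never a source, $\omega_{\sigma_E(\mu)}(\HH(1))=1$. Hence $\pi_\mu(\VV(1))\neq0$ and $\pi_{\sigma_E(\mu)}(\HH(1))\neq0$, so both $\widehat{\VV}(\pi_\mu)$ and the corner restriction of $\pi_{\sigma_E(\mu)}$ are genuinely defined. (Here $\mu_1$ exists precisely because $\mu\notin E^0_{sinks}$.)

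The heart of the argument is the scalar identity
\[
\omega_{\sigma_E(\mu)}(a)=n_{r(\mu_1)}\,\omega_\mu(\VV(a)),\qquad a\in\FF_E .
\]
I would establish this on the spanning elements $a=s_\alpha s_\beta^*$, $\alpha,\beta\in E^k$, by inserting \eqref{endomorphism action on F_A} and \eqref{product states equation}: $\omega_\mu(\VV(s_\alpha s_\beta^*))$ is nonzero only when $\alpha=\beta=\mu_2\cdots\mu_{k+1}$ (the single surviving index being $e=f=\mu_1$), in which case it equals $1/n_{s(\alpha)}=1/n_{r(\mu_1)}$; on the other side $\omega_{\sigma_E(\mu)}(s_\alpha s_\beta^*)$ equals $1$ under exactly the same condition and $0$ otherwise, while the degenerate cases ($s(\alpha)$ a source, or $\mu$ too short) give $0$ on both sides. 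The length-zero case $a=p_v$ is checked the same way. Continuity then extends the identity to all of $\FF_E$.

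Finally I would conclude by GNS-uniqueness. Let $\xi_\mu$ and $\xi_{\sigma_E(\mu)}$ be the cyclic vectors of the two GNS representations. The nonzero vectors $\pi_\mu(\VV(1))\xi_\mu$ and $\pi_{\sigma_E(\mu)}(\HH(1))\xi_{\sigma_E(\mu)}$ are cyclic for the respective (irreducible) corner representations of $\HH(\FF_E)$, and the associated vector states are, for $a\in\HH(\FF_E)$, equal to $n_{r(\mu_1)}\omega_\mu(\VV(a))$ and $\omega_{\sigma_E(\mu)}(a)$ respectively (using $\VV(1)\VV(a)\VV(1)=\VV(a)$ and $\HH(1)a\HH(1)=a$); these agree by the displayed identity. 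Hence the two corner representations of $\HH(\FF_E)$ are unitarily equivalent, and since each of $\widehat{\VV}(\pi_\mu)$ and $\pi_{\sigma_E(\mu)}$ is the unique irreducible extension of its corner restriction to $\FF_E$, we obtain $\widehat{\VV}(\pi_\mu)=\pi_{\sigma_E(\mu)}$. The main obstacle is the bookkeeping: pinning down the correct corner subspace and restriction that represent $\widehat{\VV}(\pi_\mu)$ and reducing the desired unitary equivalence to the single scalar identity above; once that reduction is in place the computation is routine.
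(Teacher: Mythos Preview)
Your proof is correct and follows essentially the same route as the paper: the heart of both arguments is the scalar identity $\omega_\mu\circ\VV=\frac{1}{n_{r(\mu_1)}}\,\omega_{\sigma_E(\mu)}$, established on the spanning elements $s_\alpha s_\beta^*$ via \eqref{endomorphism action on F_A} and \eqref{product states equation}. The only differences are cosmetic: the paper splits into cases, handling finite paths $\mu\in E^N_{sinks}$ by a direct appeal to the factor structure \eqref{formulas for shifts on diagrams} and reserving the state computation for $\mu\in E^\infty$, whereas you treat both uniformly; and where the paper concludes from the state identity by citing \cite[Cor.~3.3.8]{Pedersen}, you unpack the same step explicitly through the GNS construction and the hereditary-corner bijection $\widehat{\HH(\FF_E)}\hookrightarrow\widehat{\FF_E}$.
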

\begin{proof}

 To see that  $\widehat{\VV(\FF_E)}=\{\pi \in \widehat{\FF_E}: \pi(\VV(1))\neq 0\}$  coincides with $\Delta=\widehat{\FF_E}\setminus \widehat{G}_0$ let $\pi \in \widehat{\FF_E}$ and note that  
  $$
 \pi(\VV(1))= 0 \Longleftrightarrow \forall_{v \in s(E^1)}\, \pi(p_v)=0  \Longleftrightarrow \exists_{w \in E^0_{sinks}} \pi\cong \pi_{w}.
 $$ 
 Furthermore, by \eqref{endomorphism action on F_A} and \eqref{transfer action on F_A},  
 we have
\begin{equation}\label{formulas for shifts on diagrams}
\VV(\FF_N(v))=\VV(1) \FF_{N+1}(v) \VV(1),\quad \HH(\FF_{N+1}(v))=\FF_{N}(v), \quad N\in\N,
\end{equation}
 and  $\HH(\FF_0(v))\subset \sum\limits_{w\in r(s^{-1}(v))} \FF_{0}(w)$.
In particular, for $\mu\in E^N_{sinks}$, $N>0$,  we have  $\pi_{\mu}\in \Delta$   and  
$$
(\pi_{\mu}\circ \VV)(\FF_{N-1}(w))=\pi_{\mu}(\VV(1)\FF_{N}(w)\VV(1))\neq 0.
$$
Hence $\widehat{\VV}(\pi_{\mu}) \cong \pi_{\sigma_E(\mu)}$.
Let us now fix $\mu=\mu_1\mu_{2}\mu_3...\in E^\infty$. Let   $H_\mu$ be the Hilbert space  and 
 $\xi_\mu\in H_\mu$  the cyclic vector associated to the pure state $\omega_\mu$ via GNS-construction.
For  $\nu,\eta\in E^n$, using  \eqref{endomorphism action on F_A} and  \eqref{product states equation},  we get
\begin{align*}
\omega_\mu(\VV(s_\nu s_\eta^*))&=\begin{cases} \frac{1}{ \sqrt{n_{s(\nu)}n_{s(\eta)}}} \sum\limits_{e,f\in E^1}\omega_\mu( s_{e\nu} s^*_{f\eta}), &  n_{s(\nu)}n_{s(\eta)}\neq 0, \\
 0 , &  n_{s(\nu)}n_{s(\eta)}= 0, 
 \end{cases}
 \\
&=\begin{cases}  \frac{1}{ n_{r(\mu_1)}} ,   & \nu=\eta=\mu_2...\mu_{n+1} \\
0 & \textrm{otherwise} 
 \end{cases}
 \\
 &=\frac{1}{n_{r(\mu_1)}}\, \omega_{\sigma_E(\mu)}(s_\nu s_\eta^*).
\end{align*}
It follows that $
 \omega_\mu\circ \VV = \frac{1}{n_{r(\mu_1)}}\, \omega_{\sigma_E(\mu)}
 $ and  therefore $\widehat{\VV}(\pi_\mu)\cong \pi_{\sigma_E(\mu)}$, cf.  \cite[Corollary  3.3.8]{Pedersen}. $\qquad$ $\qquad$
 \end{proof}

 \subsection{Topological freeness of graph interactions.}\label{final subsection}
We will now use Theorem \ref{shifts of path representations} to identify the relevant properties of the partial homeomorphism $\widehat{\VV}$ dual to the graph interaction $(\VV,\HH)$. %In particular, we recover the corresponding classical results from completely new perseptive and deduce it from   our general result ... .
We recall that the condition (L) introduced in \cite{kum-pask-rae}  requires that every loop in $E$ has an exit. For convenience, by loops we will mean simple loops, that is paths $\mu=\mu_1...\mu_n$ such that $s(\mu_1)=r(\mu_n)$ and $s(\mu_1)\neq r(\mu_k)$ for $k=1,...,n-1$. A loop $\mu$ is said to have an exit  if  there is an edge $e$ such that $s(e)=s(\mu_i)$ and $e\neq \mu_i$ for some $i=1,...,n$.

\begin{prop}\label{every loop  has an exit}
Suppose that every loop in $E$ has an exit. Then every  open set  intersecting $\widehat{G}_\infty$ contains infinitely many  non-periodic points for $\widehat{\VV}$ and if $E$ has no sinks the number of this non-periodic points is uncountable. In particular, $\widehat{\VV}$ is topologically free.
\end{prop}
\begin{proof} By Theorem \ref{shifts of path representations} and Lemma \ref{lemma on quotient space} it suffices to consider the dynamical system $(\Omega_E/\sim,[\sigma_E])$ and an open set of the form
$$
U_{n,v}=\{[\mu]: \exists_{\eta \in E^{k}, k\in \N} \,\, s(\eta)=v, r(\eta)=\mu_{n+k}\}
$$
which contains $[\mu]$ for $\mu=\mu_1\mu_2... \in E^\infty$. 
 Since  $E$ is finite there must be  a vertex $v$ which appears as a base point of $\mu$ infinitely many times. Namely, there exists an increasing  sequence $\{n_k\}_{k\in \N}\subset \N$ such that $r(\mu_{n_k})=v$ for all $k\in \N$. Moreover, since every loop in $E$ has an exit, the vertex $v$ has to be connected either to a sink or to a vertex lying on two different loops. Let us consider these two cases: 

1)  Suppose $\nu$ is a finite path such that  $v=s(\nu)$ and $w:=r(\nu)\in E^{0}_{sinks}$.  Consider the family of finite, and hence non-periodic, paths 
$$
\mu^{(n_k)}:=\mu_1...\mu_{n_k}\nu \in E_{sinks}^{n+|\nu|}, \qquad k\in \N.
$$
Plainly, all  except finitely many of  elements $[\mu^{(n_k)}]$ belong to  $U_{n,v}$ (and they are all different).

2) Suppose  $\nu$ is a finite path such that  $v=s(\nu)$ and the vertex $w:=r(\nu)$ is a base point for two different loops $\mu^0$ and $\mu^1$.
 We put  $\mu^\epsilon=\mu^{\epsilon_1}\mu^{\epsilon_2}\mu^{\epsilon_3}...\in E^\infty$ for $\epsilon=\{\epsilon_i\}_{i=1}^\infty\in \{0,1\}^{\N\setminus \{0\}}$. 
   Since there is an uncountable number of non-periodic sequences in $\{0,1\}^{\N\setminus \{0\}}$ which pair-wisely do not eventually  coincide the paths $\mu^\epsilon$ corresponding to these sequences give rise to the uncountable family of  non-periodic elements $[\mu^\epsilon]$ in $\Omega_E/\sim$. Moreover, one readily sees that for sufficiently large $n_k$ all the equivalent classes of paths
   $$
   \mu^{(\epsilon)}:=\mu_1...\mu_{n_k}\nu\mu^\epsilon \in E^{\infty}, \qquad \epsilon=\{\epsilon_i\}_{i=1}^\infty\in \{0,1\}^{\N\setminus \{0\}}
   $$
   belong to $U_{n,v}$. This proves our assertion.
\end{proof}
\begin{ex}\label{remark on topological freeness}
In the case $C^*(E)=\OO_n$  is the Cuntz algebra, that is when  $E$ is the graph with a single vertex and $n$ edges, $n\geq 2$, then   $\FF_E$ is an UHF-algebra and the states $\omega_\mu$ are simply Glimm's product states. In particular, it is well known that   $\Prim(\FF_E)=\{0\}$ and $\widehat{\FF_E}$ is uncountable, cf.  \cite[6.5.6]{Pedersen}. Hence, on one hand  the Rieffel homeomorphism  given by the imprimitivity $\FF_E$-bimodule $X=\FF_E s \FF_E$ associated with the graph interaction $(\VV,\HH)$ is the identity on $\Prim(\FF_E)$, and thereby it is not topologically free (\cite[Theorem 6.5]{kwa-demon} can not be applied).  On the other hand, we have just shown that  $\widehat{\FF_E}$ contains  uncountably many non-periodic points for $X\dashind=\widehat{\VV}^{-1}$, cf. Proposition \ref{coincidence of actions}, and hence it
 is  topologically free.
\end{ex}

Suppose now that  $\mu$ is a loop in $E$. Let $\mu^\infty \in E^\infty$ be the path obtained by the infinite concatenation of $\mu$.   Then   $\Lambda(E)\setminus W(\mu^\infty)$ is a Bratteli diagram for a primitive ideal in $\FF_E$, which we  denote by  $I_\mu$. In other words, see Lemma \ref{lemma on diagrams for kernels}, we have
$$
I_\mu=\ker\pi_{\mu^\infty}
$$ 
 where $\pi_{\mu^\infty}$ is the irreducible representation associated to $\mu^\infty$.
\begin{prop}\label{no exit no freedom}
 If the loop $\mu$  has no exits, then up to unitary equivalence $\pi_{\mu^\infty}$ is the only     representation  of $\FF_E$ whose kernel  is
$I_\mu$ and  the singleton $\{\pi_{\mu^\infty}\}$ is an open set in  $\widehat{\FF_E}$. 
\end{prop}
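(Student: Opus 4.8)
The plan is to reduce both assertions to the construction of a single ideal. Concretely, I would exhibit a nonzero ideal $J\subseteq \FF_E$ whose spectrum, viewed in the usual way as an open subset of $\widehat{\FF_E}$, is the singleton $\{\pi_{\mu^\infty}\}$. This at once yields both claims. On the one hand, the spectrum of an ideal is open in $\widehat{\FF_E}$, so $\{\pi_{\mu^\infty}\}$ is open. On the other hand, if $\sigma$ is any irreducible representation with $\ker\sigma=I_\mu=\ker\pi_{\mu^\infty}$, then, since $\pi_{\mu^\infty}\in\widehat{J}$ forces $J\not\subseteq\ker\pi_{\mu^\infty}=\ker\sigma$, the representation $\sigma$ does not annihilate $J$ and hence lies in $\widehat{J}=\{\pi_{\mu^\infty}\}$; thus $\sigma\cong\pi_{\mu^\infty}$.

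To build $J$ I would work inside the Bratteli diagram $\Lambda(E)$ and use the correspondence between ideals of $\FF_E$ and hereditary saturated subdiagrams \cite[3.3]{Bratteli}; by Remark \ref{remark on sinkless} there is no loss in assuming the relevant part of $E$ has no sinks. Let $P\subset\Lambda(E)$ be the base path of $\mu^\infty$, i.e. the subdiagram having exactly one vertex $r(\mu_n)$ on each level $n$. The crucial use of the no-exit hypothesis is that every loop vertex $v=s(\mu_i)$ satisfies $s^{-1}(v)=\{\mu_i\}$, so in $\Lambda(E)$ the unique descendant of the base vertex at level $n$ is the base vertex at level $n+1$; hence $P$ is hereditary. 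I would then let $\overline{P}$ be its saturation and put $J:=\FF_{\overline{P}}$, the corresponding ideal, which is nonzero since $P\neq\emptyset$.

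It remains to prove that $\overline{P}$ carries a unique infinite end, namely $\mu^\infty$, so that $\widehat{J}=\{\pi_{\mu^\infty}\}$; this is the heart of the argument. Writing the saturation as $\overline{P}=\bigcup_k S_k$ with $S_0=P$ and $S_{k+1}=S_k\cup\{u:\ \text{all descendants of }u\text{ lie in }S_k\}$, I would show by induction on $k$ that every forward path issuing from a vertex of $S_k$ meets $P$ within $k$ steps. Once a path reaches a base vertex, the no-exit condition (the loop vertex has a single outgoing edge) forces it to continue along $P$, i.e. to coincide with $\mu^\infty$ thereafter. Since any infinite path in $\overline{P}$ starts at some vertex of a fixed $S_{k_0}$, it must equal $\mu^\infty$ eventually, so all infinite paths of $\overline{P}$ are tail-equivalent to $\mu^\infty$; by Lemma \ref{lemma on diagrams for kernels} together with the spectral description of $\widehat{\FF_E}$ recalled above this gives $\widehat{J}=\{\pi_{\mu^\infty}\}$. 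The main obstacle is exactly this last step: controlling the vertices added by saturation and ruling out a stray infinite side-branch of $\overline{P}$ that never reaches the loop. The inductive ``reaches $P$ in at most $k$ steps'' estimate, fed by the no-exit property (used both to make $P$ hereditary and to pin a path to $\mu^\infty$ once it hits the loop), is what closes this gap.
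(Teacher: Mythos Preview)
Your overall strategy is sound and is essentially the same as the paper's: both arguments single out the ideal $J$ (the paper calls it $\K$) given by the saturation $\overline{P}$ of the spine $P=\mu^\infty$ in $\Lambda(E)$, and both use that the no-exit hypothesis makes $P$ hereditary. Your reduction ``$\widehat{J}=\{\pi_{\mu^\infty}\}$ $\Rightarrow$ both assertions'' is also correct.

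The gap is in the last step. You argue that every infinite path in $\overline{P}$ is tail-equivalent to $\mu^\infty$ and then invoke Lemma \ref{lemma on diagrams for kernels} and ``the spectral description of $\widehat{\FF_E}$'' to conclude $\widehat{J}=\{\pi_{\mu^\infty}\}$. But Theorem \ref{shifts of path representations} only says that the path-representations $\pi_\nu$ form a \emph{dense} subset of $\widehat{\FF_E}$, not all of it; so your argument yields merely $\widehat{J}\cap\{\pi_\nu:\nu\in\Omega_E\}=\{\pi_{\mu^\infty}\}$. In general, ``unique infinite path up to tail equivalence'' in a Bratteli diagram does \emph{not} force a unique irreducible representation: a UHF algebra has one vertex per level (hence a single infinite path) yet uncountably many inequivalent irreducibles. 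What rescues the situation here, and what your write-up does not use, is that the edges along the spine $P$ have multiplicity one (again by no-exit). This makes $p_{s(\mu_1)}=s_{\mu_1}s_{\mu_1}^*=s_{\mu_1\mu_2}s_{\mu_1\mu_2}^*=\cdots$ a minimal projection in every $\FF_N$, hence in $J$, with $p_{s(\mu_1)}Jp_{s(\mu_1)}=\C$. Combined with simplicity of $J$ (which your ``every path hits $P$ and stays there'' argument does establish, via the ideal/subdiagram correspondence), this gives $J\cong\K(H)$ and thus $\widehat{J}$ is a singleton. The paper compresses exactly this into the sentence ``since $\mu^\infty$ has no exits, $\K$ is isomorphic to the ideal of compact operators $\K(H)$,'' and then deduces both uniqueness and openness from it.
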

\begin{proof}
 The quotient $\FF_E/ I_\mu$ is an AF-algebra with the  diagram $W(\mu^\infty)$. 
  The path $\mu^\infty$ treated as a subdiagram of $W(\mu^\infty)$ is  hereditary  and its saturation $\overline{\mu^\infty}$ yields an ideal $\K$ in  $\FF_E/ I_\mu$. Since $\mu^\infty$ has no exits,  $\K$ is isomorphic to the ideal of compact operators $\K(H)$ on a separable Hilbert space $H$  (finite or infinite dimensional).  
 Therefore every faithful irreducible representation of  $\FF_E/ I_\mu$ is unitarily equivalent to the representation given by the isomorphism $\K\cong\K(H)\subset \B(H)$. This shows that $\pi_{\mu^\infty}$ is determined by its kernel. Moreover, since $W(\mu^\infty)$ contains all its ancestors, the subdiagram    $\overline{\mu^\infty}$  is  hereditary and saturated not only in $W(\mu^\infty)$ but also in $\Lambda(E)$. Therefore   we let now $\K$ stand for  the ideal in $\FF_E$, corresponding to  $\overline{\mu^\infty}$. Let $P\in \Prim(\FF_E)$.  As $\K$ is simple   $P \nsupseteq \K$ implies $\K\cap P=\{0\}$. By the form of $W(\mu^\infty)$ and  hereditariness of $\Lambda(P)$, $\K\cap P=\{0\}$ implies $\Lambda(P)\subset \Lambda(\FF_E)\setminus W(\mu^\infty)=\Lambda(I_\mu)$. However, if  $P\subset I_\mu$, we must have $P=I_\mu$ because no part of $\Lambda(I_\mu)$ is not connected to $W(\mu^\infty)$ (consult the form of diagrams of primitive ideals  \cite[3.8]{Bratteli}). Concluding,   we get  
$$
\{P \in \Prim(\FF_E): P \nsupseteq \K\} =\{P \in \Prim(\FF_E): \K\cap P=\{0\}\} = \{I_\mu\},  
$$
 which means that $\{I_\mu\}$ is open in $\Prim(\FF_E)$. Accordingly,   $\{\pi_{\mu^\infty}\}$ is  open  in  $\widehat{\FF_E}$.
\end{proof}
We have the following characterization of  minimality of $\widehat{\VV}$.

\begin{prop}\label{invariant Bratteli diagrams}
The map 
$
V \mapsto \widehat{\FF_{\Lambda(V)}}
$
is a one-to-one correspondence between the hereditary saturated subsets of $E^0$  and  $\widehat{\VV}$-invariant open subsets of $\widehat{\FF_E}$. In particular,
  $\widehat{\VV}$ is minimal   if and only if there are no nontrivial hereditary saturated subsets of $E^0$.
\end{prop}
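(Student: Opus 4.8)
The plan is to transport the whole question to the Bratteli diagram $\Lambda(E)$ and the level--shift formulas \eqref{formulas for shifts on diagrams}, using the abstract invariance criterion of Lemma \ref{invariant ideals for endomorophisms}. Recall two standard lattice isomorphisms for the AF--algebra $\FF_E$: ideals $I\trianglelefteq\FF_E$ correspond to open subsets of $\widehat{\FF_E}$ via $I\mapsto\widehat I$, and (Bratteli, \cite[3.3]{Bratteli}) ideals correspond to hereditary saturated subdiagrams of $\Lambda(E)$. By Lemma \ref{invariant ideals for endomorophisms}, $\widehat I$ is $\widehat{\VV}$--invariant if and only if $\VV(I)\subset I$ and $\HH(I)\subset I$. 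Hence it suffices to prove that the $\VV$-- and $\HH$--invariant ideals are precisely those of the form $\FF_{\Lambda(V)}$ with $V\subset E^0$ hereditary and saturated; composing with the ideal/open--set isomorphism then yields the stated correspondence $V\mapsto\widehat{\FF_{\Lambda(V)}}$.

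The key step is to show that an invariant ideal $I$ has a \emph{full column} diagram. Each block $\FF_N(v)$ is a finite factor, so $I$ either contains it entirely or meets it trivially; write $(N,v)\in D(I)$ in the first case. By \eqref{formulas for shifts on diagrams} one has $\HH(\FF_{N+1}(v))=\FF_N(v)$, so $\HH(I)\subset I$ propagates membership downward along the column $\{\FF_N(v):N\}$ of a fixed vertex $v$; dually $\VV(\FF_N(v))=\VV(1)\FF_{N+1}(v)\VV(1)$, so $\VV(I)\subset I$ propagates membership upward \emph{provided} this corner is nonzero whenever $\FF_{N+1}(v)\neq 0$. For that I would compute directly that $\VV(1)s_e=\tfrac{1}{n_{r(e)}}\sum_{g\in r^{-1}(r(e))}s_g$, whence for any $\eta=e\eta'\in E^{N+1}$ with $r(\eta)=v$ the element $\VV(1)s_\eta=\tfrac{1}{n_{r(e)}}\sum_{g}s_{g\eta'}$ is nonzero; then $\VV(1)s_\eta s_\eta^*\VV(1)$ is a nonzero element of $\FF_{N+1}(v)$ lying in $\VV(\FF_N(v))\subset I$, and a nonzero element of a factor forces the whole factor into $I$. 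Consequently $D(I)$ is a union of full columns, i.e. $D(I)=\Lambda\bigl(V(I)\bigr)$ with $V(I):=\{v\in E^0:\FF_0(v)\subset I\}$.

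It then remains to match the two notions of hereditary/saturated. Since $\Lambda(E)$ is the vertical concatenation of $E$ --- an edge joins $(N,v)$ to $(N+1,w)$ exactly when $E$ has an edge $v\to w$ --- a full--column subdiagram $\Lambda(V)$ is hereditary (resp.\ saturated) in $\Lambda(E)$ if and only if $V$ is hereditary (resp.\ saturated) in $E^0$. Thus $D(I)=\Lambda(V(I))$, being the diagram of an ideal, is hereditary saturated, forcing $V(I)$ hereditary and saturated in $E^0$. For the converse I would take $V$ hereditary saturated, recall that $\FF_{\Lambda(V)}$ is already an ideal (subsection \ref{graph core subsection}), and verify invariance from the shift formulas: for $v\in V$ one has $\VV(\FF_N(v))\subset\FF_{N+1}(v)\subset\FF_{\Lambda(V)}$, $\HH(\FF_{N+1}(v))=\FF_N(v)\subset\FF_{\Lambda(V)}$, and $\HH(\FF_0(v))\subset\sum_{w\in r(s^{-1}(v))}\FF_0(w)\subset\FF_{\Lambda(V)}$, the last inclusion using $r(s^{-1}(v))\subset V$ from hereditariness. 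The assignments $V\mapsto\FF_{\Lambda(V)}$ and $I\mapsto V(I)$ are visibly mutually inverse, giving the desired bijection.

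Finally, for minimality I would use that, since $\widehat{\VV}$ is a partial homeomorphism, the complement of an invariant set is again invariant; hence $\widehat{\VV}$ has no nontrivial closed invariant set if and only if it has no nontrivial open invariant set, which by the bijection just established happens if and only if $E^0$ has no nontrivial hereditary saturated subset. The main obstacle I anticipate is the full--column step, and specifically the bookkeeping required to run it uniformly in the presence of sources (whose columns consist of the single block $\FF_0(v)$) and sinks (whose blocks survive as the infinite tails attached to $\Lambda(E)$); one must confirm that the corner--nonvanishing computation and the $E$/$\Lambda(E)$ translation of hereditary and saturated are unaffected by these two degeneracies.
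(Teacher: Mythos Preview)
Your proposal is correct and follows essentially the same route as the paper: invoke Lemma \ref{invariant ideals for endomorophisms}(iii) to reduce $\widehat{\VV}$--invariance of $\widehat I$ to the condition $\VV(I)\subset I$ and $\HH(I)\subset I$, and then use the level--shift formulas \eqref{formulas for shifts on diagrams} to see that this forces the Bratteli subdiagram of $I$ to be a union of full vertex--columns, hence of the form $\Lambda(V)$ for a hereditary saturated $V\subset E^0$. The paper's proof is extremely terse (``one readily sees''), and what you have written is precisely the content one needs to unpack in order to justify that phrase; your corner--nonvanishing computation and the observation that a nonzero intersection with the simple summand $\FF_{N+1}(v)$ of $\FF_{N+1}$ forces $\FF_{N+1}(v)\subset I$ are the right ingredients, and your anticipated bookkeeping for sources and sinks causes no genuine difficulty.
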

\begin{proof} Recall that for a hereditary and saturated subset $V$ of $E^0$ we treat $\Lambda(V)$ as a subdiagram of $\Lambda(E)$ where on the $n$-th level we have  $(r(E^n)\cap V) \cup \bigcup_{k=0}^{N-1}\{v^{(k)}: v \in r(E^{k}_{sinks})\cap V\}$. Now, using  condition iii)  of  Lemma \ref{invariant ideals for endomorophisms} and relations \eqref{formulas for shifts on diagrams} one readily see that the open set   $\widehat{I}$ for an ideal $I$  in $\FF_E$ is $\widehat{\VV}$-invariant if and only if the corresponding  Bratteli diagram for $I$ is of the form $\Lambda(V)$ where $V\subset E^0$ is  hereditary and saturated.
\end{proof}

Combining the above results we  not only characterize  freeness and topological freeness of $(\widehat{\FF_E},\widehat{\VV})$ but also spot out an interesting  dichotomy concerning its core subsystem $(\widehat{G}_\infty, \widehat{\VV})$, cf. Remark \ref{pure infiniteness remark} below. 
\begin{thm}\label{dynamical dychotomy}
Let $(\widehat{\FF_E},\widehat{\VV})$ be a partial homeomorphism dual to the graph interaction $(\VV,\HH)$. We have the following  dynamical dichotomy: 
\begin{itemize}
\item[a)] either every  open set intersecting  $\widehat{G}_\infty$ contains infinitely many  nonperiodic points  for $\widehat{\VV}$; this holds if every loop in  $E$ has an exit,  or
\item[b)] there are $\widehat{\VV}$-periodic orbits $O=\{\pi_{\mu^\infty},\pi_{\sigma_E(\mu^\infty)}...,\pi_{\sigma_E^{n-1}(\mu^\infty)}\}$  in $\widehat{G}_\infty$ forming  open discrete sets in  $\widehat{\FF_E}$; they correspond to loops  without exits $\mu$.
\end{itemize}
In particular,  
\begin{itemize}
\item[I)] $\widehat{\VV}$ is topologically free if and only if every loop in $E$ has an exit (satisfies condition (L)),
\item[II)] $\widehat{\VV}$ is free if and only if every loop has an exit connected to this loop (satisfies the so-called condition (K) introduced in \cite{kum-pask-rae-ren}, see also \cite{bprs}).
\end{itemize}
\end{thm}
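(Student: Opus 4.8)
The plan is to derive the dichotomy and then its two corollaries entirely from the three structural facts already in hand: Theorem~\ref{shifts of path representations} (which identifies $\widehat{\FF_E}$ with $\Omega_E/\sim$, makes $\widehat{G}_\infty$ closed and each $\widehat{G}_N$ open and discrete, and computes $\widehat{\VV}$), Proposition~\ref{every loop  has an exit}, and Proposition~\ref{no exit no freedom}; the freeness statement II) additionally needs Proposition~\ref{invariant Bratteli diagrams} together with one combinatorial lemma.

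First I would settle the dichotomy. Alternative a) is precisely Proposition~\ref{every loop  has an exit}. For alternative b), suppose some loop $\mu$ of length $n$ has no exit. Then Proposition~\ref{no exit no freedom} makes $\{\pi_{\mu^\infty}\}$ open, and since every cyclic rotation of an exitless loop is again an exitless loop, each $\{\pi_{\sigma_E^k(\mu^\infty)}\}$ is open as well; as $\sigma_E^n(\mu^\infty)=\mu^\infty$, the finite set $O=\{\pi_{\sigma_E^k(\mu^\infty)}:0\le k<n\}$ is an open discrete $\widehat{\VV}$-periodic orbit inside $\widehat{G}_\infty$. Since a finite graph either has all loops with exits or contains an exitless loop, exactly one of a), b) occurs.

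To obtain I) I would first observe that every $\widehat{\VV}$-periodic point lies in $\widehat{G}_\infty$, because a finite-length path strictly shortens under $\sigma_E$ and eventually leaves the domain, hence is never periodic. Thus for each $n$ the interior of the fixed-point set of $\widehat{\VV}^n$ is an open subset contained in $\widehat{G}_\infty$; under condition (L), alternative a) forces this interior to be empty, so $\widehat{\VV}$ is topologically free. The reverse implication is the contrapositive: an exitless loop $\mu$ of length $n$ produces, via b), the open singleton $\{\pi_{\mu^\infty}\}$ of fixed points of $\widehat{\VV}^n$, so that fixed-point set has nonempty interior and $\widehat{\VV}$ is not topologically free.

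For II) I would use that freeness means topological freeness on every closed invariant set. Since the complement of an invariant set is invariant, Proposition~\ref{invariant Bratteli diagrams} shows these closed invariant sets are exactly the spectra of the quotients $\FF_E/\FF_{\Lambda(V)}\cong\FF_{E\setminus V}$ for hereditary saturated $V\subset E^0$; by Lemma~\ref{invariant ideals for endomorophisms} the maps $\VV,\HH$ preserve $\FF_{\Lambda(V)}$, so they descend to the graph interaction of the subgraph $E\setminus V$, whose dual partial homeomorphism is the restriction of $\widehat{\VV}$. Applying I) to each $E\setminus V$, freeness of $\widehat{\VV}$ becomes equivalent to every subgraph $E\setminus V$ satisfying condition (L). The main step is then the combinatorial equivalence of this with condition (K) for $E$: if some $E\setminus V$ carries an exitless loop $\mu$, then in $E$ all exits of $\mu$ have range in the hereditary set $V$, so none returns to $\mu$ and (K) fails; conversely, if a loop $\mu$ of $E$ has no returning exit, then $V=\{w\in E^0:w\text{ cannot reach a vertex of }\mu\}$ is hereditary and saturated, avoids the vertices of $\mu$, and contains the range of every exit of $\mu$, so $\mu$ becomes exitless in $E\setminus V$ and (L) fails there. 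I expect this equivalence, and the verification that the interaction really descends to $E\setminus V$ with the predicted dynamics, to be the genuine work; everything else is bookkeeping on top of the quoted propositions.
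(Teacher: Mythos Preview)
Your proposal is correct and follows essentially the same route as the paper: the dichotomy and item I) are read off Propositions~\ref{every loop  has an exit} and~\ref{no exit no freedom}, while II) goes through Proposition~\ref{invariant Bratteli diagrams} to identify closed invariant sets with $\widehat{\FF_{E\setminus V}}$ and then reduces to condition (L) on each $E\setminus V$. The paper's proof is terser---it states the dichotomy and I) as immediate and, for the combinatorial equivalence between ``every $E\setminus V$ satisfies (L)'' and condition (K), simply refers to \cite[p.~318]{bprs}---whereas you spell out the argument for I) (periodic points must sit in $\widehat{G}_\infty$, so alternative a) kills any open set of periodic points) and give the combinatorial proof of the (K)-equivalence explicitly; your identification of the restricted dynamics on $\widehat{\FF_{E\setminus V}}$ with the dual of the $(E\setminus V)$-interaction is the one step the paper leaves implicit, and your flagging it as needing verification is appropriate.
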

\begin{proof}
In view of Propositions \ref{every loop  has an exit}, \ref{no exit no freedom} only  item II) requires a comment. By Proposition \ref{invariant Bratteli diagrams} every closed $\widehat{\VV}$-invariant set is of the form $\widehat{\FF_E}\setminus \widehat{\FF_V}=\widehat{\FF_{E\setminus V}}$  for a hereditary and saturated subset $V\subset E^0$. Hence   $\widehat{\VV}$ is free if and only if every loop outside a hereditary saturated set $V$ has an  exit outside $V$. The latter condition  is clearly equivalent to the condition that every loop has an exit connected to this loop,   cf. \cite[page 318]{bprs}. 
\end{proof}
\begin{rem}\label{pure infiniteness remark}
Since  $E$ is finite, by \cite[Theorem 3.9]{kum-pask-rae},   $C^*(E)$ is purely infinite in the sens of \cite{Lac-Spiel}, \cite{kum-pask-rae} if and only if $E$ has no sinks and every loop in $E$ has an exit.  In view of Proposition \ref{every loop  has an exit} we conclude  that  $C^*(E)$ is purely infinite if and only if every nonempty open set in  $\widehat{\FF_E}$ contains uncountable number of  nonperiodic points  for $\widehat{\VV}$. In particular, every $\widehat{\VV}$-periodic orbit $O=\{\pi_{\mu^\infty},\pi_{\sigma_E(\mu^\infty)}...,\pi_{\sigma_E^{n-1}(\mu^\infty)}\}$ yields a  gauge invariant ideal $J_O$ in $C^*(E)$ (generated by  $\bigcap_{\pi \in \widehat{\FF_E}\setminus O} \ker\pi$) which is not purely infinite.  Indeed, if $v=s(\mu)$ is the source of a loop $\mu$ witch has no exit, then 
$
p_v C^*(E)p_v=p_v J_O p_v = C^*(s_\mu)\cong C(\T)
$
because $s_\mu$ is a unitary in $C^*(s_\mu)$ with the full spectrum, cf. \cite[proof of Theorem 2.4]{kum-pask-rae}.
\end{rem}

Concluding,  we deduce from our   general results for corner interactions  the following fundamental classic results  for graph algebras, cf. \cite{Raeburn}, \cite{kum-pask-rae}, \cite{kum-pask-rae-ren},  \cite{bprs}.
\begin{cor}\label{graphs results}
 Consider the graph $C^*$-algebra $C^*(E)$ of the finite directed graph $E$.
\begin{itemize}
\item[i)] If every loop in $E$ has an exit, then any  Cuntz-Krieger $E$-family $\{P_v: v\in E^0\}$, $\{S_{e}:e\in E^1\}$ generates a $C^*$-algebra isomorphic to $C^*(E)$, via $s_e\mapsto S_e$, $p_v\mapsto P_v$, $e\in E^1$, $v\in E^0$. 
\item[ii)] If every loop in $E$ has an exit connected to this loop, then there is a lattice isomorphism between hereditary saturated subsets of $E^0$ and ideals in $C^*(E)$, given by $V \mapsto J_V$, where $J_V$ is an ideal generated by $p_v$, $v\in V$.
 \item[iii)] If  every loop in $E$ has an exit and $E$ has no nontrivial hereditary saturated sets, then $C^*(E)$ is simple.
\end{itemize}
\end{cor}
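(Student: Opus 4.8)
The plan is to obtain all three statements mechanically from the general theory, using two bridges established earlier: the gauge-invariant isomorphism $C^*(E)\cong C^*(\FF_E,\VV,\HH)$ of Proposition \ref{thm for Huef and Raeburn}, which lets us replace $C^*(E)$ by a Hilbert-bimodule crossed product governed by the partial homeomorphism $\widehat{\VV}$, and the dynamical translation of graph conditions supplied by Theorem \ref{dynamical dychotomy} and Proposition \ref{invariant Bratteli diagrams}. The overarching idea is that each hypothesis on loops or on hereditary saturated sets converts, via these two results, into exactly the hypothesis on $\widehat{\VV}$ demanded by Theorem \ref{interactions}.

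For i), I would first note that condition (L) --- every loop has an exit --- is equivalent to topological freeness of $\widehat{\VV}$ by part I) of Theorem \ref{dynamical dychotomy}. Given a Cuntz-Krieger $E$-family $\{P_v,S_e\}$ with all $P_v\neq 0$, the universal property produces a surjection $s_e\mapsto S_e$, $p_v\mapsto P_v$ whose restriction to $\FF_E$ is faithful, by the standard fact recalled in the proof of Proposition \ref{thm for Huef and Raeburn}. Through that same proposition this representation corresponds to a covariant representation of $(\VV,\HH)$ faithful on $\FF_E$, so Theorem \ref{interactions} i) upgrades it to a faithful representation of $C^*(\FF_E,\VV,\HH)\cong C^*(E)$; the surjection is then the asserted isomorphism.

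For iii), condition (L) again yields topological freeness of $\widehat{\VV}$, while the absence of nontrivial hereditary saturated subsets of $E^0$ yields minimality of $\widehat{\VV}$ by Proposition \ref{invariant Bratteli diagrams}. Theorem \ref{interactions} iii) then gives simplicity of $C^*(E)$ immediately, with no further work.

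Item ii) is where the main obstacle lies, and it is a bookkeeping matter rather than a conceptual one. Condition (K) makes $\widehat{\VV}$ free by part II) of Theorem \ref{dynamical dychotomy}, so Theorem \ref{interactions} ii) furnishes a lattice isomorphism $J\mapsto\widehat{J\cap\FF_E}$ between ideals of $C^*(E)$ and open $\widehat{\VV}$-invariant subsets of $\widehat{\FF_E}$, while Proposition \ref{invariant Bratteli diagrams} identifies the latter lattice with that of hereditary saturated sets $V\subset E^0$ via $V\mapsto\widehat{\FF_{\Lambda(V)}}$. Composing gives a lattice isomorphism between hereditary saturated sets and ideals, and the hard part is to check that the ideal attached to $V$ is precisely $J_V$, the ideal generated by $\{p_v:v\in V\}$. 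For this I would verify directly that $J_V\cap\FF_E=\FF_{\Lambda(V)}$: the inclusion $\FF_{\Lambda(V)}\subseteq J_V$ follows since any generator $s_\mu s_\nu^*$ with $r(\mu)=r(\nu)=v\in V$ factors as $s_\mu p_v s_\nu^*\in J_V$, and the reverse inclusion uses $p_v\in\FF_{\Lambda(V)}$ for $v\in V$ together with the invariance of $\FF_{\Lambda(V)}$ under both $\VV$ and $\HH$ (Lemma \ref{invariant ideals for endomorophisms}) and the hereditary saturatedness of $V$. Since $J\mapsto J\cap\FF_E$ is injective (being the composite of the two bijections above with the injective assignment of open sets to ideals of $\FF_E$), the equality $J_V\cap\FF_E=\FF_{\Lambda(V)}$ forces $J_V$ to be exactly the image of $V$, completing ii).
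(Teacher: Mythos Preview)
Your proposal is correct and follows exactly the route the paper intends: the paper's own proof is the single line ``Apply Propositions \ref{thm for Huef and Raeburn}, \ref{invariant Bratteli diagrams} and Theorems \ref{interactions}, \ref{dynamical dychotomy},'' and you have simply unpacked what that line means. The only place worth tightening is the reverse inclusion in ii): rather than invoking invariance of $\FF_{\Lambda(V)}$ vaguely, note that the ideal $J$ of $C^*(E)$ assigned to $V$ by the composite bijection satisfies $J\cap\FF_E=\FF_{\Lambda(V)}\ni p_v$ for each $v\in V$, whence $J_V\subseteq J$ and so $J_V\cap\FF_E\subseteq\FF_{\Lambda(V)}$; together with your forward inclusion and injectivity of $J\mapsto J\cap\FF_E$ this gives $J_V=J$.
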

\begin{proof} Apply  Propositions \ref{thm for Huef and Raeburn},  \ref{invariant Bratteli diagrams} and Theorems  \ref{interactions}, \ref{dynamical dychotomy}.
\end{proof}

\subsection{$K$-theory}
We now turn to description of $K$-groups for $C^*(E)$. As  $K_1$ groups for AF-algebras are trivial,  using Pimsner-Voiculescu sequence from Theorem \ref{Voicu-Pimsner for interacts} applied to the graph interaction $(\VV,\HH)$ associated to $E$  we have
 $$
  K_1(C^*(E) )\cong \ker(\iota_*- \HH_*),$$
  $$
    K_0(C^*(E) )\cong \coker(\iota_*- \HH_*)= K_0(\FF_E)/\im(\iota_*- \HH_*)   
  $$
  where $(\iota_*- \HH_*): \K_0(\VV(\FF_E))\to K_0(\FF_E)$. Hence to calculate the $K$-groups for $C^*(E)$ we need to identify $\ker(\iota_*- \HH_*)$ and $\coker(\iota_*- \HH_*)$. We do it in two steps.
 \begin{prop}[$K_0$-partial automorphism induced by a graph interaction]
 \label{label for K_0 automorphism}
   The group $K_0(\FF_E)$ is the universal abelian group $\langle V \rangle$ generated by  the set $V:=\{v^{(N)} : v\in r(E^N),\,\,  N\in \N\}$  of `endings of finite paths', subject to relations 
\begin{equation}\label{defining relations}
      v^{(N)}=\sum_{s(e)=v} r(e)^{(N+1)}\quad  \textrm{ for all } v \in r(E^N)\setminus E^0_{sinks}.
\end{equation}
In particular, the subgroup generated by  $v^{(N)}\in V$, $v\in E^0_{sinks}$, $N\in \N$, in $K_0(\FF_E)$ is  free abelian.
The groups  $K_0(\VV(A))$ and  $K_0(\HH(A))$ embeds into  subgroups of $K_0(\FF_E)$ and we have  
\begin{align*}
    K_0(\VV(\FF_E))&=\langle V\setminus \{v^{(0)}: v \in E_{sinks}^0\}\rangle,
   \\
    K_0(\HH(\FF_E))&=\langle \{ v^{(N)}\in V: v^{(N+1)} \in V\}\rangle.
\end{align*}
The isomorphism $\HH_*:K_0(\VV(A))\to K_0(\HH(A))$ is  determined by
\begin{equation}\label{K mapping H}
  \HH_*(v^{(N+1)})=v^{(N)},\qquad  \qquad N\in \N.
\end{equation}
   \end{prop}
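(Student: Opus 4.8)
The plan is to extract everything from the AF structure of $\FF_E$ together with the continuity of $K_0$. By \eqref{inductive limit}, $\FF_E=\overline{\bigcup_N\FF_N}$ with each $\FF_N$ finite dimensional, so $K_0(\FF_E)=\varinjlim_N K_0(\FF_N)$, and $K_0(\FF_N)$ is free abelian on the classes of minimal projections of its matrix blocks. I identify $v^{(N)}$ with the class $[s_\mu s_\mu^*]$ of a minimal projection in the block $\FF_N(v)$, where $\mu\in E^N$, $r(\mu)=v$. The first step is to read off the connecting homomorphism $K_0(\FF_N)\to K_0(\FF_{N+1})$: applying the Cuntz--Krieger relation \eqref{C-K2} gives $s_\mu s_\mu^*=\sum_{e\in s^{-1}(v)}s_{\mu e}s_{\mu e}^*$ for a non-sink $v$, each $s_{\mu e}s_{\mu e}^*$ being minimal in $\FF_{N+1}(r(e))$, so the connecting map sends $v^{(N)}\mapsto\sum_{s(e)=v}r(e)^{(N+1)}$, while a sink tail generator is sent to itself with multiplicity one. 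Feeding this into the standard presentation of a direct limit of free abelian groups (generators are all block classes at all levels, relations identify each generator with its image under the connecting map) produces exactly $\langle V\rangle$ subject to \eqref{defining relations}, the trivial tail relations being discarded.

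For the freeness of the sink part I would use the ideal $I_{sinks}=\bigoplus_N G_N$ from the paragraph preceding Theorem \ref{shifts of path representations}. Since each $w$ is a sink, the blocks $\FF_N(w)$ are mutually orthogonal over all pairs $(N,w)$, so $I_{sinks}$ is an orthogonal direct sum of matrix algebras and $K_0(I_{sinks})=\bigoplus\Z$ is free abelian on $\{w^{(N)}:w\in E^0_{sinks}\}$. Because $\FF_E$ is AF we have $K_1(\FF_E/I_{sinks})=0$, so the standard six-term sequence collapses to $0\to K_0(I_{sinks})\to K_0(\FF_E)\to K_0(\FF_E/I_{sinks})\to0$; the inclusion-induced map carries $w^{(N)}$ to $w^{(N)}$, so its image is precisely the subgroup generated by the sink classes, which is therefore free abelian.

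Next I locate the two corners. As $\VV(1)\in\FF_1$ and $\HH(1)=\sum_{v\in r(E^1)}p_v\in\FF_0$, both $\VV(\FF_E)=\VV(1)\FF_E\VV(1)$ and $\HH(\FF_E)=\HH(1)\FF_E\HH(1)$ are inductive limits of the finite corners $\VV(1)\FF_N\VV(1)$ and $\HH(1)\FF_N\HH(1)$. A nonzero corner of a matrix block is full, hence induces an isomorphism on $K_0$ carrying its generator to $v^{(N)}$; assembling these coordinate inclusions and using exactness of filtered colimits shows that $K_0(\VV(\FF_E))$ and $K_0(\HH(\FF_E))$ inject into $K_0(\FF_E)$ as the subgroups generated by those $v^{(N)}$ whose corner block is nonzero. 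A direct computation with the explicit form of $s$ shows $\VV(1)s_\xi s_\xi^*\VV(1)\neq0$ for every $\xi\in E^N$ with $N\geq1$, while $\VV(1)p_v\VV(1)\neq0$ iff $v\notin E^0_{sinks}$; this yields the generating set $\{v^{(N)}\in V:N\geq1\}\cup\{v^{(0)}:v\notin E^0_{sinks}\}=V\setminus\{v^{(0)}:v\in E^0_{sinks}\}$. Similarly $\HH(1)s_\mu s_\nu^*\HH(1)=s_\mu s_\nu^*$ exactly when $s(\mu),s(\nu)\notin E^0_{sources}$, so $\HH(1)\FF_N(v)\HH(1)\neq0$ iff $v\in r(E^{N+1})$, i.e. iff $v^{(N+1)}\in V$, giving $\langle\{v^{(N)}:v^{(N+1)}\in V\}\rangle$.

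Finally, for \eqref{K mapping H} I use that $\HH\colon\VV(\FF_E)\to\HH(\FF_E)$ is a $*$-isomorphism. By \eqref{formulas for shifts on diagrams} it restricts on the block $\VV(1)\FF_{N+1}(v)\VV(1)=\VV(\FF_N(v))$, whose class is $v^{(N+1)}$, to an isomorphism onto $\HH\VV(\FF_N(v))=\E_{\HH}(\FF_N(v))=\HH(1)\FF_N(v)\HH(1)$, whose class is $v^{(N)}$; thus a minimal projection goes to a minimal projection and $\HH_*(v^{(N+1)})=v^{(N)}$, the scaling constants in \eqref{endomorphism action on F_A}, \eqref{transfer action on F_A} being invisible to $K_0$. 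Since the redundant non-sink classes $v^{(0)}=\sum_{s(e)=v}r(e)^{(1)}$ already lie in $\langle\{v^{(N+1)}:N\in\N\}\rangle$ by \eqref{defining relations}, the family $\{v^{(N+1)}\}$ generates $K_0(\VV(\FF_E))$ and \eqref{K mapping H} indeed determines $\HH_*$. I expect the main difficulty to be bookkeeping rather than conceptual: keeping the level shift induced by $\VV$ and $\HH$ consistent with the indexing of $V$, and correctly accounting for the two exceptional phenomena, namely sinks, which break the chain of relations and produce the free summand, and sources, which make the corner blocks $\VV(1)p_v\VV(1)$ and $\HH(1)\FF_N(v)\HH(1)$ vanish.
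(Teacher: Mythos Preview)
Your argument is correct and follows essentially the same route as the paper: identify $v^{(N)}$ with $[s_\mu s_\mu^*]$, compute $K_0(\FF_E)$ as the direct limit of the $K_0(\FF_N)$ with the bonding maps read off from the Cuntz--Krieger relation, realise the two corners $\VV(\FF_E)$, $\HH(\FF_E)$ as inductive limits of finite corners whose $K_0$ injects levelwise, and obtain \eqref{K mapping H} from the block-by-block action in \eqref{formulas for shifts on diagrams} (equivalently \eqref{transfer action on F_A}). The only substantive difference is that for the freeness of the sink subgroup you invoke the six-term sequence for $I_{sinks}\lhd\FF_E$ and the vanishing of $K_1$ for AF quotients, whereas the paper simply records it as an immediate consequence of the direct-limit presentation (the sink generators are fixed by the bonding maps and never enter a relation); your argument is a clean alternative that makes the injectivity explicit.
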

   \begin{proof} We identify $v^{(N)}$ with the $K_0$-group element $[s_\mu s_\mu^*]$ where $\mu\in E^N$ and $v=r(\mu)$.
    It follows from  \eqref{inductive limit} that the group $K_0(\FF_E)$ is the inductive limit $\lim\limits_{\longrightarrow}(K_0(\FF_N),i_E^N)$ where 
   $$
  K_0(\FF_N)\cong \bigoplus_{v\in r(E^N)\setminus E^0_{sinks}} \Z v^{(N)} \oplus  \bigoplus_{k=0,...,N}\,\, \bigoplus_{ v\in r(E^k)\cap E^0_{sinks}} \Z v^{(k)}.
  $$ 
   Under the above isomorphisms, the bonding maps $i_E^N: K_0(\FF_N)\to  K_0(\FF_{N+1})$, $N\in \N$,  are given on generators by the formula
   $$
   i_E^N(v^{(N)})=\begin{cases} \sum_{s(e)=v} r(e)^{(N+1)}, & v \notin  E^0_{sinks} \\
   
   v^{(N)},  & v \in  E^0_{sinks}, 
      \end{cases}\qquad v \in r(E^N).
   $$
This immediately implies   the first part of the assertion.
\\
Since $\HH(\FF_E)=\HH(1)\FF_E\HH(1)$ is the closure of $\bigcup_{N\in \N} \HH(1)\FF_N\HH(1)$ and $K_0(\HH(1)\FF_N\HH(1))$ embeds into  $K_0(\FF_N)$ we see  by continuity pf $K_0$ that $K_0(\HH(\FF_E))$ embeds into $K_0(\FF_E)=\langle V \rangle$. Moreover, as  $\HH(1)=\sum_{v \in r(E^1)}p_v$ we get
   $$
   \HH(1)\FF_N(v)\HH(1)\neq \{0\} \,\, \Longleftrightarrow\,\, \FF_{N+1}(v)\neq \{0\} \,\, \Longleftrightarrow\,\, v^{(N+1)}\in V,
   $$
whence $K_0(\HH(\FF_E))$ identifies with $\langle \{ v^{(N)}\in V: v^{(N+1)} \in V\}\rangle$.
\\
   Similarly, since  $\VV(1)= \sum\limits_{v\in r(E^1)}\frac{1}{n_{v}} \sum\limits_{e,f\in r^{-1}(v)}\,\, s_es_f^*$ we infer that  $\VV(1)\FF_N(v)\VV(1)\neq \{0\}$ for all $v\in E^0$ and  $N>0$,  and $\VV(1)\FF_0(v)\VV(1)\neq \{0\}$ if and only if $v\notin E^0_{sinks}$. Thus we may identify $K_0(\VV(\FF_E))$ with $\langle V\setminus \{v^{(0)}: v \in E_{sinks}\}\rangle$. Now \eqref{K mapping H} follows from \eqref{transfer action on F_A}. Note that \eqref{K mapping H} determines $\HH_*$, as   for $v \in E^0\setminus E_{sinks}^0$, using only  \eqref{defining relations} and \eqref{K mapping H}  we have $\HH_*(v^{(0)})=\HH_*(\sum_{s(e)=v} r(e)^{(1)})=\sum_{s(e)=v} r(e)^{(0)}$.
   \end{proof}
%\begin{rem}  It is useful to keep in mind the direct limit picture of $K_0(\FF_E)$. Namely,  $K_0(\FF_E)$ is a direct limit of the following direct sequence$$ \begin{xy} \xymatrix@C=2pt@R=24pt{   K_0(\FF_0)  \ar[d]_{\iota_*} & \qquad \cong\qquad &    \Z V_0 \ar[d]_{B} \ar[drr]^{C} & \oplus & \Z W_0  \ar[drr]^{id}       \\      K_0(\FF_1) \ar[d]_{\iota_*} & \qquad \cong \qquad  &      \Z V_1 \ar[d]_{B} \ar[drr]^{C} & \oplus & \Z W_1 \ar[drr]^{id} & \oplus & \Z W_0 \ar[drr]^{id}      \\   K_0(\FF_2) \ar[d]_{\iota_*} & \qquad \cong\qquad  &       \Z V_2  \ar[d]_{B} \ar[drr]^{C}& \oplus & \Z W_2 \ar[drr]^{id} & \oplus & \Z W_1 \ar[drr]^{id}& \oplus & \Z W_0 \ar[drr]^{id}      \\    \ar@{.}[u] &  &     \ar@{.}[u]& &  \ar@{.}[u]& & \ar@{.}[u]& & \ar@{.}[u] & \qquad &\ar@{.}[ull]        }  \end{xy}$$ where $V_k:=r(E^k)\setminus E^0_{sinks}$, $W_k:=r(E^k)\cap E^0_{sinks}$, $k\in \N$, and  $B$ and $C$ are the matrices coming from the block form the adjacency matrix $$   A_E=\left(\begin{array}{c c} B^t & C^t \\    0  &  0  \end{array}\right), \qquad A_E(v,w)=#\{e\in E^1:s(e)=v, \,\, r(e)=w\},  $$  with respect to the decomposition $E^0=V_0\cup W_0$. Then $\HH_*$ is determined by the formula  $$  \HH_*\left(x^{(N)}_V \oplus x^{(N)}_W \oplus x^{(N-1)}_W \oplus ... \oplus x^{(1)}_W \oplus 0\right)=Bx^{(N)}_V \oplus Cx^{(N)}_V \oplus x^{(N)}_W \oplus x^{(N-1)}_W \oplus ... \oplus x^{(1)}_W  $$ where $x^{(N)}_V\in \Z V_N$ and $x^{(k)}_W\in \Z W_k$, $k=1,...,N$. \end{rem}

 We  let $\Z (E^0\setminus E_{sinks}^0)$ and $\Z E^0$ denote the free abelian groups on free generators $E^0\setminus E_{sinks}^0$ and $E^0$, respectively. We consider the group homomorphism $\Delta_E:  \Z (E^0\setminus E_{sinks}^0) \to \Z E^0$ defined on generators as
  $$
  \Delta_E(v)= v- \sum_{s(e)=v} r(e).
  $$
%In other words, using notation from remark ??,  $ \Z V_0 \ni a_V \longmapsto \Delta_E(a)=Ba\oplus Ca \in \Z V_0\oplus \Z W_0$.
 The following lemma can be viewed as a counterpart of Lemmas 3.3, 3.4 in \cite{Raeburn Szymanski}. Nevertheless,  it is a slightly different statement.
   \begin{lem}
    We have isomorphisms
    $$
    \ker(\iota_*- \HH_*)\cong \ker(\Delta_E), \qquad
            \coker(\iota_*- \HH_*)  \cong \coker(\Delta_E), 
    $$
which are given on generators by 
 \begin{align*}
   \ker \Delta_E \ni v  & \stackrel{i^{(0)}}{\longmapsto} v^{(0)} \in \ker(\iota_*- \HH_*)  , 
   \\
          \Z E^0/\im(\Delta_E)\ni  [v]   & \stackrel{j^{(0)}}{\longmapsto} [v^{(0)}] \in K_0(\FF_E)/\im(\iota_*- \HH_*) .
     \end{align*}
  \end{lem}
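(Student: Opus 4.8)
The plan is to deduce both isomorphisms from a single commutative square together with the inductive-limit description $K_0(\FF_E)=\lim\limits_{\longrightarrow}(K_0(\FF_N),i_E^N)$ from Proposition \ref{label for K_0 automorphism}. Write $\beta\colon\Z E^0\to K_0(\FF_E)$ for the homomorphism $v\mapsto v^{(0)}$, let $i^{(0)}\colon\Z(E^0\setminus E^0_{sinks})\to K_0(\VV(\FF_E))$ be $v\mapsto v^{(0)}$, and recall $\iota_*(v^{(N)})=v^{(N)}$, $\HH_*(v^{(N+1)})=v^{(N)}$ and $\HH_*(v^{(0)})=\sum_{s(e)=v}r(e)^{(0)}$ (the last for $v\in E^0\setminus E^0_{sinks}$). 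Evaluating on generators then gives
\[
(\iota_*-\HH_*)\circ i^{(0)}=\beta\circ\Delta_E\qquad\text{on }\Z(E^0\setminus E^0_{sinks}),
\]
since $(\iota_*-\HH_*)(v^{(0)})=v^{(0)}-\sum_{s(e)=v}r(e)^{(0)}=\beta(\Delta_E v)$. This identity shows at once that $i^{(0)}$ carries $\ker\Delta_E$ into $\ker(\iota_*-\HH_*)$ and that $j^{(0)}$ is well defined; the lemma then amounts to the assertion that $(i^{(0)},\beta)$, viewed as a morphism from the two-term complex $\Z(E^0\setminus E^0_{sinks})\xrightarrow{\Delta_E}\Z E^0$ to $K_0(\VV(\FF_E))\xrightarrow{\iota_*-\HH_*}K_0(\FF_E)$, induces isomorphisms on kernels and cokernels.

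For the cokernel I would argue by a level-collapse with an explicit inverse. In $K_0(\FF_E)/\im(\iota_*-\HH_*)$ the classes $v^{(N+1)}-v^{(N)}$ vanish, so every generator $v^{(N)}$ becomes equal to its level-$0$ representative $v^{(0)}$, while the surviving relations $v^{(0)}-\sum_{s(e)=v}r(e)^{(0)}=\beta(\Delta_E v)$ reproduce exactly $\im\Delta_E$. Conversely, the homomorphism $K_0(\FF_E)\to\coker\Delta_E$ determined on generators by $v^{(N)}\mapsto[v]$ is readily checked to kill both the defining relations \eqref{defining relations} (as $v^{(N)}-\sum_{s(e)=v}r(e)^{(N+1)}\mapsto[\Delta_E v]=0$) and $\im(\iota_*-\HH_*)$, and hence descends to a two-sided inverse of $j^{(0)}$. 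This gives $\coker(\iota_*-\HH_*)\cong\coker\Delta_E$.

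For the kernel I would use the canonical maps $j_N\colon K_0(\FF_N)\to K_0(\FF_E)$ and the standard fact that $j_N(z)=0$ iff $i_E^{N\to M}(z)=0$ for some $M\ge N$. Injectivity of $i^{(0)}$ on $\ker\Delta_E$ is the easiest point: the relation $\Delta_E\xi=0$ expresses $\xi$ as a fixed vector of the transpose adjacency action $A_E$, so a short computation shows $i_E^{0\to N}(i^{(0)}\xi)$ equals the level-$N$ copy $\xi^{(N)}$ of $\xi$ (the sink-tail contributions drop out because $\xi$ has no sink coordinate), whence $\beta(\xi)=0$ forces $\xi=0$. For surjectivity, an element $\eta\in\ker(\iota_*-\HH_*)$ satisfies $\HH_*(\eta)=\eta$, hence $\HH_*^{k}(\eta)=\eta$ for every $k$; applying the shift $N$ times to a level-$N$ representative of $\eta$ pushes it down to a level-$0$ element supported off the sinks, which is therefore $\beta(\xi)$ for some $\xi\in\Z(E^0\setminus E^0_{sinks})$, and a final correction of $\xi$ by the bonding maps (as in Lemmas 3.3, 3.4 of \cite{Raeburn Szymanski}) places it in $\ker\Delta_E$ without altering $\beta(\xi)$.

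The main obstacle throughout is the presence of sinks. The shift $\HH_*$ is an isomorphism only between the subgroups $K_0(\VV(\FF_E))$ and $K_0(\HH(\FF_E))$, and when $E$ has sinks these are proper in $K_0(\FF_E)$: the excluded level-$0$ sink generators $v^{(0)}$, $v\in E^0_{sinks}$, together with the frozen sink-tail generators, force one to reconcile the global inductive-limit shift with the only partially defined $\HH_*$. This is precisely why $\Delta_E$ is declared on $\Z(E^0\setminus E^0_{sinks})$ rather than on all of $\Z E^0$, and the careful tracking of the frozen tails in the surjectivity step is the one genuinely delicate point. In the sink-free case $\iota_*$ is the identity and $\HH_*$ is an automorphism of all of $K_0(\FF_E)$, so the argument collapses to the clean assertion that the kernel and cokernel of $1-\HH_*$ on the stationary-type limit coincide with those of $\Delta_E$.
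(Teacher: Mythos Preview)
Your overall strategy coincides with the paper's: you set up the same intertwining identity $(\iota_*-\HH_*)\circ i^{(0)}=\beta\circ\Delta_E$ and use the level-shift $\HH_*(v^{(N+1)})=v^{(N)}$ to reduce everything to level zero. For the kernel, your argument is essentially the paper's---it too writes a general element at level $N$, observes that $\HH_*(\eta)=\eta$ kills the sink-tail coefficients, and applies $\HH_*^N$ to land in the image of $i^{(0)}$.

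The one genuine difference is in the \emph{injectivity of $j^{(0)}$}. The paper takes $a\in\Z E^0$ with $\beta(a)=(\iota_*-\HH_*)(x)$, expands both sides at level $N$, compares coefficients term by term using the relations \eqref{defining relations}, and exhibits an explicit preimage $b\in\Z(E^0\setminus E^0_{sinks})$ with $\Delta_E b=a$; this is the longest and most computational part of the paper's proof. Your construction of the inverse $K_0(\FF_E)\to\coker\Delta_E$, $v^{(N)}\mapsto[v]$, sidesteps that computation entirely: well-definedness on the relations \eqref{defining relations} and on generators of $\im(\iota_*-\HH_*)$ is a one-line check, and the telescoping identity $v^{(N)}-v^{(0)}=\sum_{k=1}^N(\iota_*-\HH_*)(v^{(k)})$ verifies it is a two-sided inverse. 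This is a real simplification.

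Two small imprecisions to fix. In your kernel-injectivity step, the sink-tail terms vanish not merely because $\xi$ has no sink coordinate but because $\Delta_E\xi=0$ forces $\sum_{r(e)=w}\xi_{s(e)}=0$ at each sink $w$; the same condition at sources gives $\xi_v=0$ there, which you need when iterating the bonding maps. In your kernel-surjectivity step, the paper does not perform any ``final correction'': once $\eta=i^{(0)}(\xi)$ with $\xi$ supported off the sinks, it argues directly (from $\eta=\HH_*(\eta)$ and the intertwining relation) that $\xi\in\ker\Delta_E$; the deferral to \cite{Raeburn Szymanski} is unnecessary, and you should instead spell out why $\beta(\Delta_E\xi)=0$ forces $\Delta_E\xi=0$ in your setting.
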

  \begin{proof} Suppose $a=\sum_{v\in E^0\setminus E_{sinks}^0} a_v v\in \Z(E^0\setminus E^0_{sinks})$. Then by \eqref{defining relations}, \eqref{K mapping H} we have 
\begin{align*}
(\iota_*- \HH_*)(i^{(0)}(a))&=\sum_{v\in E^0\setminus E_{sinks}^0} a_v v^{(0)}-\sum_{v\in E^0\setminus E_{sinks}^0} a_v  \sum_{s(e)=v} r(e)^{(0)}
  \\
  &=i^{(0)}(\Delta_E(a)).
\end{align*}
  Accordingly, $a\in \ker \Delta_E$ implies  $i^{(0)}(a)\in \ker(\iota_*- \HH_*)$ and hence  $i^{(0)}$ is well defined. Clearly $i^{(0)}$ is injective.
 To show that it is  surjective note that
\begin{equation}\label{general form of an element}
x=\sum_{v\in r(E^N)\setminus E_{sinks}^0} x_v v^{(N)}+ \sum_{k=1,...,N }\,\,\sum_{ v\in r(E^k)\cap E^0_{sinks}}  x_v^{(k)} v^{(k)} 
\end{equation}
is a general form of an element in $K_0(\VV(A))$ and assume $x$ is in $\ker(\iota_*- \HH_*)$. The  relation $x=\HH_*(x)$ implies that  the coefficients  corresponding to sinks in the expansion \eqref{general form of an element} are  zero. Thus   
$x=\HH_*^n(x)=\sum_{v\in E^0\setminus E_{sinks}^0} x_v v^{(0)}= i^{(0)}(a)$ where $a:=\sum_{v\in E^0\setminus E_{sinks}^0} x_v v$ is in $\ker \Delta_E $ because  $i^{(0)}(a)=x=\HH_*(x)=\HH_*(i^{(0)}(a))=i^{(0)}(\Delta_E(a))$. Hence $i^{(0)}$ is an isomorphism.
\\
Since $i^{(0)}$ intertwines $\Delta_E$ and $(\iota_*- \HH_*)$ we see that  $j^{(0)}$ is well defined.
 To show that $j^{(0)}$ is surjective, let $y=x + \sum_{v\in E_{sinks}^0} x_v^{(0)} v^{(0)}$ where $x$ is given by \eqref{general form of an element} (this is a general form of an element in $K_0(\FF_E)$).
Observe that as $x-\HH_*(x)\in \im(\iota_*- \HH_*)$ the element $y$ has the same class in $\coker(\iota_*- \HH_*)$ as 
$$
\HH_*(x)+  \sum_{v\in E_{sinks}^0} x_v^{(0)} v^{(0)}= z + \sum_{k=0,1} \,\, \sum_{v\in r(E^k)\cap E^0_{sinks}}  x_v^{(k)} v^{(0)}\,\,\, 
$$
where $
z=\sum_{v\in r(E^N)\setminus E_{sinks}^0} x_v v^{(N-1)}+ \sum_{k=2,...,N }\,\,\sum_{ v\in r(E^k)\cap E^0_{sinks}}  x_v^{(k)} v^{(k-1)} 
$ is in $ K_0(\VV(A))$. Applying the above argument to $z$ and proceeding in this way $N$ times we get that $y$ is in the same class in $\coker(\iota_*- \HH_*)$ as
$$
\sum_{v\in r(E^N)\setminus E_{sinks}^0} x_v v^{(0)}+ \sum_{k=0,1,...,N} \,\, \sum_{v\in r(E^k)\cap E^0_{sinks}} x_v^{(k)} v^{(0)}.
$$
Hence 
$$y=j^{(0)}\left(\sum_{v\in r(E^N)\setminus E_{sinks}^0} x_v [v]+ \sum_{k=0,1,...,N} \,\, \sum_{v\in r(E^k)\cap E^0_{sinks}} x_v^{(k)} [v] \right) 
.$$
The proof of injectivity of $j^{(0)}$ is slightly more complicated.  Let us consider $a=\sum_{v\in E^0} a_v v \in \Z E^0$  such that $i^{0}(a)\in \im(\iota_*- \HH_*)$. Then 
$i^{0}(a)=x- \HH_*(x)$ for an element $x$ of the form \eqref{general form of an element}, and hence  
\begin{align*}
i^{0}(a)=&\sum_{v\in r(E^N)\setminus E_{sinks}^0} x_v \left(v^{(N)}- \sum_{s(e)=v} r(e)^{(N)}\right)
\\
&+ \sum_{k=1,...,N} \,\, \sum_{v\in r(E^k)\cap E^0_{sinks}}x_v^{(k)} \left(v^{(k)}- v^{(k-1)}\right).
%&=\sum_{v\in r(E^N)\setminus E_{sinks}^0} (x_v - \sum_{r(e)=v } x_{s(e)}) v^{(N)}+ \sum_{v\in r(E^N)\cap E_{sinks}^0} (x_v^{(N)} - \sum_{r(e)=v } x_{s(e)}) v^{(N)}\\&+ \sum_{k=1,...,N-1 \atop v\in r(E^k)\cap E^0_{sinks}} (x_v^{(k)} -x_v^{(k+1)}) v^{(k)}+ \sum_{v\in r(E^1)\cap E^0_{sinks}} -x_{v}^{(1)}v^{(0)} .
\end{align*}
On the other hand, applying  $N$-times relation \eqref{defining relations}  to $i^{0}(a)=\sum_{v\in E^0} a_v v^{(0)}$ we get 
$$
i^{0}(a)=\sum_{\mu\in E^N} a_{s(\mu)} v_{r(\mu)}^{(N)}+  \sum_{k=0,...,N} \,\, \sum_{\mu\in E^k_{sinks}} a_{s(\mu)} v_{r(\mu)}^{(k)}.
$$
Comparing  coefficients  in the above two formulas one can see that 
%$$ a_v= -x_{v}^{(1)},\quad v\in r(E^1)\cap E^0_{sinks}, \qquad  x_v^{(k)}-x_{v}^{(k+1)}= \sum_{\mu \in E^k, r(\mu)=v} a_{s(\mu)}, \qquad v\in r(E^k)\cap E^0_{sinks}, \, k=1,...,N-1. $$  $$ \sum_{\mu \in E^N, r(\mu)=v} a_{s(\mu) }=\begin{cases} x_v - \sum_{r(e)=v } x_{s(e)}, &  v\in r (E^N)\setminus  E_{sinks}^0 \\ x_v^{(N)} - \sum_{r(e)=v } x_{s(e)}, &  v\in r (E^N)\cap  E_{sinks}^0 \end{cases}. $$
\begin{equation}\label{pomocnicza relacja} 
a_v=\sum_{r(e)=v } x_{s(e)} + \sum_{k=1,..., N} \,\, \sum_{\mu \in E^k, r(\mu)=v} a_{s(\mu)}\qquad \textrm{ for }\,\, v\in E^0_{sinks}
\end{equation}
(in particular $a_v=0$ for $v\in  E^0_{sinks}\setminus r(E^1)$), and
\begin{equation}\label{pomocnicza relacja1}
\sum_{\mu \in E^N, r(\mu)=v} a_{s(\mu) }= x_v - \sum_{r(e)=v } x_{s(e)},  \qquad   \textrm{ for }\,\,v\in r (E^N)\setminus  E_{sinks}^0.
\end{equation}
We  define an element of $\Z (E^{0}\setminus E^{0}_{sinks})$ by
$$
b:=\sum_{v\in r(E^N)\setminus E_{sinks}^0} x_v v +\sum_{ k=0,..., N-1} \,\, \sum_{\mu \in E^k\setminus E^k_{sinks}} a_{s(\mu)}  r(\mu). 
$$
Using  \eqref{pomocnicza relacja} and \eqref{pomocnicza relacja1}, in the third equality below,  we obtain 
\begin{align*}
\Delta_E b
&=b- \sum_{v\in r(E^N)} \left(\sum_{r(e)=v } x_{s(e)}\right)v -\sum_{\mu \in E^k, k=1,..., N} a_{s(\mu)} r(\mu)
\\
&=\sum_{v\in r(E^N)\setminus E_{sinks}^0} \left(x_v - \sum_{r(e)=v } x_{s(e)} - \sum_{\mu \in E^N, r(\mu)=v} a_{s(\mu) }\right) v  
\\
& + \sum_{v\in E^0\setminus E^0_{sinks}} a_{v} -\sum_{v\in   E_{sinks}^0} \left(\sum_{r(e)=v } x_{s(e)} +\sum_{k=1,..., N}\,\, \sum_{\mu \in E^k, r(\mu)=v} a_{s(\mu)}\right) v
\\ 
&=0+ \sum_{v\in E^0\setminus E_{sinks}^0} a_{v} v + \sum_{v\in  E_{sinks}^0}  a_{v} v= a.
\end{align*}
\end{proof}
\begin{cor}[cf. Theorem 3.2 in \cite{Raeburn Szymanski}]
    We have isomorphisms
    $$
   K_0(C^*(E))\cong \ker(\Delta_E), \qquad
            K_1(C^*(E))  \cong \coker(\Delta_E). 
    $$
\end{cor}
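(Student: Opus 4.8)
The plan is to obtain both isomorphisms by feeding the structural information about $\FF_E$ into the two results established immediately above, so that the argument is pure homological bookkeeping with no new analytic input. The starting point is the Pimsner--Voiculescu sequence of Theorem~\ref{Voicu-Pimsner for interacts} applied to the graph interaction $(\VV,\HH)$ on $\FF_E$. The decisive observation is that $\FF_E$ is an AF-algebra, so $K_1(\FF_E)=0$, and that $\VV(\FF_E)=\VV(1)\FF_E\VV(1)$ is a corner in an AF-algebra (Proposition~\ref{characterization of complete interactions}) and hence is itself AF, so $K_1(\VV(\FF_E))=0$ as well. With these two groups vanishing, the six-term cyclic sequence of Theorem~\ref{Voicu-Pimsner for interacts} degenerates: the connecting homomorphisms force the two nontrivial extremities to identify the two $K$-groups of $C^*(E)$ with the kernel and the cokernel of the single homomorphism $\iota_*-\HH_*\colon K_0(\VV(\FF_E))\to K_0(\FF_E)$, exactly the reduction already recorded at the opening of this subsection.

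First I would make this degeneration precise. Because one connecting map issues from a $K$-group of $C^*(E)$ into the vanishing group $K_1(\VV(\FF_E))$, while the incoming map originates from $K_1(\FF_E)=0$, exactness in the collapsed diagram pins down each $K$-group of $C^*(E)$ as exactly one of $\ker(\iota_*-\HH_*)$ and $\coker(\iota_*-\HH_*)$, attached as in the assertion of the Corollary. This is the only place where the geometry of the diagram in Theorem~\ref{Voicu-Pimsner for interacts} enters.

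Next I would transport this description along the isomorphisms of the preceding Lemma, whose maps $i^{(0)}$ and $j^{(0)}$ are precisely the translation device needed: $i^{(0)}$ identifies $\ker(\Delta_E)$ with $\ker(\iota_*-\HH_*)$, and $j^{(0)}$ identifies $\coker(\Delta_E)=\Z E^0/\im(\Delta_E)$ with $\coker(\iota_*-\HH_*)=K_0(\FF_E)/\im(\iota_*-\HH_*)$. Composing the two layers of identifications yields the stated isomorphisms $K_0(C^*(E))\cong\ker(\Delta_E)$ and $K_1(C^*(E))\cong\coker(\Delta_E)$.

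The one step that genuinely requires care --- and the step I expect to be the main obstacle --- is the degree bookkeeping in the first paragraph: one must track the directions of both connecting maps in the cyclic diagram of Theorem~\ref{Voicu-Pimsner for interacts}, together with the vanishing of the two AF $K_1$-groups, in order to attach each $K$-group of $C^*(E)$ to the correct one of $\ker(\iota_*-\HH_*)$, $\coker(\iota_*-\HH_*)$. Everything after that is formal: the Lemma transports the kernel and the cokernel verbatim from $\iota_*-\HH_*$ to $\Delta_E$, and the comparison with \cite[Thm.~3.2]{Raeburn Szymanski} is then immediate.
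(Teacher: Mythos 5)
You follow the same route as the paper: specialize Theorem~\ref{Voicu-Pimsner for interacts} to the graph interaction, kill the two $K_1$-groups because $\FF_E$ and its corner $\VV(\FF_E)$ are AF, and transport kernel and cokernel of $\iota_*-\HH_*$ through the isomorphisms $i^{(0)}$ and $j^{(0)}$ of the preceding Lemma. But the one step you yourself flag as the main obstacle --- deciding which $K$-group of $C^*(E)$ gets attached to $\ker(\iota_*-\HH_*)$ and which to $\coker(\iota_*-\HH_*)$ --- is never actually carried out, and your two descriptions of its outcome contradict each other. In your first paragraph the degeneration is said to give ``exactly the reduction already recorded at the opening of this subsection,'' which reads $K_1(C^*(E))\cong\ker(\iota_*-\HH_*)$ and $K_0(C^*(E))\cong\coker(\iota_*-\HH_*)$; in your second paragraph the groups are said to be ``attached as in the assertion of the Corollary,'' which via the Lemma would mean the opposite pairing, $K_0(C^*(E))\cong\ker(\iota_*-\HH_*)$ and $K_1(C^*(E))\cong\coker(\iota_*-\HH_*)$. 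Both cannot hold.

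Doing the chase resolves the ambiguity, and not in favour of the printed statement. With $K_1(\FF_E)=K_1(\VV(\FF_E))=0$, exactness forces $(i_A)_*\colon K_0(\FF_E)\to K_0(C^*(E))$ to be surjective with kernel $\im(\iota_*-\HH_*)$, and forces the connecting map to embed $K_1(C^*(E))$ onto $\ker(\iota_*-\HH_*)$; hence
$$
K_0(C^*(E))\cong\coker(\iota_*-\HH_*)\cong\coker(\Delta_E),\qquad K_1(C^*(E))\cong\ker(\iota_*-\HH_*)\cong\ker(\Delta_E).
$$
This pairing is the correct one: for $\OO_n$ (one vertex, $n$ loops) one has $\coker(\Delta_E)=\Z/(n-1)\Z=K_0(\OO_n)$ and $\ker(\Delta_E)=0=K_1(\OO_n)$, in agreement with \cite[Thm.~3.2]{Raeburn Szymanski}. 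So the Corollary as displayed has $\ker$ and $\coker$ interchanged --- it contradicts the reduction at the head of the subsection, which the paper's own proof consists of combining with the Lemma. A complete proof must perform the diagram chase explicitly; had you done so, you would have detected this discrepancy and stated the conclusion with $\ker$ and $\coker$ in their correct places, instead of endorsing the misprinted pairing by assertion.
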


\end{document}